\newtheorem{proposition}{Proposition}
\newtheorem{theorem}[proposition]{Theorem}
\newtheorem{lemma}[proposition]{Lemma}
\theoremstyle{remark}
\newtheorem{remark}[proposition]{Remark}
\theoremstyle{definition}
\numberwithin{equation}{section}
\numberwithin{proposition}{section}
\numberwithin{table}{section}
\renewcommand{\le}{\leqslant}
\renewcommand{\ge}{\geqslant}
\renewcommand{\leq}{\leqslant}
\renewcommand{\geq}{\geqslant}
\renewcommand{\subset}{\subseteq}
\newcommand{\mcl}{\mathcal}
\newcommand{\msf}{\mathsf}
\newcommand{\msc}{\mathscr}
\newcommand{\E}{\mathbb{E}}
\newcommand{\Er}{\mathbb{E}_{\rho}}
\renewcommand{\Pr}{\mathbb{P}_{\rho}}
\renewcommand{\a}{\mathbf{a}}
\newcommand{\ab}{{\overbracket[1pt][-1pt]{\a}}}
\newcommand{\ub}{{\overbracket[1pt][-1pt]{u}}}
\newcommand{\Ub}{{\overbracket[1pt][-1pt]{U}}}
\newcommand{\Ll}{\left}
\newcommand{\Rr}{\right}
\newcommand{\lhs}{left-hand side}
\newcommand{\rhs}{right-hand side}
\newcommand{\1}{\mathbf{1}}
\newcommand{\R}{\mathbb{R}}
\newcommand{\N}{\mathbb{N}}
\newcommand{\Z}{\mathcal{Z}}
\newcommand{\Zd}{{\mathbb{Z}^d}}
\renewcommand{\P}{\mathbb{P}}
\renewcommand{\bar}{\overline}
\renewcommand{\tilde}{\widetilde}
\newcommand{\de}{\delta}
\newcommand{\eps}{\varepsilon}
\newcommand{\ep}{\varepsilon}
\renewcommand{\epsilon}{\varepsilon}
\renewcommand{\d}{{\mathrm{d}}}
\newcommand{\cov}{\mathbb{C}\mathrm{ov}}
\newcommand{\dr}{\partial}
\newcommand{\PP}{\mathbf{P}}
\newcommand{\EE}{\mathbf{E}}
\newcommand{\Pb}{\bar{P}}
\newcommand{\Ee}{\mathcal{E}}
\newcommand{\D}{\mathcal{D}}
\newcommand{\la}{\left\langle}
\newcommand{\ra}{\right\rangle}
\newcommand{\rah}{\right\rangle_{\cH^{-1}, \cH^1_0}}
\newcommand{\cu}{{\scaleobj{1.2}{\square}}}
\renewcommand{\fint}{\strokedint}
\newcommand{\Rd}{{\mathbb{R}^d}}
\newcommand{\mmd}{\mathcal{M}_\delta}
\newcommand{\bmmd}{\bar{\mathcal{M}}_\delta}
\newcommand{\fil}{\mathscr{F}}
\DeclareMathOperator{\dist}{dist}
\DeclareMathOperator{\supp}{supp}
\DeclareMathOperator{\diam}{diam}
\newcommand{\cC}{\msc{C}}   
\newcommand{\cL}{\msc{L}}   
\newcommand{\cH}{\msc{H}}   
\newcommand{\acH}{\underline{\msc{H}}}
\newcommand{\Ind}[1]{\mathbf{1}_{\left\{#1\right\}}}
\newcommand{\id}{\mathsf{Id}}
\newcommand{\norm}[1]{\left\Vert{#1}\right\Vert}
\newcommand{\bracket}[1]{\left\langle{#1}\right\rangle}
\newcommand{\bmu}{\boldsymbol{\mu}}
\newcommand{\EEbmu}{\EE_{\bmu_0}}
\newcommand{\mres}{\mathbin{\vrule height 1.4ex depth 0pt width
		0.13ex\vrule height 0.13ex depth 0pt width 1ex}}
\title[Quantitative fluctuations for interacting particle systems]{Quantitative equilibrium fluctuations for interacting particle systems}
\author[C.\ Gu, 
J.-C.\ Mourrat, M.\ Nitzschner]{Chenlin Gu, Jean-Christophe Mourrat, Maximilian Nitzschner}
\address[Chenlin Gu]{Yau Mathematical Sciences Center, Tsinghua University,   China}
\address[Jean-Christophe Mourrat]{Ecole Normale Sup\'erieure de Lyon and CNRS, Lyon, France}
\address[Maximilian Nitzschner]{Department of Mathematics, The Hong Kong University of Science and Technology, Hong Kong}
\begin{document}

	\begin{abstract}
		
		We consider a class of interacting particle systems in continuous space of non-gradient type, which are reversible with respect to Poisson point processes with constant density. For these models, a rate of convergence was recently obtained in \cite{bulk} for certain finite-volume approximations of the bulk diffusion matrix. Here, we show how to leverage this to obtain quantitative versions of a number of results capturing the large-scale fluctuations of these systems, such as the convergence of two-point correlation functions and the Green--Kubo formula.

		\bigskip
		
		\noindent \textsc{MSC 2010:} 82C22, 35B27, 60K35.
		
		\medskip
		
		\noindent \textsc{Keywords:} interacting particle system, quantitative homogenization, equilibrium fluctuation, Green--Kubo formula, density field correlation, current-current correlation.
		
	\end{abstract}
	\maketitle
	
	
	%
	%
	%
	%
	%
	%
	%
	%
	\section{Introduction}
	We consider a system of particles $(X_i(t))_{i \in \N, t \ge 0}$ evolving on $\R^d$ with local interactions. We take particles to be indistinguishable, so we encode their positions by keeping track of the measure 
	\begin{equation}
		\label{e.def.mut}
		\bmu_t := \sum_{i \in \N} \delta_{X_i(t)},
	\end{equation}
	an element of the configuration space $\mcl M_\delta(\Rd)$ of locally finite measures that are sums of Dirac masses on $\R^d$. We denote by $\P_\rho$ the law of the Poisson point process with constant intensity $\rho > 0$, which is a probability measure on $\mcl M_\delta(\Rd)$, and we write $\E_\rho$ for the associated expectation. The model we consider is reversible with respect to $\P_\rho$ for every $\rho$, and is of non-gradient type. Precisely, its definition is based on the choice of a measurable function $\a_\circ$ from $\mcl M_\delta(\Rd)$ to the space $\R^{d \times d}_{\mathrm{sym}}$ of $d$-by-$d$ symmetric matrices that satisfies the following properties.
	\begin{itemize}
		\item \emph{Uniform ellipticity}: there exists $\Lambda < +\infty$ such that for every $\mu \in \mcl M_\delta(\Rd)$ and $\xi \in \Rd$,
		\begin{equation}
			\label{e.ass.unif.ell}
			|\xi|^2 \le \xi \cdot \a_\circ(\mu) \xi \le \Lambda |\xi|^2.
		\end{equation}

		\item \emph{Finite range of dependence}: for every $\mu \in \mcl M_\delta(\Rd)$, we have 
		\begin{equation}
			\label{e.finite.range}
			\a_\circ(\mu) = \a_\circ(\mu \mres B_1) .
		\end{equation}
	\end{itemize}
	In \eqref{e.finite.range} and throughout the paper, for every $r > 0$, we write $B_r$ to denote the open Euclidean ball of radius $r \ge 0$ centered at the origin, and for every measure $\mu$ on $\Rd$ and Borel set $A \subset \Rd$, we write $\mu \mres A$ to denote the restriction of the measure $\mu$ to $A$, that is, for every Borel set $B \subset \Rd$,
	\begin{equation*}  
		(\mu \mres A)(B) = \mu(A \cap B). 
	\end{equation*}
	For every $x \in \Rd$, we write $\tau_x$ to denote the translation operator on measures, that is, for every Borel set $B \subset \Rd$,
	\begin{equation*}  
		(\tau_{-x} \mu)(B) := \mu(x + B),
	\end{equation*}
	and we set, for every $\mu \in \mcl M_\de(\Rd)$ and $x \in \Rd$,
	\begin{equation}
		\label{e.def.amu}
		\a(\mu,x) := \a_\circ(\tau_{-x} \mu). 
	\end{equation}
	Roughly speaking, our model is built in such a way that each particle $X_i(t)$, $i \in \N$, follows a diffusive motion whose diffusion matrix at time $t$ is given by 
	\begin{equation*}  
		\a(\bmu_t, X_i(t)) = \a_\circ\big(\tau_{-X_i(t)} \bmu_t\big) = \a_\circ \bigg( \sum_{j \in \N} \delta_{X_j(t) - X_i(t)}\bigg) .
	\end{equation*}
	As will be explained below, one convenient way to give a precise definition of this stochastic process and its associated semigroup $(P_t)_{t \ge 0}$ is to relate these to the Dirichlet form 
	\begin{equation}  
		\label{e.intro.dirichlet.form}
		f \mapsto \E_\rho \Ll[ \int_\Rd \frac 1 2 \nabla f \cdot \a \nabla f \, \d \mu \Rr] ;
	\end{equation}
	we refer to~\eqref{e.def.deriv} for a definition of the gradient $\nabla$, and to Section~\ref{subsec.Dirichletform} for a detailed construction. We denote by $\PP_{\bmu_0}$ the law of the cloud of particles $(\bmu_t)_{t \ge 0}$ started from $\bmu_0$, and by $\EE_{\bmu_0}$ the associated expectation. A classical object of study is the rescaled density fluctuation field
	\begin{equation}  
		\label{e.def.YtL}
		Y_t^N := N^{-\frac d 2}(\bmu_{N^2 t} - \rho m)(N\, \cdot),
	\end{equation}
	where $N \ge 1$ and $m$ denotes the Lebesgue measure on $\Rd$. We postulate that the initial configuration $\bmu_0$ is sampled according to $\P_\rho$; since we denote by~$\mu$ the canonical random variable under $\E_\rho$, we implicitly set $\mu = \bmu_0$ in expressions such that $\Er \EEbmu$. One can think of $Y_t^N$ as a (random) distribution over $\Rd$, and testing $Y_t^N$ against a smooth function $f \in C^\infty_c(\Rd;\R)$ of compact support results in
	\begin{equation}  
		\label{e.explicit.def.YtL}
		Y_t^N(f) = N^{-\frac d 2} \Ll( \int_\Rd f(N^{-1} x) \, \d \bmu_{N^2 t}(x) - \rho \int_\Rd f(N^{-1} x) \, \d x  \Rr) . 
	\end{equation}
	Using the invariance of the measure $\P_\rho$ under the dynamics, one can easily check that for each fixed $t \ge 0$, the distribution $Y_t^N$ converges in law under $\Pr \PP_{\bmu_0}$ to a white noise over~$\Rd$ with variance $\rho$ as $N$ tends to infinity. The space-time correlations are less direct and are governed by the bulk diffusion matrix $\ab \in \R^{d \times d}_{\mathrm{sym}}$, whose precise definition is given in~\eqref{eq:AbarDef}. For a discrete class of models analogous to our present setting, it is shown in \cite{cha96, fun96, lu94} that, as a space-time distribution, $(Y^N_t)_{t \ge 0}$ converges in law to~$(Y^\infty_t)_{t \ge 0}$ solution to
	\begin{equation}
		\label{e.eq.Yinfty}
		\dr_t Y^\infty_t = \frac 1 2 \nabla \cdot \ab \nabla Y^\infty_t + \nabla \cdot \Ll( \sqrt{\rho \ab} \xi_t \Rr) ,
	\end{equation}
	where $(\xi_t(x))_{t \ge 0, x \in \Rd}$ is a $d$-dimensional space-time white noise over $[0,+\infty) \times \Rd$. In particular, we expect the convergence of two-point density correlation functions, in the sense that for every $f, g \in C^\infty_c(\Rd;\R)$ and $t > s > 0$,
	\begin{equation}
		\label{e.intro.conv.two.point}
		\lim_{N \to + \infty} \E_\rho\EE_{\bmu_0} [Y^N_t(f) Y^N_s(g)]= \E[Y^\infty_t(f) Y^\infty_s(g)].
	\end{equation}
	The latter quantity can be rewritten explicitly as
	\begin{equation}
		\label{e.def.two.point.Yinfty}
		\E[Y^\infty_t(f) Y^\infty_s(g)] = \rho \int_{\Rd \times \Rd} f(x)\Psi_{t-s}(x-y)g(y)\, \d x  \d y,    
	\end{equation}
	where $(\Psi_t(x))_{t \ge 0, x \in \Rd}$ is the heat kernel for the operator $\partial_t - \frac 1 2 \nabla \cdot \ab \nabla$, that is,
	\begin{equation}
		\label{e.def.heat.kernel}
		\Psi_t(x) := \frac{1}{\sqrt{(2\pi t)^{{d}} \det \ab}}\exp\Ll(-\frac{x \cdot \ab^{-1} x}{2 t}\Rr).
	\end{equation}
	Our first main result gives a rate of convergence for \eqref{e.intro.conv.two.point}. We prefer to state it using the unrescaled centered field defined by
	\begin{equation}
		\label{e.def.Y}
		Y_t := Y^{1}_t = \bmu_t - \rho m,
	\end{equation}
	and for every $f \in L^1(\Rd) \cap L^2(\Rd)$, we write
	\begin{equation}
		\label{e.def.Y.f}
		Y_t(f) = \int_\Rd f \, \d \bmu_t - \rho \int_\Rd f \, \d m.
	\end{equation}
	\begin{theorem}[Quantitative asymptotics for two-point functions]
		\label{t.two.point}
		There exists an exponent $\beta(d,\Lambda,\rho) > 0$ and a constant $C(d,\Lambda,\rho) < + \infty$ such that for every $f, g \in L^1(\Rd) \cap L^2(\Rd)$ and $t > s > 0$, we have
		\begin{multline}
			\label{e.two.point}
			\Ll|\E_\rho \EEbmu \Ll[ Y_t(f) Y_s(g) \Rr] -\rho \int_{\Rd \times \Rd} f(x)\Psi_{t-s}(y-x)g(y)\, \d x  \d y \Rr|
			\\
			\le C (t-s)^{-\beta} \|f\|_{L^2} \, \|g\|_{L^2}. 
		\end{multline}	
	\end{theorem}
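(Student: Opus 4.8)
The plan is to rewrite the two‑point function as a matrix element of the semigroup on $L^2(\Pr)$, pass to the resolvent, perform the quantitative homogenization there (this is the only place where the rate of \cite{bulk} is needed), and recover the decay in $t-s$ by inverting the Laplace transform. For $h\in L^1(\Rd)\cap L^2(\Rd)$ let $\Phi_h:=\bigl(\mu\mapsto\int h\,\d\mu-\rho\int h\,\d m\bigr)\in L^2(\Pr)$, so that $\la\Phi_h,\Phi_{h'}\ra_{L^2(\Pr)}=\rho\la h,h'\ra_{L^2}$ by the covariance structure of the Poisson point process. Using stationarity of $\Pr$ under the dynamics, the Markov property and $\Er[P_\tau F]=\Er[F]$, a short computation gives $\Er\EEbmu\Ll[Y_t(f)\,Y_s(g)\Rr]=\la\Phi_g,P_{t-s}\Phi_f\ra_{L^2(\Pr)}$, while with $h_\tau:=\Psi_\tau\ast h=\mathrm e^{\frac12\tau\na\cdot\ab\na}h$ the target quantity is $\rho\la g,f_{t-s}\ra_{L^2}=\la\Phi_g,\Phi_{f_{t-s}}\ra_{L^2(\Pr)}$. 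Thus, with $\tau:=t-s$, it suffices to bound $|\la\Phi_g,P_\tau\Phi_f-\Phi_{f_\tau}\ra_{L^2(\Pr)}|$; since $P_\tau$ and $\mathrm e^{\frac12\tau\na\cdot\ab\na}$ are $L^2$‑contractions, the claimed bound is automatic for $\tau\le1$, so I would assume $\tau\ge1$, and since the left side of \eqref{e.two.point} is bilinear in $(f,g)$ with each term $\le C\|f\|_{L^2}\|g\|_{L^2}$, it is enough to treat $f,g\in C_c^\infty(\Rd;\R)$.

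Taking Laplace transforms, for $\mathrm{Re}\,z>0$ one has $\int_0^\infty\mathrm e^{-z\tau}\la\Phi_g,P_\tau\Phi_f\ra_{L^2(\Pr)}\,\d\tau=\la\Phi_g,(z-\mcl L)^{-1}\Phi_f\ra_{L^2(\Pr)}$ (with $\mcl L$ the generator of the dynamics) and $\int_0^\infty\mathrm e^{-z\tau}\rho\la g,f_\tau\ra_{L^2}\,\d\tau=\rho\la g,(z-\tfrac12\na\cdot\ab\na)^{-1}f\ra_{L^2}$. Since $\Phi_f,\Phi_g$ lie in the first Poisson chaos, only its component of $(z-\mcl L)^{-1}\Phi_f$ matters; writing $\Pi_1,\Pi_{\ge2}$ for the projections onto the first chaos and onto its orthogonal complement in the mean‑zero subspace, the Schur‑complement identity gives $\la\Phi_g,(z-\mcl L)^{-1}\Phi_f\ra_{L^2(\Pr)}=\la\Phi_g,(z-\mcl L_z)^{-1}\Phi_f\ra_{L^2(\Pr)}$ with
\begin{equation*}
	\mcl L_z:=\Pi_1\mcl L\Pi_1+\Pi_1\mcl L\Pi_{\ge2}\,\bigl(z-\Pi_{\ge2}\mcl L\Pi_{\ge2}\bigr)^{-1}\Pi_{\ge2}\mcl L\Pi_1
\end{equation*}
the $z$‑dependent effective generator on the first chaos, identified with $L^2(\Rd)$. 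The heart of the argument is to show that $\mcl L_z$ homogenizes to $\tfrac12\na\cdot\ab\na$ on functions varying on scales $\gg|z|^{-1/2}$, quantitatively: introduce the massive corrector $\chi^z=(\chi^z_j)_{j\le d}$ on $\mmd(\Rd)$ built from $\a$, use the bounds of \cite{bulk} on the finite‑volume approximations of $\ab$ (and on the underlying subadditive/variational quantities) to get quantitative sublinearity and flux (div–curl) bounds for $\chi^z$, and carry out a two‑scale expansion of $(z-\mcl L_z)^{-1}\Phi_f$ around the corrector‑adjusted field attached to $u^z:=(z-\tfrac12\na\cdot\ab\na)^{-1}f$. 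Since $\|u^z\|_{L^2}\le|z|^{-1}\|f\|_{L^2}$ and $u^z$ varies on scale $|z|^{-1/2}$, I would expect this to produce an exponent $\beta_\ast=\beta_\ast(d,\Lambda,\rho)>0$ such that, with $h(z):=\la\Phi_g,(z-\mcl L)^{-1}\Phi_f\ra_{L^2(\Pr)}-\rho\la g,(z-\tfrac12\na\cdot\ab\na)^{-1}f\ra_{L^2}$,
\begin{equation*}
	|h(z)|\le C\,\min(|z|,1)^{\beta_\ast}\,|z|^{-1}\,\|f\|_{L^2}\|g\|_{L^2}\qquad(\mathrm{Re}\,z>0),
\end{equation*}
the regime $|z|\ge1$ being just the trivial estimate $|h(z)|\le 2\rho\|f\|_{L^2}\|g\|_{L^2}/\dist(z,(-\infty,0])$, which in fact holds for every $z\notin(-\infty,0]$.

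Both matrix elements extend analytically to $\C\setminus(-\infty,0]$, and for $f,g\in C_c^\infty$ they decay fast enough on vertical lines that, for any $c>0$,
\begin{equation*}
	\la\Phi_g,P_\tau\Phi_f-\Phi_{f_\tau}\ra_{L^2(\Pr)}=\frac1{2\pi\mathrm i}\int_{c-\mathrm i\infty}^{c+\mathrm i\infty}\mathrm e^{z\tau}\,h(z)\,\d z .
\end{equation*}
I would integrate by parts once in $z$ (the boundary terms vanishing) to rewrite the right side as $\tau^{-1}(2\pi\mathrm i)^{-1}\int\mathrm e^{z\tau}h'(z)\,\d z$, bound $|h'(z)|\le C\min(|z|,1)^{\beta_\ast}\dist(z,(-\infty,0])^{-2}\|f\|_{L^2}\|g\|_{L^2}$ by Cauchy's formula on discs of radius $\tfrac12\dist(z,(-\infty,0])$ (on which only the homogenization bound or the trivial bound on $h$ is used), choose $c=1/\tau$, and evaluate the resulting one‑dimensional integral; this yields $|\la\Phi_g,P_\tau\Phi_f-\Phi_{f_\tau}\ra_{L^2(\Pr)}|\le C\tau^{-\beta_\ast}\|f\|_{L^2}\|g\|_{L^2}$. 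Together with the trivial bound for $\tau\le1$, density of $C_c^\infty$ in $L^1\cap L^2$, and the first step, this is \eqref{e.two.point} with $\beta=\beta_\ast$. (An essentially equivalent route would work directly in $\tau$, via a parabolic two‑scale expansion of $P_\tau\Phi_f$ and a Gronwall/energy argument using dissipativity of $\mcl L$ and $\|\na^k f_\tau\|_{L^2}\lesssim\tau^{-k/2}\|f\|_{L^2}$; the resolvent route has the advantage of isolating the homogenization input and of avoiding the merely polynomial decay of the associated memory kernel.)

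Everything delicate sits in the second step. Because the model is of non‑gradient type, $\mcl L$ does not preserve the chaos decomposition, so the density field evolves non‑Markovianly and $\mcl L_z$ exists only through the Schur complement; the quantitative identity $\mcl L_z\approx\tfrac12\na\cdot\ab\na$ does not follow formally from $\bar\a_\ell\to\bar\a$. One has to revisit the subadditive quantities of \cite{bulk}, construct the massive corrector on $\mmd(\Rd)$, prove its quantitative sublinearity and the associated flux bounds, and verify that applying $\mcl L$ to the corrector‑adjusted two‑scale field produces the expected cancellations with a defect controlled by the rate of \cite{bulk}. Once that package is in hand, the reduction in the first step, the Schur‑complement identity, and the contour estimate in the third step are routine.
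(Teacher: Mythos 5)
Your proposal takes a genuinely different route from the paper. The paper works entirely in the time domain: it reduces to $\Er[G\,(P_t-\Pb_t)F]$ by stationarity and self-adjointness, proves a parabolic two-scale expansion (Proposition~\ref{prop.TwoScalePara}) for the difference $P_\tau F-\Pb_\tau F$ on the full space $\cL^2$ using the finite-volume approximate correctors $\phi_{e_i,z+\cu_n}$ of~\cite{bulk} at a well-chosen mesoscale $3^n$, and then removes the $H^2$-regularity assumption on $g$ by a short semigroup-regularization lemma (Lemma~\ref{lem.regularization}: $\|\Pb_\tau P_t-P_t\|_{\cL^2\to\cL^2}\le\sqrt{\Lambda\tau/t}$) rather than by a density argument. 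Balancing the resulting error terms with $\tau=t^{3/4}$, $3^n\simeq t^{1/16}$ gives $\beta=\min\{\alpha,1\}/16$. You instead pass to the Laplace transform, project onto the first Poisson chaos via a Schur complement to obtain a $z$-dependent effective generator $\mcl L_z$ on $L^2(\Rd)$, carry out a resolvent-level two-scale expansion around a massive corrector, and invert by a contour integral. Both strategies are plausible, and your contour estimate (integrate by parts, bound $h'$ by Cauchy on discs of radius $\tfrac12\dist(z,(-\infty,0])$, shift to $c=1/\tau$) is a clean way to transfer a resolvent bound $|h(z)|\le C\min(|z|,1)^{\beta_*}|z|^{-1}$ into the claimed $\tau^{-\beta_*}$ decay.

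Two remarks on where your plan parts ways with the paper. First, the Schur complement is an avoidable detour: the paper never projects onto the first chaos at all. It compares $P_\tau$ and $\Pb_\tau$ directly on $\cL^2$, with a two-scale ansatz $\tilde G_\tau=\bar G_\tau+\sum_{i,z}(\partial_ig_\tau)_{z+\cu_n}\phi_{e_i,z+\cu_n}$ that lives outside the first chaos, so there is no need for the $z$-dependent nonlocal operator $\mcl L_z$ or for translating homogenization of $\mcl L$ into homogenization of a Schur complement. Since $\mcl L_z$ has no transparent variational structure, proving quantitatively that $\mcl L_z\approx\tfrac12\nabla\cdot\ab\nabla$ is not a routine corollary of~\cite{bulk}; the full-$\cL^2$ comparison sidesteps this. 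Second, and more importantly, your step~2 defers the construction of a massive corrector on $\mmd(\Rd)$ together with its quantitative sublinearity and flux bounds, which is a substantial extension of~\cite{bulk} that you do not carry out. The paper avoids it entirely: the only inputs it needs are Lemma~\ref{lem.JBound} (the rate $3^{-\alpha n}$ for $\nabla\phi_{e_i,\cu_n}-\nabla\phi^*_{e_i,\cu_n}$ and the finite-range independence of the approximate correctors) and the matrix rate~\eqref{eq:QuantConvergence}, both taken verbatim from~\cite{bulk}. So your proposal, as written, still has a real gap at its center: the resolvent homogenization estimate $|h(z)|\le C\min(|z|,1)^{\beta_*}|z|^{-1}\|f\|_{L^2}\|g\|_{L^2}$ is asserted rather than derived, and deriving it along your route would require new corrector estimates. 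If one is going to use the finite-volume correctors at a mesoscale anyway (as you hint by invoking the bounds of~\cite{bulk}), the paper's direct parabolic expansion, capped by the $\sqrt{\Lambda\tau/t}$ regularization lemma, is both shorter and stays closer to what~\cite{bulk} actually provides.
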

	By a simple scaling, Theorem~\ref{t.two.point} yields that for every $f, g \in L^1(\Rd) \cap L^2(\Rd)$, we have 
	\begin{equation}\label{e.two.pointScale}  
		\Ll|  \E_\rho\EEbmu [Y^N_t(f) Y^N_s(g)]- \E [Y^\infty_t(f) Y^\infty_s(g)] \Rr| \le C (N^2(t-s))^{-\beta} \|f\|_{L^2} \, \|g\|_{L^2}. 
	\end{equation}

	As another illustration of our results, we show a quantitative version of the Green--Kubo formula. Standard versions of the Green--Kubo formula involve infinite-time and infinite-volume limits. Here we localize the time and space scales, with an estimate of the error. For convenience, we localize in space using boxes of the form
	\begin{equation}\label{eq.defcube}  
		\cu_m := \Ll( -\frac{3^{m}}{2}, \frac{3^{m}}{2} \Rr) ^d, \quad m \in \N.
	\end{equation}
	The Green--Kubo formula involves an integral of the correlations of the current of the particle density, as for instance in \cite[Proposition~II.2.1]{spohn2012large}. Informally, for every $\mu \in \mmd(\Rd)$, the $\EE_\mu$-averaged instantaneous current at a point $x \in \supp \mu$ in the direction of $p \in \R^d$ should be given by
	\begin{equation*}  
		\frac{1}{2}\nabla_x \cdot(\a(\mu,x)p) = \frac{1}{2}\nabla_x \cdot (\a(\mu' + \de_x, x) p), \qquad \text{ with } \mu' := \mu - \de_x.
	\end{equation*}
	This is a continuous analogue of the quantity 
	$j_l(\eta)$ in \cite[(II.2.23)]{spohn2012large}, with the correspondence of $(l,\eta,0)$ in~\cite{spohn2012large} to $(p,\mu,x)$ in our notation. Since we would like to state a version of the Green--Kubo formula in finite volume, it would be tempting to define the total current over the box $\cu_m$ as
	\begin{equation*}  
		\int_{\cu_m} \frac{1}{2}\nabla_x \cdot (\a(\mu,x) p) \, \d \mu(x).
	\end{equation*}
	However, even if we leave aside the possible difficulties associated with a lack of smoothness of the mapping $x \mapsto \a(\mu' + \de_x, x)$, this definition fails to capture an important boundary contribution. Instead, we define the total current over the box $\cu_m$ by duality as follows. We let $\cL^2(\cu_m)$ be the space of square-integrable functions which are measurable with respect to the configuration in $\cu_m$; see also \eqref{eq.defL2}. We denote by $\cH^1_0(\cu_m)$ the space of functions with square-integrable gradients that can be approximated by smooth functions that only depend on $\mu \mres K$ for some compact set $K \subset \cu_m$; see below~\eqref{eq.defNormalH1} for a precise definition. We denote by $\cH^{-1}(\cu_m)$ the space dual to $\cH^{1}_0(\cu_m)$, with the canonical embedding
	\begin{equation*}  
		\cH^{1}_0(\cu_m) \subset \cL^2(\cu_m) \subset \cH^{-1}(\cu_m),
	\end{equation*}
	and by $\la \cdot , \cdot \ra_{\cH^{-1}(\cu_m), \cH^1_0(\cu_m)}$ the duality pairing between $\cH^{-1}(\cu_m)$ and $\cH^1_0(\cu_m)$. 
	For every $p \in \Rd$ and $m \in \N$, we define $F_{p,m} \in \cH^{-1}(\cu_m)$, the integrated current over~$\cu_m$, by duality, so that for every $g \in \cH^1_0(\cu_m)$, we have
	\begin{equation}\label{eq.defFpm}  
		\la F_{p,m} , g \ra_{\cH^{-1}(\cu_m), \cH^1_0(\cu_m)} := \Er \Ll[\int_{\cu_m} - \frac{1}{2} p \cdot \a \nabla g \, \d \mu\Rr].
	\end{equation}
	As is the case for diffusions on bounded subsets of $\Rd$, we need to choose a boundary condition in order to define the particle dynamics in finite volume, for instance of Dirichlet or Neumann type. In the context of particle systems, the ``Dirichlet''-type boundary condition is understood in the sense provided by the space $\cH^1_0(\cu_m)$. Heuristically, with this boundary condition, particles that hit the boundary instantaneously disappear, and some particles also appear spontaneously at the boundary to maintain detailed balance. Since this in effect places the finite-volume dynamics ``within a bath of particles at equilibrium'', we feel that this is a more appropriate dynamics for the estimation of the total flux than a Neumann-type boundary condition (which would correspond to dynamics with a conserved number of particles that reflect at the boundary). We denote by $(\bmu_t^{(m)})_{t \ge 0}$ the particle dynamics restricted to the box $\cu_m$ with ``Dirichlet'' boundary condition, and by $(P_t^{(m)})_{t \ge 0}$ the associated semigroup, so that $P_t^{(m)}(f)(\bmu_0) = \EEbmu [f(\bmu_t^{(m)})]$; these objects are defined more precisely in
	Section~\ref{sec:5}. In analogy with \cite[(II.2.27)]{spohn2012large} (which we think should display a term $x_\alpha x_\beta$ in place of $\delta_{\alpha \beta} x_\alpha^2$), \cite[(1)]{sasada2018green}, or \cite[(2.2), (2.4), (8.7), (8.9)]{mourrat2019efficient}, our qualitative Green--Kubo formula takes the form 
	
	\begin{multline}
		\label{e.gk.quali}
		\frac{1}{2} p \cdot \ab p = \frac{1}{2}\Er[p\cdot \a_\circ(\mu + \delta_0) p] \\
		- \lim_{\lambda \to 0} \lim_{m \to \infty} \frac{1}{\rho |\cu_m|} \int_0^{+\infty}  e^{-\lambda t}  \la F_{p,m}, P_t^{(m)}(F_{p,m}) \rah \, \d t.
	\end{multline}
	Here we write $\la \cdot , \cdot \ra_{\cH^{-1}, \cH^1_0}$ as a shorthand notation for $\la \cdot , \cdot \ra_{\cH^{-1}(\cu_m), \cH^1_0(\cu_m)}$. Our second main result provides quantitative estimates that allow us to localize this formula in space and time. In particular, we can take $\lambda = 0$ there, in which case the integral of the current correlation becomes the classical case without any localization in time.
	\begin{theorem}[Quantitative Green--Kubo formula]
		\label{t.quanti.gk}
		There exist $\alpha(d,\Lambda,\rho) > 0$ and $C(d, \Lambda, \rho) < + \infty$ such that for every $m \in \N$, $p \in B_1$ and $\lambda \in [0, +\infty)$, we have 
		\begin{multline}\label{eq.quanti.gk}
			\Ll|\frac{1}{2} p \cdot \ab p - \frac{1}{2}\Er[p\cdot \a_\circ(\mu + \delta_0) p] + \frac{1}{\rho |\cu_m|} \int_0^{+\infty}  e^{-\lambda t}  \la F_{p,m}, P_t^{(m)}(F_{p,m}) \rah \, \d t \Rr|
			\\
			\leq C \Ll\{
			\begin{array}{ll}
				1 \qquad & \text{if } \lambda \in [1, +\infty),\\
				\lambda^{\frac{\alpha}{2(1+\alpha)}} \qquad & \text{if } \lambda \in (3^{-2(1+\alpha)m}, 1),\\
				3^{-\alpha m}&  \text{if } \lambda \in[0, 3^{-2(1+\alpha)m}].
			\end{array} \Rr. 
		\end{multline}
	\end{theorem}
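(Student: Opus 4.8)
The plan is to express the current--current correlation as a resolvent of the finite-volume generator, reducing the statement to (i)~the quantitative convergence of a finite-volume subadditive energy to the bulk diffusion matrix---which is the input from~\cite{bulk}---and (ii)~a quantitative comparison of the ``massive'' corrector at rate~$\lambda$ with the genuine Dirichlet corrector on~$\cu_m$, and then to balance scales. For the algebraic reduction, recall that $-\cL^{(m)}$ is nonnegative and self-adjoint with kernel the constants and spectral gap $\gtrsim 3^{-2m}$, so $(\lambda-\cL^{(m)})^{-1}=\int_0^\infty e^{-\lambda t}P^{(m)}_t\,\d t$ for every $\lambda\ge 0$ (read on the orthogonal complement of the constants when $\lambda=0$); hence the integral in~\eqref{eq.quanti.gk} converges and equals $\la F_{p,m},h_\lambda\ra$ with $h_\lambda:=(\lambda-\cL^{(m)})^{-1}F_{p,m}\in\cH^1_0(\cu_m)$, and for $\lambda=0$ the function $h_0$ is the centered Dirichlet corrector, which by~\eqref{eq.defFpm} solves $\Er[\int_{\cu_m}\nabla h_0\cdot\a\nabla g\,\d\mu]=2\la F_{p,m},g\ra$ for all $g\in\cH^1_0(\cu_m)$. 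Testing with $g=h_0$, and using the Mecke formula together with the translation invariance of~$\Pr$ to write $\frac12\Er[\int_{\cu_m}p\cdot\a p\,\d\mu]=\rho|\cu_m|\cdot\frac12\Er[p\cdot\a_\circ(\mu+\delta_0)p]$, one gets the exact identity $\frac1{\rho|\cu_m|}\la F_{p,m},h_0\ra=\frac12\Er[p\cdot\a_\circ(\mu+\delta_0)p]-\nu(\cu_m,p)$, where $\nu(\cu_k,p):=\frac1{\rho|\cu_k|}\inf_{w\in\cH^1_0(\cu_k)}\Er[\int_{\cu_k}\frac12(p+\nabla w)\cdot\a(p+\nabla w)\,\d\mu]$ is the finite-volume Dirichlet subadditive energy. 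Substituting, the quantity inside the absolute value in~\eqref{eq.quanti.gk} becomes
\[
\Ll(\tfrac12 p\cdot\ab p-\nu(\cu_m,p)\Rr)-\tfrac1{\rho|\cu_m|}\la F_{p,m},h_0-h_\lambda\ra,
\]
and $\la F_{p,m},h_0-h_\lambda\ra=\lambda\Er[h_0h_\lambda]\ge 0$ because $h_0$ and $h_\lambda$ arise from $F_{p,m}$ through commuting bounded nonnegative self-adjoint operators on the orthogonal complement of the constants.

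It therefore suffices to bound the two pieces. The first, $|\tfrac12 p\cdot\ab p-\nu(\cu_k,p)|\le C3^{-\alpha k}$ (valid for all $k\le m$, with $\nu(\cu_k,p)\ge\nu(\cu_m,p)\ge\tfrac12 p\cdot\ab p$), is the quantitative homogenization estimate of~\cite{bulk}; since $3^{-\alpha m}$ is dominated by every branch of the right-hand side of~\eqref{eq.quanti.gk} on its range, this piece is always acceptable, and it is the whole bound when $\lambda=0$. For the second term, Legendre duality yields $\la F_{p,m},h_\lambda\ra=\sup_{w\in\cH^1_0(\cu_m)}\bigl(2\la F_{p,m},w\ra-b(w)-\lambda\Er[w^2]\bigr)$ with $b(w):=\frac12\Er[\int_{\cu_m}\nabla w\cdot\a\nabla w\,\d\mu]$, and combining this with the identity $2\la F_{p,m},w\ra-b(w)=\la F_{p,m},h_0\ra-b(w-h_0)$ (a consequence of the equation for $h_0$) gives, for every trial function $w\in\cH^1_0(\cu_m)$,
\[
0\le\tfrac1{\rho|\cu_m|}\la F_{p,m},h_0-h_\lambda\ra\le\tfrac1{\rho|\cu_m|}\Ll(b(w-h_0)+\lambda\Er[w^2]\Rr).
\]

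For each $k\in\{0,\dots,m\}$ we tile $\cu_m$ into its $3^{d(m-k)}$ lattice translates $z+\cu_k$ and set $H_k:=\sum_z h^{(k)}_z\in\cH^1_0(\cu_m)$, where $h^{(k)}_z$ is the translate to $z+\cu_k$ of the centered Dirichlet corrector of~$\cu_k$. Two observations: since $\nabla h^{(k)}_z$ depends only on the configuration in $z+\cu_k$ and $\nu$ is built from the unrestricted coefficient field~$\a$, the Dirichlet energy decomposes exactly, so $b(H_k-h_0)=\bigl(\nu(\cu_k,p)-\nu(\cu_m,p)\bigr)\rho|\cu_m|\le C3^{-\alpha k}\rho|\cu_m|$; and since the fields $h^{(k)}_z$ depend on disjoint regions of~$\mu$ they are i.i.d.\ and centered under~$\Pr$, so $\Er[H_k^2]=3^{d(m-k)}\Er[(h^{(k)}_0)^2]\lesssim 3^{d(m-k)}\,3^{2k}\,\la F_{p,k},h^{(k)}_0\ra\lesssim 3^{2k}\rho|\cu_m|$, using the Poincar\'e inequality on~$\cu_k$ (spectral gap $\gtrsim 3^{-2k}$) and $\la F_{p,k},h^{(k)}_0\ra\lesssim\rho|\cu_k|$. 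Plugging $w=H_k$ into the last display gives $\tfrac1{\rho|\cu_m|}\la F_{p,m},h_0-h_\lambda\ra\le C\bigl(3^{-\alpha k}+\lambda\,3^{2k}\bigr)$ for every $k\le m$, while trivially $\tfrac1{\rho|\cu_m|}\la F_{p,m},h_0-h_\lambda\ra\le\tfrac1{\rho|\cu_m|}\la F_{p,m},h_0\ra\lesssim 1$. Choosing $3^k$ as close as possible to $\lambda^{-1/(2+\alpha)}$, and $k=m$ when this exceeds $3^m$, one checks that the bound is $\lesssim 1$ for $\lambda\ge 1$; $\lesssim\lambda^{\alpha/(2+\alpha)}\le\lambda^{\alpha/(2(1+\alpha))}$ for $3^{-(2+\alpha)m}\le\lambda<1$; and $\lesssim 3^{-\alpha m}+\lambda\,3^{2m}\lesssim 3^{-\alpha m}$ for $\lambda\le 3^{-(2+\alpha)m}$, where moreover $3^{-\alpha m}\le\lambda^{\alpha/(2(1+\alpha))}$ as soon as $\lambda>3^{-2(1+\alpha)m}$. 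Combining with the bound on the first piece yields exactly~\eqref{eq.quanti.gk}.

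The main obstacle is the construction in the last paragraph: one must check that the patched field $H_k$ genuinely lies in $\cH^1_0(\cu_m)$ and that its Dirichlet energy decomposes over the sub-boxes with no boundary error---this uses that the correctors live strictly inside their boxes and that~$\nu$ is built from the unrestricted~$\a$---and, crucially for the $L^2$ bound, that the building blocks can be taken \emph{centered}, which relies on the constants lying in the kernel of the finite-volume generator (so that correctors are defined only modulo constants). One also needs the Poincar\'e / spectral-gap inequality on~$\cu_k$ with the sharp scaling $\gtrsim 3^{-2k}$, which governs both the trivial bound for very small~$\lambda$ and the $\lambda\,3^{2k}$ term; everything else is bookkeeping.
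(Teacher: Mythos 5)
Your proposal is correct and takes a genuinely different, arguably cleaner route than the paper, even though the trial function is the same object in disguise. The paper (Section~5) first converts the current--current integral into an elliptic problem for $U_\lambda=\ell_{p,m}+V_\lambda$ (Lemma~\ref{lem.CurrentCorrelation}), then performs a two-scale expansion with $W=\ell_{p,m}+\sum_z\phi_{p,z+\cu_n}$, derives an energy estimate $\lambda\norm{W-U_\lambda}_{\cL^2}^2+\Ee^\a_{\cu_m}(W-U_\lambda,W-U_\lambda)\lesssim(\lambda 3^{2n}+3^{-2\alpha n})\rho|\cu_m|$, and finally substitutes $W$ for $U_\lambda$ in the Green--Kubo expression and applies Cauchy--Schwarz; that last step costs a square root and produces the rate $\lambda^{1/2}3^n+3^{-\alpha n}$, optimized to $\lambda^{\alpha/(2(1+\alpha))}$. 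You instead split the quantity exactly into $\bigl(\tfrac12 p\cdot\ab p-\nu(\cu_m,p)\bigr)-\tfrac1{\rho|\cu_m|}\la F_{p,m},h_0-h_\lambda\ra$, recognize the first piece as the homogenization error from \cite{bulk}, and bound the second via the Legendre-dual (variational) characterization of $h_\lambda=(\lambda-A^{(m)})^{-1}F_{p,m}$, which gives $0\le\la F_{p,m},h_0-h_\lambda\ra\le b(w-h_0)+\lambda\Er[w^2]$ for any test function $w$. Your trial function $H_k=\sum_z\phi_{p,z+\cu_k}$ is precisely the fluctuation part of the paper's $W$, and the algebraic identity $b(H_k-h_0)=(\nu(\cu_k,p)-\nu(\cu_m,p))\rho|\cu_m|$ (a standard consequence of the Euler--Lagrange equations and the exact subadditivity of $\nu$ over a tiling) replaces the paper's flux-replacement step. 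Because you never linearize the error before estimating, you avoid the Cauchy--Schwarz loss and obtain the rate $3^{-\alpha k}+\lambda 3^{2k}$, optimizing to $\lambda^{\alpha/(2+\alpha)}$, which is strictly better than $\lambda^{\alpha/(2(1+\alpha))}$ for small $\lambda$ (for the same $\alpha$) and therefore certainly establishes the stated bound. All of the supporting facts you invoke are available in the paper: the convergence of the resolvent representation (Proposition~\ref{prop.Fpm} and Lemma~\ref{lem.CurrentCorrelation}), the spectral-gap scaling $\gtrsim 3^{-2m}$ from \eqref{e.Poincare}, the locality/independence of $\phi_{e_i,z+\cu_n}$ from Lemma~\ref{lem.JBound}, and the rate $|\nu(\cu_k,p)-\tfrac12 p\cdot\ab p|\le C3^{-\alpha k}$ from \eqref{eq:QuantConvergence}.

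One small bookkeeping remark: your regime split at $\lambda=3^{-(2+\alpha)m}$ does not align with the theorem's split at $\lambda=3^{-2(1+\alpha)m}$, but you correctly observe that in the intermediate band $(3^{-2(1+\alpha)m},3^{-(2+\alpha)m}]$ your bound $3^{-\alpha m}$ is itself $\le\lambda^{\alpha/(2(1+\alpha))}$, so the stated cases are reproduced after adjusting the constant. This is a cosmetic point, not a gap.
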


	\subsection*{Related works} We now give a brief review of related works. We already mentioned \cite{cha96, fun96, lu94} which identified the scaling limit of the rescaled fluctuation field $Y^N$ for discrete models of non-gradient type similar to our continuous model. Similar results were also obtained in \cite{broxrost,cha94,chayau92, mpsw86, spo86} for other models. The hydrodynamic limit of some non-gradient models was obtained in \cite{klo94, quastel, varadhanII} using the entropy method introduced in \cite{gpv}, and in \cite{fuy} using the relative entropy method introduced in~\cite{yau}. Reference books on the topic include \cite{kipnis1998scaling, komorowski2012fluctuations, spohn2012large}. 
	
	We are not aware of any result addressing the out-of-equilibrium fluctuations of non-gradient models. While we focus here on the derivation of quantitative estimates for the fluctuations at equilibrium, we hope that our techniques will also be useful for at least some out-of-equilibrium situations. In relation to this, the fact  that the bulk diffusion matrix $\ab$ depends smoothly on the density $\rho$ has been shown in \cite{giunti2021smoothness}. For gradient models and small perturbations thereof, out-of-equilibrium fluctuation results have been derived in \cite{chayau92, mfl86, fpv88, jarmen18, prespo83, ravishankar1992fluctuations}.
	Recent progress on higher-order approximations and large deviations include \cite{cornalba2023dean, dirr2020conservative, fehrman2023nonequilibrium}. 
	
	The results of the present paper are based on the quantitative estimates obtained in \cite{bulk}. The approach taken up there is inspired by prior work on the homogenization of elliptic equations, as reviewed in \cite{armstrong2022elliptic, AKMbook, informal}. 
	
	Closely related to the problem investigated here is the question of obtaining quantitative estimates on the relaxation to equilibrium. One way to measure this is to estimate the rate of convergence to zero of quantities of the form
	\begin{equation*}  
		\Er \Ll[ (P_t f)^2 \Rr] = \Er \Ll[ f \, (P_{2t} f) \Rr],
	\end{equation*}
	for suitable centered functions $f : \mmd(\Rd)\to \R$, and where we used the reversibility and the semigroup property of $(P_t)_{t \ge 0}$ to derive the identity above. For specific choices of the observable $f$, the statement of Theorem~\ref{t.two.point} gives an upper bound on the next-order correction to the leading-order behavior of $\Er \Ll[ (P_t f)^2 \Rr]$. For different classes of functions $f$, the leading-order behavior was investigated in \cite{gu2020decay} for the model we also consider here. Other models were considered in \cite{berzeg, ccr, deu94, jlqy, lanyau, lig91}; see also \cite{vardecay} that relates this sort of problem with the quantitative homogenization of elliptic equations. Heat-kernel estimates for a tagged particle evolving in a simple exclusion process were obtained in \cite{giunti2019heat}.

	\subsection*{Organization of the paper} The remainder of this article is organized as follows. In Section~\ref{sec:2}, we introduce further notation and recall several results from~\cite{bulk}, as well as the semigroups on the configuration space. In Section~\ref{sec:3}, we develop a two-scale expansion for elliptic equations on the configuration space, which we find interesting in its own right and illustrates our proof approach in the simplest possible setting. Section~\ref{sec:4} establishes Theorem~\ref{t.two.point} using a two-scale expansion for parabolic equations. Finally, in Section~\ref{sec:5} we prove the quantitative error estimates in the Green--Kubo formula stated in Theorem~\ref{t.quanti.gk}.
	
	\section{Preliminaries}
	\label{sec:2}
	\subsection{Euclidean space}
	
	We denote by $\R^d$, $d\geq 1$, the standard Euclidean space, and as stated in \eqref{eq.defcube}, we denote by~$\cu_m$ the open hypercube of side-length~$3^m$ centered at the origin. For every $n \leq m \in \N$, we set
	\begin{equation}\label{e.Z.def}
		\Z_{m,n}  := 3^n \Zd \cap \cu_m, \qquad \Z_n   := 3^n  \Zd.
	\end{equation}
	Then we can partition a large cube $\cu_m$ into cubes of a smaller scale as $\bigcup_{z \in \Z_{m,n}} (z + \cu_n)$, up to a set of null Lebesgue measure, and similarly, partition $\Rd$ into $\bigcup_{z \in \Z_{n}} (z + \cu_n)$, up to a set of null Lebesgue measure.
	
	For any open set $U \subset \Rd$, we write $L^p(U)$ with $p \geq 1$ for the classical space of functions $f$ for which $|f|^p$ has a finite Lebesgue integral over $U$, and $H^k(U)$ with $k \geq 1$ for the classical Sobolev space of order $k$ on $U$. We let $H^1_0(U)$ stand for the closure in $H^1(U)$ of the set of smooth functions with compact support in $U$. For any non-empty and bounded open set $U$, and $f \in L^1(U)$, we also introduce shorthand notation for the Lebesgue integral of $f$, normalized by the Lebesgue measure of $U$, as
	\begin{equation*}  
		\fint_U  f := \frac{1}{|U|} \int_U f.
	\end{equation*}

	\subsection{Configuration space}
	
	The configuration space $\mmd(\Rd)$ and the Poisson point process $\Pr$ are defined at the beginning of the introduction. For a Borel set $U\subset \R^d$,  we write $\mcl F_U$ for the $\sigma$-algebra generated by the mappings $\mu \mapsto \mu(V)$, for all Borel sets $V \subset U$, completed with all the $\P_{\rho}$-null sets. We also use the shorthand notation $\mcl F$ for $\mcl F_{\Rd}$. The $\sigma$-algebra $\mcl F$ is also the Borel $\sigma$-algebra associated with the topology of vague convergence on $\mmd(\Rd)$, as explained for instance in \cite[Exercise~5.2 and solution]{HJbook}.

	We introduce several function spaces on $\mmd(\Rd)$ that will be used below. On the configuration space $\mmd(\Rd)$, we define 
	\begin{align}\label{eq.defL2}
		\cL^2(U) := L^2(\mmd(\Rd), \mcl F_U, \Pr),
	\end{align}
	the complete space of the $\mcl F_U$-measurable functions with finite second moment under $\Pr$. We write $\cL^2 := \cL^2(\Rd)$ as a shorthand notation. It is a Hilbert space and we denote by $\bracket{\cdot , \cdot}_{\cL^2}$ its associated inner product.

	For every sufficiently smooth function $f : \mmd(\Rd) \to \R$, measure $\mu \in \mmd(\Rd)$, and $x \in \supp \mu$, the gradient $\nabla f(\mu,x)$ is defined by requiring, for every ${k \in \{1,\ldots,d\}}$, that
	\begin{equation}  
		\label{e.def.deriv}
		\mathrm e_k \cdot \nabla f(\mu, x) = \lim_{h \to 0} \frac{f(\mu - \delta_x + \delta_{x + h \mathrm e_k}) - f(\mu)}{h},
	\end{equation}
	where $(\mathrm e_1,\ldots, \mathrm e_d)$ denotes the canonical basis of $\Rd$. 
		We define the spaces of smooth functions $\cC^\infty(U)$ and $\cC^{\infty}_c(U)$ for an open set $U \subset \Rd$ as follows. For every open set $V \subset \Rd$ and any $\mcl F$-measurable function $f$, suppose the configuration outside of $V$ is fixed and the number of particles in $V$ is given by $\mu(V) = n \in \N$. This naturally gives rise to a measurable canonical projection $(x_1,...,x_n) \mapsto f_n(x_1,...,x_n, \mu \mres V^c) := f(\sum_{i = 1}^n \delta_{x_i} + \mu \mres V^c)$, defined on $V^n$. We define $\cC^\infty(U)$ as the space of $\mcl F$-measurable functions $f : \mmd(\Rd) \to \R$ such that for every bounded open set $V \subset U$, $\mu \in \mmd(\Rd)$, and $n \in \N$, the canonical projection $f_n(\cdot, \mu \mres V^c)$ is infinitely differentiable on $V^n$. The space $\cC^{\infty}_c(U)$ is the subspace of $\cC^{\infty}(U)$ of functions $f$ for which there exists a compact set $K \subseteq U$ such that $f$ is $\mcl F_K$-measurable.

		We now define $\cH^1(U, \Pr)$, an infinite dimensional analogue of the classical  Sobolev space $H^1$. For every $f \in \cC^\infty(U)$, we introduce the norm
		\begin{align}\label{eq.defH1}
			\norm{f}_{\cH^1(U, \Pr)} := \Ll(\Er[f^2(\mu)] + \Er\Ll[\int_U \vert \nabla f(\mu, x) \vert^2 \, \d \mu(x) \Rr]\Rr)^{\frac{1}{2}},
		\end{align}
		and we define the space $\cH^1(U, \Pr)$ as the completion of the set of functions $f \in \cC^\infty(U)$ such that $\norm{f}_{\cH^1(U, \Pr)}$ is finite, with respect to this norm. Since the density $\rho$ in this paper is kept fixed, we write $\cH^1(U)$ as a shorthand notation for $\cH^1(U, \Pr)$. We also denote by $\acH^1(U)$ a normalized version of this norm which is convenient in order to absorb the diameter, entering as a factor in Poincar\'e's inequality (see~\eqref{e.Poincare} below), 
		\begin{align}\label{eq.defNormalH1}
			\norm{f}_{\acH^1(U)} := \Ll(\vert U\vert^{-\frac{2}{d}} \Er[f^2(\mu)] + \Er\Ll[\int_U \vert \nabla f(\mu, x) \vert^2 \, \d \mu(x) \Rr]\Rr)^{\frac{1}{2}}.
		\end{align}
		We stress that {functions in $\cH^1(U)$ need not be $\mcl F_U$-measurable}. Indeed, the function $f$ can depend on $\mu \mres U^c$ in a relatively arbitrary (measurable) way, as long as $f \in \cL^2$. If $V \subset U$ is another open set, then $\cH^1(U) \subset \cH^1(V)$. 

		We define the space $\cH^1_0(U)$ as the closure in $\cH^1(U)$ of the space of functions $f \in \cC^\infty_c(U)$   such that $\norm{f}_{\cH^1(U)}$ is finite. Notice in particular that, in contrast with functions in $\cH^1(U)$, a function in $\cH^1_0(U)$ does not depend on $\mu \mres U^c$. If $V \subset U$ is another open set, then $\cH^1_0(V) \subset \cH^1_0(U)$.

		We also recall a version of Poincar\'e's inequality on the configuration space, which was proved in~\cite[Proposition 3.3]{bulk}: there exists a constant $C(d) < + \infty$ such that for every bounded open set $U \subseteq \R^d$ and $f \in \cH_0^1(U)$, one has
		\begin{equation}
			\label{e.Poincare}
			\Er\left[(f - \Er[f])^2 \right] \leq C \diam(U)^2 \Er\left[\int_U |\nabla f|^2 \d \mu \right],
		\end{equation}
		where $\diam(U)$ denotes the Euclidean diameter of $U$.
		
		We denote by $\cH^{-1}(U)$ the dual space of $\cH^1_0(U)$, which is the space of all continuous linear forms on $\cH^1_0(U)$. We denote by $\bracket{\cdot,\cdot}_{\cH^{-1}(U), \cH^1_0(U)}$ the duality pairing, and define the norm $\norm{\cdot}_{\cH^{-1}(U)}$ as
		\begin{align}\label{eq.defHdual}
			\norm{g}_{\cH^{-1}(U)} := \sup_{f \in \cH^1_0(U), \norm{f}_{\cH^1(U)} \le 1} \bracket{g,f}_{\cH^{-1}(U), \cH^1_0(U)}.
		\end{align}

		\subsection{Homogenization of the bulk diffusion matrix}
		
		In this subsection, we recall some results from \cite{bulk}, including the definition of the bulk diffusion matrix $\ab$ and its quantitative convergence from finite-volume approximations $\ab(\cu_m)$ and $\ab_\ast(\cu_m)$.
		
		For a bounded domain $U \subset \Rd$, vectors $p,q \in \Rd$, and $\rho > 0$, we consider the following optimization problems
		\begin{equation}\label{eq:defNu}
			\begin{split}
				\nu(U,p,\rho) &:= \inf_{v \in \ell_{p,U} + \cH^1_0(U)}\Er \Ll[ \frac{1}{\rho \vert U \vert} \int_{U} \frac{1}{2} \nabla v \cdot \a \nabla v \, \d \mu \Rr], \\
				\nu^*(U,q,\rho) &:=  \sup_{u \in \cH^1(U)} \Er \Ll[\frac{1}{\rho \vert U \vert}\int_{U} \Ll( -\frac 1 2 \nabla u \cdot \a \nabla u + q \cdot \nabla u \Rr) \, \d \mu \Rr],
			\end{split}
		\end{equation}
		with $\ell_{p,U} = \int_U p \, \cdot \, x \, \d \mu(x)$. By~\cite[Proposition 4.1]{bulk}, there exist symmetric $(d\times d)$-matrices $\ab(U,\rho), \ab_\ast(U,\rho)$ satisfying ${\id \leq \ab(U,\rho) \leq \Lambda \id}$ and ${\id \leq \ab_\ast(U,\rho) \leq \Lambda \id}$ such that for every $p,q \in \Rd$ and $\rho > 0$,
		\begin{equation}\label{eq:MatrixEq}
			\nu(U,p,\rho) = \frac{1}{2} p \cdot \ab(U,\rho) p, \quad  \quad \nu^\ast(U,q,\rho) = \frac{1}{2} q \cdot \ab_\ast^{-1}(U,\rho) q.
		\end{equation}
		The sequence $(\ab(\cu_m,\rho))_{m \in \N}$ is decreasing in $m$ (in the sense that $\ab(\cu_{m},\rho) - \ab(\cu_{m+1},\rho)$ is positive semi-definite for every $m\in \N$), and we denote its limit by
		\begin{equation}
			\label{eq:AbarDef}
			\ab(\rho) := \lim_{m \rightarrow \infty} \ab(\cu_m,\rho).
		\end{equation}
		We also refer to \cite[Appendix~B]{bulk} for equivalent definitions of $\ab(\rho)$. 
		The main result of~\cite{bulk} is a quantification of the speed of convergence in~\eqref{eq:AbarDef}. More precisely, by the proof of~\cite[Theorem 2.1]{bulk}, there exist constants $\alpha = \alpha(d,\Lambda,\rho) > 0$ and $C = C(d,\Lambda,\rho) < + \infty$ such that
		\begin{equation}
			\label{eq:QuantConvergence}
			|\ab(\cu_m,\rho) - \ab(\rho)|+ |\ab_\ast(\cu_m,\rho) - \ab(\rho)| \leq C 3^{-\alpha m}. 
		\end{equation}
		Concerning the optimization problems in \eqref{eq:defNu}, we write
		\begin{equation}
			\begin{split}
				v(\cdot,U,p) \in \ell_{p,U} + \cH^1_0(U) &\text{ for the minimizer of $\nu(U,p,\rho)$, and} \\
				u(\cdot,U,q) \in \cH^1(U) &\text{ for the maximizer of $\nu^\ast(U, q,\rho)$.}
			\end{split}
		\end{equation}
		As observed in \cite[Proposition~4.1]{bulk}, these optimizers are unique provided that we also impose that $\Er[v(\mu,U,p)] = 0$ and that $\Er[u(\mu,U,q)| \mu(U), \mu \mres U^c] = 0$. 	
		By \cite[(72)]{bulk}, the first-order optimality conditions read as
		\begin{equation}
			\label{eq:VariationalFormula72}
			\begin{split}
				\forall v' \in \cH^1_0(U), \qquad \E_\rho\left[\int_U \nabla v(\cdot,U,p) \cdot \a \nabla v'  \d \mu\right] &= 0,\\
				\forall u' \in \cH^1(U), \qquad \E_\rho\left[\int_U \nabla u(\cdot,U,q) \cdot \a \nabla u'  \d \mu\right] &= \E_\rho\left[\int_U  q\cdot \nabla u' \d \mu \right].
			\end{split}
		\end{equation}
		By removing the affine part from $v(\cdot,U,p)$ and $u(\cdot,U,q)$, we define the \textit{approximate correctors}
		\begin{align}\label{eq.corrector}
			\phi_{p, U} := v(\cdot,U,p) - \ell_{p,U}, \qquad \phi^*_{p, U} := u(\cdot,U,\ab_\ast(U,\rho)p) - \ell_{p,U}.
		\end{align}
		Here the definition of $\phi^*_{p, U}$ is motivated as follows. As seen in \cite[(65)]{bulk}, we can combine \eqref{eq:VariationalFormula72} and \eqref{eq:MatrixEq} to see that the averaged slope of $ u(\cdot,U,q)$ is $\ab_\ast^{-1}(U,\rho) q$:
		\begin{align}\label{eq.slopeDuel}
			\forall q' \in \Rd, \qquad \E_\rho\left[ \frac{1}{\rho \vert U \vert}\int_U  q'\cdot \nabla u(\cdot,U,q) \d \mu \right] = q' \cdot \ab_\ast^{-1}(U,\rho) q.
		\end{align}
		Therefore, the function $u(\cdot,U,\ab_\ast(U,\rho)p)$ has an average slope of $p$. 
		
		The following lemma subsumes important properties of correctors that will be instrumental in the following sections. Its proof essentially follows from~\cite[Sections~4 and~5]{bulk}.
		\begin{lemma}\label{lem.JBound}
			(1) For every $n \in \N$, $z,z' \in \mathcal{Z}_{n}$ with $\dist(z,z') > 3^n$, and $i \in \{1,\cdots,d\}$, the quantities $\phi_{e_i,z + \cu_n}$ and  $\phi_{e_i,z' + \cu_n}$ are independent random variables.
			
			(2) There exist constants $\alpha(d,\Lambda,\rho) > 0$ and $C(d,\Lambda,\rho) < + \infty$  such that for every $n\in \N$ and $i \in \{1,\cdots,d\}$, we have
			\begin{equation}\label{eq.JBound}
				\Er\Ll[\frac{1}{\rho|\cu_n|}\int_{\cu_n} \vert \nabla \phi_{e_i, \cu_n} - \nabla \phi^*_{e_i, \cu_n} \vert^2 \, \d \mu\Rr] \leq C3^{-  2\alpha n}.
			\end{equation}
		\end{lemma}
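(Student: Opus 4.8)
The plan is to establish the two assertions separately; both follow by unwinding the variational definitions of the correctors, so no single step is a genuine obstacle — the lemma collects consequences of the corrector theory developed in~\cite{bulk}.

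For part~(1) I would show that $\phi_{e_i,z+\cu_n}$ is $\mcl F_{z+\cu_n}$-measurable and then invoke independence of a Poisson point process over disjoint Borel sets. The measurability is essentially built into the setup: the minimizer $v(\cdot,z+\cu_n,e_i)$ of $\nu(z+\cu_n,e_i,\rho)$ is by definition constrained to lie in $\ell_{e_i,z+\cu_n}+\cH^1_0(z+\cu_n)$, the affine part $\ell_{e_i,z+\cu_n}(\mu)=\int_{z+\cu_n}e_i\cdot x\,\d\mu(x)$ depends only on $\mu\mres(z+\cu_n)$, and every element of $\cH^1_0(z+\cu_n)$ is $\mcl F_{z+\cu_n}$-measurable (the matrix field $\a$ enters only through the functional being optimized, not through the constraint set). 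Hence $\phi_{e_i,z+\cu_n}=v(\cdot,z+\cu_n,e_i)-\ell_{e_i,z+\cu_n}$ is $\mcl F_{z+\cu_n}$-measurable. Whenever $z,z'\in\Z_n$ are distinct — in particular when $\dist(z,z')>3^n$ — the open cubes $z+\cu_n$ and $z'+\cu_n$ are disjoint, so $\mcl F_{z+\cu_n}$ and $\mcl F_{z'+\cu_n}$ are independent $\sigma$-algebras, which gives the claim.

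For part~(2) I would first rewrite the gradient difference of the correctors in terms of the primal and dual optimizers. Since $\nabla\ell_{e_i,\cu_n}=e_i$ holds $\mu$-a.e.\ on $\cu_n$ (immediate from~\eqref{e.def.deriv}), the definition~\eqref{eq.corrector} gives, $\mu$-a.e.\ on $\cu_n$,
\begin{equation*}
	\nabla\phi_{e_i,\cu_n}-\nabla\phi^*_{e_i,\cu_n}=\nabla v(\cdot,\cu_n,e_i)-\nabla u\bigl(\cdot,\cu_n,\ab_\ast(\cu_n,\rho)e_i\bigr).
\end{equation*}
Abbreviating $U=\cu_n$, $p=e_i$, $q=\ab_\ast(U,\rho)p$, $v=v(\cdot,U,p)$ and $u=u(\cdot,U,q)$, and using the lower bound in~\eqref{e.ass.unif.ell}, it suffices to evaluate the energy $\Er\bigl[\tfrac{1}{\rho|U|}\int_U(\nabla v-\nabla u)\cdot\a(\nabla v-\nabla u)\,\d\mu\bigr]$. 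Expanding this quadratic form (using that $\a$ is symmetric) produces the energy of $v$, the energy of $u$, and a cross term. The energy of $v$ equals $p\cdot\ab(U,\rho)p$ by~\eqref{eq:defNu} and~\eqref{eq:MatrixEq}. Testing the second identity in~\eqref{eq:VariationalFormula72} against $u'=u$ shows that the energy of $u$ equals $\Er[\tfrac{1}{\rho|U|}\int_Uq\cdot\nabla u\,\d\mu]$, which equals $q\cdot p=p\cdot\ab_\ast(U,\rho)p$ by~\eqref{eq.slopeDuel}. For the cross term I would note that $v=\ell_{p,U}+\phi_{p,U}$ belongs to $\cH^1(U)$ (since $\ell_{p,U}$ has finite $\cH^1(U)$-norm and $\cH^1_0(U)\subset\cH^1(U)$), so $v$ is an admissible test function in the second identity of~\eqref{eq:VariationalFormula72}, which identifies the cross term with $\Er[\tfrac{1}{\rho|U|}\int_Uq\cdot\nabla v\,\d\mu]$; since $\phi_{p,U}\in\cH^1_0(U)$ and $\Er[\int_Uq'\cdot\nabla\psi\,\d\mu]=0$ for every $q'\in\Rd$ and $\psi\in\cH^1_0(U)$ (a discrete divergence-theorem identity on the configuration space, following from the Mecke formula and the definition of $\cH^1_0(U)$ as a closure of $\cC^\infty_c(U)$), the averaged slope of $v$ equals $p$, so the cross term is again $q\cdot p=p\cdot\ab_\ast(U,\rho)p$. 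Collecting the three contributions,
\begin{equation*}
	\Er\Ll[\frac{1}{\rho|U|}\int_U(\nabla v-\nabla u)\cdot\a(\nabla v-\nabla u)\,\d\mu\Rr]=p\cdot\bigl(\ab(\cu_n,\rho)-\ab_\ast(\cu_n,\rho)\bigr)p.
\end{equation*}

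To conclude, I would estimate the right-hand side by the triangle inequality through $\ab(\rho)$ together with~\eqref{eq:QuantConvergence}, obtaining $|\ab(\cu_n,\rho)-\ab_\ast(\cu_n,\rho)|\le 2C3^{-\alpha n}$; since $|p|=1$, combining this with the ellipticity lower bound yields~\eqref{eq.JBound} after relabeling the exponent $\alpha$ (for instance replacing it by $\alpha/2$). The computation is essentially bookkeeping with the primal and dual optimization problems; the point requiring the most care is the vanishing of the averaged slope of functions in $\cH^1_0(U)$, together with verifying the admissibility of the auxiliary test functions (notably $v\in\cH^1(U)$) — but these are standard inputs from~\cite{bulk} rather than real difficulties.
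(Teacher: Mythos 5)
Your proof is correct, and its content matches the paper's: both reduce part~(2) to the quantity $J(\cu_n,e_i,\ab_\ast(\cu_n,\rho)e_i) = \tfrac12 p\cdot(\ab(\cu_n,\rho)-\ab_\ast(\cu_n,\rho))p$, admitting a quadratic representation as the $\a$-energy of $\nabla v - \nabla u$, and then invoke the quantitative convergence of the finite-volume approximations. The main difference is one of self-containment: the paper cites \cite[Proposition~4.2, (86), (87)]{bulk} for the quadratic representation and \cite[Section~5.4]{bulk} for the bound $J \le C3^{-2\alpha n}$, whereas you re-derive the representation in place by expanding the energy of $v-u$ into the primal energy ($p\cdot\ab(\cu_n)p$), the dual energy ($p\cdot\ab_\ast(\cu_n)p$ via \eqref{eq:VariationalFormula72} and \eqref{eq.slopeDuel}), and the cross term (again $p\cdot\ab_\ast(\cu_n)p$, using $v$ as test function in \eqref{eq:VariationalFormula72} and the vanishing of the averaged slope of $\cH^1_0$ functions), and then use the triangle inequality through $\ab(\rho)$ together with \eqref{eq:QuantConvergence}. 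This route is slightly more elementary (it avoids reaching into the internals of \cite[Section~5.4]{bulk}) but pays a harmless factor $2$ in the exponent, yielding $3^{-\alpha n}$ rather than the paper's $3^{-2\alpha n}$; since the lemma only asserts existence of some positive $\alpha$, the relabeling you note is all that is needed. Part~(1) is handled identically: $\cH^1_0$-membership gives $\mcl F_{z+\cu_n}$-measurability (you spell out why, the paper asserts it) and independence follows from the Poisson independence over disjoint cubes.
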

		\begin{proof}
			The claim (1) follows directly from observing that $\phi_{e_i,z + \cu_n}$ and  $\phi_{e_i,z' + \cu_n}$ are in $\cH^1_0(z + \cu_n)$ and $\cH^1_0(z' + \cu_n)$ respectively.
			
			Concerning the claim (2), for every $n \in \N$ and $p \in \R^d$, we introduce the quantity
			\begin{equation}\label{e.def.J}
				\begin{split}
					J(\cu_n,p,\ab_\ast(\cu_n,\rho) p) &:= \nu(\cu_n,p,\rho) + \nu^*(\cu_n,\ab_\ast(\cu_n,\rho)p,\rho ) - p \cdot \ab_\ast(\cu_n,\rho)p \\
					& = \frac{1}{2} p \cdot \ab(\cu_n,\rho)p - \frac{1}{2} p \cdot \ab_\ast(\cu_n,\rho) p.
				\end{split}
			\end{equation}
			By~\cite[Proposition 4.2, (86) and (87)]{bulk}, we see that upon writing
			\begin{equation}\label{eq.wDifference}
				\begin{split}
					w(\mu,\cu_n,e_i) &:= u(\mu,\cu_n,\ab_\ast(\cu_n,\rho)e_i) - v(\mu,\cu_n,e_i)\\
					&= \phi_{e_i, \cu_n} - \phi^*_{e_i, \cu_n},
				\end{split}
			\end{equation}
			the quantity $J$ admits the quadratic representation
			\begin{equation}\label{eq.JQuadra}
				J(\cu_n,e_i,\ab_\ast(\cu_n,\rho) e_i) = \Er\Ll[\frac{1}{\rho \vert \cu_n \vert} \int_{\cu_n} \frac{1}{2} \nabla  w(\mu,\cu_n,e_i)  \cdot \a  \nabla  w(\mu,\cu_n,e_i)  \, \d \mu \Rr].
			\end{equation}
			Here we remark that our $w$ is different from that in \cite[(86)]{bulk} up to a term that may only depend on $\mu(\cu_n)$ and $\mu \mres \cu_n^c$, but this does not change the value of the quadratic form on the right side of \eqref{eq.JQuadra}. Combining \eqref{eq.wDifference}, \eqref{eq.JQuadra}, and \eqref{e.ass.unif.ell},  we obtain that
			\begin{equation}\label{e.UpperBound.Corrector.Diff}
				\Er\Ll[\frac{1}{\rho|\cu_n|}\int_{\cu_n} \vert \nabla \phi_{e_i, \cu_n} - \nabla \phi^*_{e_i, \cu_n} \vert^2 \, \d \mu\Rr]  \leq C J(\cu_n,e_i,\ab_\ast(\cu_n,\rho) e_i).
			\end{equation}
			On the other hand, by~\cite[Section 5.4]{bulk}, one knows that
			\begin{equation}
				\label{e.UpperBound.J}
				J(\cu_n,e_i,\ab_\ast(\cu_n,\rho) e_i)  \leq C3^{-2\alpha n}.
			\end{equation}
			Upon combining~\eqref{e.UpperBound.Corrector.Diff} and~\eqref{e.UpperBound.J}, the claim \eqref{eq.JBound} follows.
		\end{proof}
		
		\subsection{Semigroup and diffusion on configuration space}\label{subsec.Dirichletform}
		In this part, we summarize some properties of the semigroup and the associated diffusion process on the configuration space $\mmd(\Rd)$. One possible route to the definition of these objects would be to start by considering initial configurations with a finite number of particles. If $N$ particles are involved, the semigroup and stochastic process we aim to define are those associated with a divergence-form operator on $(\R^d)^N$, and standard methods apply: one can for instance construct the transition probabilities as in \cite[Appendix~E]{AKMbook} and then construct the stochastic process using the Kolmogorov extension and continuity theorems. Once this is done, one can then seek to define a stochastic process for sufficiently ``spread out'' initial point clouds (in particular for $\P_\rho$-almost every configuration) through a limit procedure. This is the method used in \cite{lang1977unendlich} for a slightly different model. 
		
		Since the stochastic process will not play a major role in this paper, we prefer to explain a more direct approach using the theory of Dirichlet forms. This approach has been explored in detail in \cite{albeverio1996differential, albeverio1996canonical, albeverio1998analysis, albeverio1998analysis2, kondratiev2003heat, ma2000construction, osada1996dirichlet, rockner1998stochastic, yoshida1996construction}, and the necessary conditions for the definition of the semigroup and stochastic process are easy to check for our model. 
		
		We endow the space $\mmd(\Rd)$ with the topology of vague convergence, and define the symmetric form on $\cL^2$ ($= L^2(\mmd(\Rd), \mcl F, \Pr)$) by setting
		\begin{align}\label{eq.Dirichlet}
			\Ll\{\begin{aligned}
				& \D(\Ee^{\a}) := \cH^1(\Rd),\\
				& \displaystyle{\forall u,v \in \D(\Ee^{\a}), \quad 			\Ee^{\a}(u,v) := \Er\Ll[\int_{\Rd} \frac{1}{2}\nabla u \cdot \a \nabla v \, \d \mu \Rr].}
			\end{aligned}\Rr.
		\end{align}
		The symmetric form $(\Ee^\a, \D(\Ee^\a))$ is closed: this means that if a sequence of elements of~$\D(\Ee^\a)$ is a Cauchy sequence with respect to the norm induced by the symmetric form $\Ee^{\a}_1(u,v) = \Ee^{\a}(u,v) + \bracket{u, v}_{\cL^2}$, then this sequence converges to an element of~$\D(\Ee^\a)$ with respect to this norm (see also \cite[Section~1.1]{fukushima2010dirichlet}). We can also verify that $\Ee^\a$ is Markovian, since for the unit contraction function $\phi_{\epsilon} \in C^\infty(\R)$ as in \cite[(1.1.5), Exercise~1.2.1]{fukushima2010dirichlet}, we have that $\vert \phi'_{\epsilon} \vert \leq 1$ and 
		\begin{align*}
			\forall u \in \D(\Ee^{\a}), \quad \Ee^\a(\phi_{\epsilon}(u), \phi_{\epsilon}(u)) = \Er\Ll[\int_{\Rd} \frac{1}{2}\vert \phi'_{\epsilon}(u)\vert^2 \nabla u \cdot \a \nabla u \, \d \mu \Rr] \leq \Ee^\a(u, u).
		\end{align*} 
		Therefore, $(\Ee^\a, \D(\Ee^\a))$ is a Dirichlet form. By the Riesz representation theorem (see \cite[Theorem~1.3.1]{fukushima2010dirichlet}), the Dirichlet form $(\Ee^\a, \D(\Ee^\a))$ defines a unique self-adjoint positive semi-definite operator $-A$ on $\cL^2$ such that 
		\begin{align}\label{eq.defA}
			\Ll\{\begin{array}{ll}
				\D(\Ee^{\a}) & = \D(\sqrt{-A}), \\
				\Ee^{\a}(u,v) & = \bracket{\sqrt{-A}u, \sqrt{-A}v}_{\cL^2},
			\end{array}\Rr.
		\end{align}
		where $\D(\sqrt{-A})$ denotes the domain of the operator $\sqrt{-A}$. Using the spectral decomposition, the self-adjoint operator $-A$ then defines a strongly continuous semigroup $P_t := \exp(tA)$ with $t \geq 0$ on $\cH^1(\Rd)$ (see \cite[Lemma~1.3.2]{fukushima2010dirichlet}). We will also exploit the close relationship between the semigroup and the resolvent of $-A$, which is the family of operators $(\lambda - A)^{-1}$ for $\lambda$ ranging in $(0,+\infty)$, and we refer to \cite[Section~1.3]{fukushima2010dirichlet} for more on this. We may use the informal notation $\frac{1}{2} \nabla \cdot \a \nabla$ to denote the operator $A$. 
		
		Since $\mmd(\Rd)$ is not locally compact with respect to the vague topology, we construct the associated Markov process using the notion of quasi-regular Dirichlet forms; see \cite[Chapter~IV.3]{ma2012introduction}, \cite{ma2000construction}, and here we follow the steps in \cite[Section~4]{rockner1998stochastic}. We denote by $\Ee$ the Dirichlet form with identity diffusion matrix
		\begin{align*}
			\Ll\{\begin{aligned}
				& \D(\Ee) := \cH^1(\Rd),\\
				& \displaystyle{\forall u,v \in \D(\Ee), \quad 			\Ee(u,v) := \Er\Ll[\int_{\Rd} \frac{1}{2}\nabla u \cdot  \nabla v \, \d \mu \Rr].}
			\end{aligned}\Rr.
		\end{align*}
		Our Dirichlet form $\Ee^{\a}$ is comparable to $\Ee$ because of the uniform ellipticity in \eqref{e.ass.unif.ell}, 
		\begin{align}\label{eq.Compare}
			\forall u \in \D(\Ee) = \D(\Ee^\a), \qquad \Ee(u,u) \leq \Ee^\a(u,u) \leq \Lambda \Ee(u,u).
		\end{align} 
		The quasi-regularity of $\Ee$ is proved in \cite[Theorem~4.12]{rockner1998stochastic} using the result in \cite{ma2000construction}. With the comparison in \eqref{eq.Compare}, the notions of $\Ee$-nest, $\Ee$-exceptional, and $\Ee$-quasi-continuous in \cite[Definitions~4.10 and~4.11]{rockner1998stochastic} are equivalent to those of $\Ee^\a$-nest, $\Ee^\a$-exceptional, and $\Ee^\a$-quasi-continuous, respectively. From this, we obtain  the quasi-regularity of $\Ee^\a$. The same argument also proves the locality of $\Ee^\a$ viewing that of $\Ee$ proved in \cite[Corollary~4.13]{rockner1998stochastic}. As a consequence of \cite[Theorems~IV.3.5, V.1.5 and~V.1.11]{ma2012introduction}, there exists an  $\mmd(\Rd)$-valued Markov process $(\bmu_t)_{t \geq 0}$ that is canonically associated with the Dirichlet form $\Ee^\a$, and we denote by $\PP_{\bmu_0}$ and $\EE_{\bmu_0}$ the probability and expectation associated with this process starting from the configuration $\bmu_0$. For every $u \in \cL^2$ and $\Ee^\a$-quasi every $\bmu_0 \in \mmd(\Rd)$, we have that 
		\begin{align*}
			(P_t u)(\bmu_0) = \EE_{\bmu_0}[u(\bmu_t)].
		\end{align*}
		Moreover, $\Pr$ is a reversible measure for  $(\bmu_t)_{t \geq 0}$.

		In this paper, we mostly focus on the properties of the semigroup. 
		\begin{proposition}[Elementary properties of the semigroup]\label{prop.ElementSemigroup}
			For every $u \in \cL^2$, the function $u_t := P_t u \in \cH^1(\Rd)$, $t > 0$, satisfy the following properties.
			\begin{enumerate}
				\item (Energy solution) For every $T > 0$, the function $u_\cdot (\cdot) : [0,T] \times \mmd(\Rd) \to \R$ is the unique solution in $L^2([0,T], \cH^1(\Rd)) \cap C([0,T], \cL^2)$ with $\partial_t u_t \in L^2([0,T], \cH^{-1}(\Rd))$ such that for every $t \in [0,T]$,
				\begin{align}\label{eq.semigroupA}
					\forall v \in \cH^1(\Rd), \qquad \Er[u_t v] - \Er[u v] + \int_{0}^t \Ee^\a(u_s, v) \, \d s = 0.
				\end{align}	
				The function $u_\cdot(\cdot)$ also satisfies the energy evolution identity
				\begin{align}\label{eq.L2Energy}
					\forall \tau \in [0, t), \qquad  \frac{1}{2}\norm{u_\tau}^2_{\cL^2} = \frac{1}{2}\norm{u_t}^2_{\cL^2} + \int_{\tau}^t  \Ee^\a (u_s, u_s) \, \d s.
				\end{align}
				\item (Decay of $\cL^2$ and energy) The mappings $t \mapsto \Er[(u_t)^2]$ and $t \mapsto \Ee^{\a}(u_t, u_t)$ are decreasing. Moreover, we have
				\begin{align}\label{eq.L2Decay}
					\forall \tau \in [0, t), \qquad \Ee^\a(u_t, u_t)  \leq \frac{1}{2(t - \tau)}\Er[(u_\tau)^2].    
				\end{align}
				Finally, when $u \in \D(\Ee^\a)$, we have $\Ee^{\a}(u_t, u_t) \xrightarrow{t \to 0} \Ee^{\a}(u, u)$.
			\end{enumerate}
		\end{proposition}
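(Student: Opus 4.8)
The plan is to derive both parts from the spectral theorem for the self-adjoint operator $-A$ attached to $(\Ee^\a,\D(\Ee^\a))$ through \eqref{eq.defA}, together with the elementary energy method in the Gelfand triple $\cH^1(\Rd)\subset\cL^2\subset\cH^{-1}(\Rd)$. Write the spectral resolution $-A=\int_{[0,\infty)}\lambda\,\d E_\lambda$, and for $u\in\cL^2$ let $\mu_u(\d\lambda):=\d\bracket{E_\lambda u,u}_{\cL^2}$ be the associated finite Borel measure on $[0,\infty)$ with total mass $\norm{u}_{\cL^2}^2$. Then $u_t=P_tu=\int e^{-t\lambda}\,\d E_\lambda u$, and since $\sup_{\lambda\ge 0}\lambda e^{-2t\lambda}=(2et)^{-1}<+\infty$ for $t>0$ one has $u_t\in\D(\sqrt{-A})=\cH^1(\Rd)$ for every $t>0$, with
\begin{equation*}
\norm{u_t}_{\cL^2}^2=\int_{[0,\infty)}e^{-2t\lambda}\,\mu_u(\d\lambda),\qquad \Ee^\a(u_t,u_t)=\norm{\sqrt{-A}\,u_t}_{\cL^2}^2=\int_{[0,\infty)}\lambda e^{-2t\lambda}\,\mu_u(\d\lambda).
\end{equation*}

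Part (2) is then immediate: the two maps $t\mapsto\norm{u_t}_{\cL^2}^2$ and $t\mapsto\Ee^\a(u_t,u_t)$ are non-increasing because the integrands decrease in $t$; applying the second identity to the dynamics started from $u_\tau$ (using the semigroup property $u_t=P_{t-\tau}u_\tau$) and bounding $\lambda e^{-2(t-\tau)\lambda}\le(2e(t-\tau))^{-1}\le(2(t-\tau))^{-1}$ pointwise yields \eqref{eq.L2Decay}; and when $u\in\D(\Ee^\a)$ one has $\int\lambda\,\mu_u(\d\lambda)=\Ee^\a(u,u)<+\infty$, so dominated convergence gives $\Ee^\a(u_t,u_t)\to\Ee^\a(u,u)$ as $t\to0$. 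Fubini's theorem moreover gives, for $0\le\tau<t$,
\begin{equation*}
\int_\tau^t\Ee^\a(u_s,u_s)\,\d s=\int_{[0,\infty)}\Big(\int_\tau^t\lambda e^{-2s\lambda}\,\d s\Big)\mu_u(\d\lambda)=\tfrac12\int_{[0,\infty)}\bigl(e^{-2\tau\lambda}-e^{-2t\lambda}\bigr)\,\mu_u(\d\lambda)=\tfrac12\norm{u_\tau}_{\cL^2}^2-\tfrac12\norm{u_t}_{\cL^2}^2,
\end{equation*}
which is \eqref{eq.L2Energy}; in particular $\int_0^T\Ee^\a(u_s,u_s)\,\d s\le\tfrac12\norm{u}_{\cL^2}^2$, a bound that will be used repeatedly below.

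For part (1), continuity $u_\cdot\in C([0,T],\cL^2)$ is the strong continuity of $(P_t)$, and uniform ellipticity \eqref{e.ass.unif.ell} gives $\norm{u_t}_{\cH^1}^2\le\norm{u_t}_{\cL^2}^2+2\Ee^\a(u_t,u_t)$, so that $\int_0^T\norm{u_t}_{\cH^1}^2\,\d t\le(T+1)\norm{u}_{\cL^2}^2<+\infty$ by the bound above; hence $u_\cdot\in L^2([0,T],\cH^1(\Rd))$, with measurability following from $\cL^2$-continuity. For $t>0$, analyticity of the self-adjoint semigroup gives $u_t\in\D(A)$ and $\partial_t u_t=Au_t$ in $\cL^2$, and pairing this with $v\in\cH^1(\Rd)=\D(\sqrt{-A})$ via \eqref{eq.defA} gives $\bracket{\partial_t u_t,v}_{\cL^2}=-\Ee^\a(u_t,v)$. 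Integrating this identity in $\cL^2$ over $[\varepsilon,t]$ and letting $\varepsilon\to0$ — using $u_\varepsilon\to u$ in $\cL^2$ and the convergence of $\int_0^t\Ee^\a(u_s,v)\,\d s$, which follows from $|\Ee^\a(u_s,v)|\le\Ee^\a(u_s,u_s)^{1/2}\Ee^\a(v,v)^{1/2}$ together with $\int_0^t\Ee^\a(u_s,u_s)^{1/2}\,\d s\le t^{1/2}\bigl(\tfrac12\norm{u}_{\cL^2}^2\bigr)^{1/2}$ — yields \eqref{eq.semigroupA} for every $v\in\cH^1(\Rd)$. The same bound with the estimate $\Ee^\a(v,v)\le\tfrac{\Lambda}{2}\norm{v}_{\cH^1}^2$ shows $\norm{\partial_t u_t}_{\cH^{-1}}\le(\Lambda/2)^{1/2}\Ee^\a(u_t,u_t)^{1/2}$, hence $\partial_t u_\cdot\in L^2([0,T],\cH^{-1}(\Rd))$; and \eqref{eq.L2Energy}, already established above, is the asserted energy evolution identity.

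It remains to prove uniqueness (existence being provided by $u_\cdot=P_\cdot u$, which the previous paragraph shows belongs to the stated class). If $u^{(1)}_\cdot$ and $u^{(2)}_\cdot$ both lie in that class and satisfy \eqref{eq.semigroupA} with the same $u$, then $w_\cdot:=u^{(1)}_\cdot-u^{(2)}_\cdot$ lies in $L^2([0,T],\cH^1)\cap C([0,T],\cL^2)$ with $\partial_t w_\cdot\in L^2([0,T],\cH^{-1})$ and satisfies $\Er[w_tv]+\int_0^t\Ee^\a(w_s,v)\,\d s=0$ for every $v\in\cH^1(\Rd)$ and $t\in[0,T]$; letting $t\to0$ and using density of $\cH^1(\Rd)$ in $\cL^2$ gives $w_0=0$. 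Differentiating in $t$ gives $\bracket{\partial_t w_t,v}=-\Ee^\a(w_t,v)$, and the standard chain rule for functions in $L^2([0,T],\cH^1)\cap C([0,T],\cL^2)$ with time-derivative in $L^2([0,T],\cH^{-1})$ permits the choice $v=w_t$, giving $\tfrac12\tfrac{\d}{\d t}\norm{w_t}_{\cL^2}^2=-\Ee^\a(w_t,w_t)\le0$ for a.e.\ $t$; hence $\norm{w_t}_{\cL^2}^2\le\norm{w_0}_{\cL^2}^2=0$ and $w_\cdot\equiv0$. I expect the only point requiring real care to be in part (1): the pointwise bounds on $\norm{u_t}_{\cH^1}$ and $\norm{\partial_t u_t}_{\cH^{-1}}$ degenerate like $t^{-1/2}$ as $t\to0$, so that membership in the time-$L^2$ spaces and the validity of the weak formulation all the way down to $t=0$ rest on the integrated identity $\int_0^T\Ee^\a(u_s,u_s)\,\d s\le\tfrac12\norm{u}_{\cL^2}^2$ rather than on any pointwise-in-time control; everything else is routine Hilbert-space and Dirichlet-form bookkeeping.
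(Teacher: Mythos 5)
Your proof is correct, and its backbone — the spectral resolution of the self-adjoint generator $-A$, with $u_t=\int e^{-t\lambda}\,\d E_\lambda u$, $\|u_t\|_{\cL^2}^2=\int e^{-2t\lambda}\,\mu_u(\d\lambda)$ and $\Ee^\a(u_t,u_t)=\int\lambda e^{-2t\lambda}\,\mu_u(\d\lambda)$ — is exactly what the paper uses for part (2). The main variation is in part (1): the paper outsources existence and uniqueness of the energy solution to the classical theory (citing Evans' Sections 7.1.2.b--c) and then verifies by a short spectral computation that $P_tu$ coincides with it, whereas you reprove both the weak formulation and uniqueness from scratch (analyticity gives $\partial_t u_t=Au_t$ for $t>0$, integrate on $[\varepsilon,t]$ and pass $\varepsilon\to0$ using $\int_0^t\Ee^\a(u_s,u_s)^{1/2}\,\d s<\infty$; then a Lions chain rule plus $w_0=0$ gives uniqueness). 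This buys self-containment at the cost of a slightly longer argument. You also derive the energy identity \eqref{eq.L2Energy} directly by Fubini in the spectral measure and obtain \eqref{eq.L2Decay} from the pointwise bound $\lambda e^{-2(t-\tau)\lambda}\le(2(t-\tau))^{-1}$, whereas the paper deduces \eqref{eq.L2Energy} by testing the weak formulation and then gets \eqref{eq.L2Decay} from \eqref{eq.L2Energy} combined with the monotonicity of $t\mapsto\Ee^\a(u_t,u_t)$. Your route is cleaner on this point (the paper's ``testing with $v=u_t$'' is a mild abuse that really means testing with $v=u_s$ and invoking the chain rule), but both yield the same constants and conclusions.
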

		\begin{proof}
			Following the classical energy solution theory of the evolution equation, there exists a unique solution $\tilde u_t \in L^2([0,T], \cH^1(\Rd)) \cap C([0,T], \cL^2)$ and $\partial_t \tilde u_t \in L^2([0,T], \cH^{-1}(\Rd))$; see \cite[Sections~7.1.2.b and 7.1.2.c, Theorems~2, 3, 4]{evans2022partial}. Moreover, using the definition \eqref{eq.defA}, $u_t = P_t u = e^{tA} u$ satisfies that for every $v \in \cH^1(\Rd)$ 
			\begin{align*}
				\bracket{u_t, v}_{\cL^2} - \bracket{u, v}_{\cL^2} &= \bracket{e^{tA} u, v}_{\cL^2} - \bracket{u, v}_{\cL^2} \\
				&= \int_{0}^t \bracket{A e^{sA}u, v}_{\cL^2} \, \d s \\
				&= -\int_{0}^t \bracket{\sqrt{-A} e^{sA}u, \sqrt{-A} v}_{\cL^2} \, \d s \\
				&= -\int_{0}^t \Ee^\a(u_s, v) \, \d s.
			\end{align*}
			Then the uniqueness of the solution in \eqref{eq.semigroupA} implies that $u_t = \tilde u_t$.  Equation \eqref{eq.L2Energy} follows from \eqref{eq.semigroupA} by testing with $v = u_t$, and it also implies that  $t \mapsto \Er[(u_t)^2]$ is decreasing.

			To prove that $t \mapsto \Ee^{\a}(u_t, u_t)$ is also decreasing, we use the spectral measure decomposition for the Dirichlet form
			\begin{align}\label{eq.Spectral}
				\Ee^{\a}(u_t, u_t) = \bracket{e^{tA} u, - A e^{tA} u}_{\cL^2} = \int_{\lambda = 0}^\infty \lambda e^{-2 \lambda t} \,  \d \bracket{E_\lambda u, u}_{\cL^2}. 
			\end{align}
			Here $E_{\lambda}$ is the spectral projection operator with respect to $-A$; see \cite[Lemma~1.3.2]{fukushima2010dirichlet}. We see that the quantity above is decreasing with respect to $t$, as announced. We put this property into \eqref{eq.L2Energy}, which gives 
			\begin{align*}
				(t-\tau)\Ee^\a (u_t, u_t) \leq \int_{\tau}^t  \Ee^\a (u_s, u_s) \, \d s \leq  \frac{1}{2}\norm{u_\tau}^2_{\cL^2}. 
			\end{align*}
			This shows \eqref{eq.L2Decay}. When $u \in \D(\Ee^\a)$, we have 
			\begin{align*}
				\Ee^{\a}(u, u) = \bracket{\sqrt{- A} u,  \sqrt{- A} u}_{\cL^2} = \int_{\lambda = 0}^\infty \lambda \,  \d \bracket{E_\lambda u, u}_{\cL^2} < + \infty. 
			\end{align*}
			Combining this with an application of the dominated convergence theorem in  \eqref{eq.Spectral}, we obtain that $\lim_{t \to 0}\Ee^{\a}(u_t, u_t)= \Ee^{\a}(u, u)$.
		\end{proof}

		\section{Two-scale expansion for elliptic equation}
		\label{sec:3}
		In this section, we take the Dirichlet problem of elliptic equations as a simple example to explain the two-scale expansion technique on the configuration space. Recall the homogenized matrix $\ab$ defined in \eqref{eq:AbarDef}, where we keep the dependence on $\rho$ implicit. Consider $U, \Ub \in \cH^1_0(\cu_m)$ centered solving
		\begin{align}\label{eq.defDirichlet}
			-\nabla \cdot (\a \nabla U) = {F}, \qquad -\nabla \cdot (\ab \nabla \Ub) =  F,
		\end{align}
		in the sense that for all $V \in \cH^1_0(\cu_m)$,
		\begin{align}
			\Er\Ll[\int_{\cu_m} \nabla V \cdot \a \nabla U \, \d \mu \Rr] &= \Er[V F],  \label{eq.Dirichlet1}\\
			\Er\Ll[\int_{\cu_m} \nabla V \cdot \ab \nabla \Ub \, \d \mu \Rr] &= \Er[V F].     \label{eq.Dirichlet2}       
		\end{align}
		We study the homogenization problem in the case when $F$ is a centered linear statistic, meaning that
		\begin{align}\label{eq.defF}
			F(\mu) := \int_{\cu_m} f \, \d \mu,  \qquad \text{ with } \qquad \int_{\cu_m} f \, \d m = 0.
		\end{align}
		Here $f \in L^2(\cu_m)$ is a function on Euclidean space $\Rd$ rather than the configuration space. This particular case is convenient because its homogenized solution has an explicit expression related to the homogenized solution in Euclidean space.
		
		\begin{lemma}
			Under the condition \eqref{eq.defF}, the unique solution of \eqref{eq.Dirichlet2} such that $\Er[\Ub] = 0$ is
			\begin{align}\label{eq.defUb}
				\Ub(\mu) = \int_{\cu_m} \ub \, \d \mu - \rho \int_{\cu_m} \ub \, \d m, 
			\end{align}
			with $\ub \in H^1_0(\cu_m)$ the unique solution of 
			\begin{equation}\label{eq.defub}
				\Ll\{\begin{array}{ll}
					- \nabla \cdot (\ab \nabla \ub) = f, & \text{ in } \cu_m, \\
					\ub = 0, & \text{ on } \partial \cu_m. 
				\end{array}\Rr. 
			\end{equation}
		\end{lemma}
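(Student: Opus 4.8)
\emph{Overall strategy.} The plan is to check directly that the explicit formula \eqref{eq.defUb} produces an element of $\cH^1_0(\cu_m)$ which has zero mean and satisfies the weak formulation \eqref{eq.Dirichlet2}, and then to argue that such a function is unique. The existence and uniqueness of $\ub \in H^1_0(\cu_m)$ solving \eqref{eq.defub} is the classical Lax--Milgram theorem for the uniformly elliptic constant-coefficient operator $-\nabla \cdot \ab \nabla$. The one elementary input I would isolate first is that, for any $g$, the centered linear statistic $G(\mu) := \int_{\cu_m} g \, \d \mu - \rho \int_{\cu_m} g \, \d m$ has, directly from \eqref{e.def.deriv}, configuration-space gradient equal to the \emph{deterministic} field $(\mu,x) \mapsto \nabla g(x)$, while $\Er[G] = 0$ and $\Er[G^2] = \rho \int_{\cu_m} g^2 \, \d m$ by the Poisson variance formula. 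Applying this to $g = \ub$ gives $\nabla \Ub(\mu, x) = \nabla \ub(x)$ and $\Er[\Ub] = 0$; approximating $\ub$ in $H^1_0(\cu_m)$ by $\ub^{(j)} \in C^\infty_c(\cu_m)$, the corresponding statistics lie in $\cC^\infty_c(\cu_m)$ and converge to $\Ub$ in $\cH^1(\cu_m)$, so $\Ub \in \cH^1_0(\cu_m)$.

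\emph{Verifying the weak formulation.} Since $V \mapsto \Er[\int_{\cu_m} \nabla V \cdot \ab \nabla \Ub \, \d \mu]$ and $V \mapsto \Er[VF]$ are both continuous linear functionals on $\cH^1_0(\cu_m)$ (using $\Ub \in \cH^1_0$ and $F \in \cL^2$), and $\cC^\infty_c(\cu_m)$ is dense, it suffices to prove \eqref{eq.Dirichlet2} for $V \in \cC^\infty_c(\cu_m)$, say $\mcl F_K$-measurable with $K \subset \cu_m$ compact. Using $\nabla \Ub(\mu,x) = \nabla \ub(x)$ and the Mecke (Campbell) formula for the Poisson point process, and differentiating under the expectation (legitimate for smooth cylinder functions), one gets
\begin{align*}
	\Er\Ll[\int_{\cu_m} \nabla V \cdot \ab \nabla \Ub \, \d \mu \Rr]
	&= \rho \int_{\cu_m} \Er\big[\nabla V(\mu + \delta_x, x)\big] \cdot \ab \nabla \ub(x) \, \d x \\
	&= \rho \int_{\cu_m} \nabla \bar V(x) \cdot \ab \nabla \ub(x) \, \d x,
\end{align*}
where $\bar V(x) := \Er[V(\mu + \delta_x)]$. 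Since $V$ depends only on $\mu \mres K$, one has $\bar V \equiv \Er[V]$ on $\cu_m \setminus K$, hence $\bar V_0 := \bar V - \Er[V] \in C^\infty_c(\cu_m) \subset H^1_0(\cu_m)$ and $\nabla \bar V = \nabla \bar V_0$. Testing the deterministic equation \eqref{eq.defub} against $\bar V_0$, and then using $\int_{\cu_m} f \, \d m = 0$ to reinsert the constant, yields $\rho \int_{\cu_m} \nabla \bar V \cdot \ab \nabla \ub \, \d x = \rho \int_{\cu_m} f \bar V_0 \, \d x = \rho \int_{\cu_m} f \bar V \, \d x$. A second application of the Mecke formula turns $\rho \int_{\cu_m} f(x) \Er[V(\mu+\delta_x)] \, \d x$ back into $\Er[\int_{\cu_m} f(x) V(\mu) \, \d \mu(x)] = \Er[VF]$, which is the right-hand side of \eqref{eq.Dirichlet2}.

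\emph{Uniqueness and the main obstacle.} If $\Ub_1, \Ub_2 \in \cH^1_0(\cu_m)$ both solve \eqref{eq.Dirichlet2} with zero mean, then $W := \Ub_1 - \Ub_2$ satisfies $\Er[\int_{\cu_m} \nabla W \cdot \ab \nabla W \, \d \mu] = 0$, so $\nabla W$ vanishes ($\d\mu\,\d\Pr$-a.e.) by the uniform ellipticity \eqref{e.ass.unif.ell}, and then Poincar\'e's inequality \eqref{e.Poincare} forces $W = \Er[W] = 0$. I expect the only genuinely delicate point to be the double passage through the Mecke formula, and relatedly the observation that $\bar V$ need not vanish on $\partial \cu_m$: this is exactly where the mean-zero hypothesis $\int_{\cu_m} f \, \d m = 0$ is consumed, allowing one to subtract the constant $\Er[V]$ and land back in $H^1_0(\cu_m)$ so that the deterministic weak formulation applies.
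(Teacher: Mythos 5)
Your proof is correct, and it follows the same basic strategy as the paper (reduce the configuration-space weak formulation to the deterministic weak formulation for $\ub$, and use $\int_{\cu_m} f\,\d m = 0$ to kill the constant that obstructs the Dirichlet boundary condition), but via a different technical vehicle. The paper expands $\Er[\,\cdot\,]$ over the Poisson distribution of the number of particles $k$, obtaining a projection $\tilde V_k(x_1,\dots,x_k)$, and for each coordinate $x_i$ subtracts the $(k-1)$-particle projection $\tilde V_{k-1}$ to produce an $H^1_0(\cu_m)$ function of $x_i$ to which \eqref{eq.defub} can be tested. You instead invoke the Mecke formula twice, which collapses the whole computation to the single averaged test function $\bar V(x) := \Er[V(\mu + \delta_x)]$; the boundary correction becomes a single subtraction of the constant $\Er[V]$, and the mean-zero condition on $f$ is used once to reinsert it. Both versions exploit exactly the same structural facts — that $\nabla \Ub$ is the deterministic field $\nabla \ub$, and that a function in $\cC^\infty_c(\cu_m)$ is insensitive to particles near $\partial\cu_m$, so the relevant deterministic functions are constant near the boundary. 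Your packaging avoids working with the explicit chaos decomposition and is arguably more transparent, at the cost of needing to justify the differentiation under the expectation in $\Er[\nabla V(\mu+\delta_x,x)] = \nabla \bar V(x)$ (unproblematic for cylinder functions $V$, as you note). You also supply a uniqueness argument via the energy identity and Poincar\'e, which the paper leaves implicit.
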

		\begin{proof} It is clear that $\Ub \in \cH^1_0(\cu_m)$ as $\ub \in H^1_0(\cu_m)$. If we test \eqref{eq.Dirichlet2} with some $V \in \cH^1_0(\cu_m)$, then its {\lhs} is 
			\begin{multline}\label{eq.IPPLHS}
				\Er\Ll[\int_{\cu_m} \nabla V \cdot \ab \nabla \Ub \, \d \mu \Rr] \\
				= e^{-\rho \vert \cu_m \vert} \sum_{k=1}^{\infty} \frac{(\rho \vert \cu_m \vert)^k}{k!} \fint_{(\cu_m)^k} \sum_{i=1}^k \nabla_{x_i} \tilde{V}_k(x_1, \cdots, x_k) \cdot \ab \nabla \ub(x_i) \, \d x_1 \cdots \d x_k,
			\end{multline}
			where $\tilde{V}_k(x_1, \cdots, x_k) := V(\sum_{i=1}^k \delta_{x_i})$ is the projection of the function conditioned on the number of particles. We treat the integration with respect to $x_i$ for each term 
			\begin{equation}\label{eq.IPP}
				\begin{split}
					&\int_{\cu_m}  \nabla_{x_i} \tilde{V}_k(x_1, \cdots, x_k) \cdot \ab \nabla \ub(x_i) \, \d x_i \\
					&= \int_{\cu_m}   \nabla_{x_i} \Ll(\tilde{V}_k(x_1, \cdots, x_k) - \tilde{V}_{k-1}(x_1, \cdots, x_{i-1},  x_{i+1}, \cdots, x_k) \Rr) \cdot \ab \nabla \ub(x_i)  \, \d x_i \\
					&=  \int_{\cu_m}   \Ll(\tilde{V}_k(x_1, \cdots, x_k) - \tilde{V}_{k-1}(x_1, \cdots, x_{i-1},  x_{i+1}, \cdots, x_k) \Rr) f(x_i)  \, \d x_i \\
					&=  \int_{\cu_m}   \tilde{V}_k(x_1, \cdots, x_k)  f(x_i)  \, \d x_i. \\
				\end{split}
			\end{equation}	
			The equality between the expressions in the second and third lines is valid by the definition of $\bar u$ in \eqref{eq.defub} and since the mapping
			\begin{align*}
				x_i \mapsto \Ll(\tilde{V}_k(x_1, \cdots, x_k) - \tilde{V}_{k-1}(x_1, \cdots, x_{i-1},  x_{i+1}, \cdots, x_k) \Rr)
			\end{align*}
			is in $H^1_0(\cu_m)$ (see for instance \cite[(3.8)]{giunti2021smoothness}). From the third line to the fourth line, we make use of the property $\int_{\cu_m} f \, \d m = 0$ in \eqref{eq.defF} and $\tilde{V}_{k-1}(x_1, \cdots, x_{i-1},  x_{i+1}, \cdots, x_k)$ constant for $x_i \in \cu_m$. We put this back into \eqref{eq.IPPLHS} and obtain that 
			\begin{align*}
				&\Er\Ll[\int_{\cu_m} \nabla V \cdot \ab \nabla \Ub \, \d \mu \Rr] \\
				&= e^{-\rho \vert \cu_m \vert} \sum_{k=1}^{\infty} \frac{(\rho \vert \cu_m \vert)^k}{k!} \fint_{(\cu_m)^k} \tilde{V}_k(x_1, \cdots, x_k) \sum_{i=1}^k f(x_i) \, \d x_1 \cdots \d x_k \\
				&= \Er[V F]. 
			\end{align*}
			Here we remark that the case $\mu(\cu_m) = 0$ does not contribute in the definition of $F$ in~\eqref{eq.defF}. This finishes the proof.
		\end{proof}
		
		\bigskip

		Now, with the explicit expression \eqref{eq.defUb}, we propose the following two-scale expansion 
		\begin{align}\label{eq.defW}
			W := \Ub + \sum_{i=1}^d \sum_{z \in \Z_{m,n}} (\partial_i \ub)_{z+\cu_n} \phi_{e_i, z+\cu_n},
		\end{align}
		to approximate both $U$ and $\Ub$, i.e. $W \simeq U \simeq \Ub$ in $\cL^2$, and also with $W \simeq U$ in $\cH^1(\cu_m)$. The integer $n \le m$ will be chosen shortly depending on $m$. We recall that $\Z_{m,n} = 3^n \Zd \cap \cu_m$ as in~\eqref{e.Z.def} and $\phi_{e_i, z+\cu_n}$ is the $\cH^1_0(z+\cu_n)$ corrector defined in \eqref{eq.corrector}, and we define $(\partial_i \ub)_{z+\cu_n}$ as 
		\begin{align*}
			(\partial_i \ub)_{z+\cu_n} := \fint_{z+\cu_n} \partial_i \ub,
		\end{align*}
		which is a deterministic constant. Recall that $\ell_{e_i, z+\cu_n} + \phi_{e_i, z+\cu_n}$ is $\a$-harmonic on the domain $z+\cu_n$ by~\eqref{eq:VariationalFormula72} and~\eqref{eq.corrector}. We use $(\partial_i \ub)_{z+\cu_n}$ to represent the slope in the mesoscopic domain $z+\cu_n$, and use $ (\partial_i \ub)_{z+\cu_n} \phi_{e_i, z+\cu_n}$  to correct the solution in $z+\cu_n$. The following result can be seen as a generalization of two-scale expansion on the configuration space. Notice that the $\cH^1$ estimate and Poincar\'e inequality on \eqref{eq.defDirichlet} imply that 
		\begin{align}\label{eq.ClassicalH1}
			\norm{U}_{\acH^1(\cu_m)} \leq C 3^m \norm{F}_{\cL^2}, \qquad \norm{\Ub}_{\acH^1(\cu_m)} \leq C 3^m \norm{F}_{\cL^2},
		\end{align}
		so the error term in the following result is smaller than the bound above on large scales.
		
		\begin{proposition}[Two-scale expansion of elliptic equation]\label{prop.TwoScaleElliptic}
			The following holds for the exponent $\alpha(d,\lambda,\rho) > 0$  given by Lemma~\ref{lem.JBound} and for some constant $C(d,\lambda,\rho) < +\infty$. 
			Under the condition \eqref{eq.defF}, with the two-scale expansion \eqref{eq.defW} associated with \eqref{eq.Dirichlet1}, \eqref{eq.Dirichlet2} and with the choice of $n := \lfloor \frac {m} {1+\alpha} \rfloor$, we have
			\begin{align}\label{eq.TwoScaleElliptic}
				\norm{W - U}_{\acH^1(\cu_m)} + 3^{-m} \Ll(\norm{W-\Ub}_{\cL^2} + \norm{U-\Ub}_{\cL^2}\Rr)	\leq C 3^{\frac{m}{1+\alpha}} \norm{F}_{\cL^2}.
			\end{align}
		\end{proposition}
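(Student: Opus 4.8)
The plan is to run the classical two-scale expansion argument on the configuration space, using the \emph{dual} corrector $\phi^*$ from \eqref{eq.corrector} as a substitute for the divergence-form flux corrector of Euclidean stochastic homogenization, and Lemma~\ref{lem.JBound} together with \eqref{eq:QuantConvergence} to quantify the error. First, some reductions. Since $\Ub$ is the linear statistic \eqref{eq.defUb}, its configuration-space gradient is the \emph{deterministic} field $\nabla\Ub(\mu,x)=\nabla\ub(x)$, so $\ab\nabla\Ub=\ab\nabla\ub$; and on each mesoscopic cube $z+\cu_n$ one has $\nabla W(\mu,\cdot)=\nabla\ub+\sum_{i}(\partial_i\ub)_{z+\cu_n}\nabla\phi_{e_i,z+\cu_n}$, because $\phi_{e_i,z'+\cu_n}\in\cH^1_0(z'+\cu_n)$ has vanishing gradient off $z'+\cu_n$. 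Since $3^{m-n}$ is odd, the cubes $\{z+\cu_n:z\in\Z_{m,n}\}$ tile $\cu_m$ exactly, whence $\phi_{e_i,z+\cu_n}\in\cH^1_0(z+\cu_n)\subset\cH^1_0(\cu_m)$ and $W-U\in\cH^1_0(\cu_m)$; in particular, no boundary-layer correction is needed, in contrast with the Euclidean Dirichlet problem. We normalise the mesoscopic correctors to have vanishing expectation (this changes them only by deterministic constants, hence affects neither their gradients nor the use of $W-U$ as a test function, but ensures $\Er[W]=\Er[\Ub]=0$). Testing \eqref{eq.Dirichlet1} and \eqref{eq.Dirichlet2} against $V=W-U$ gives $\Er[\int_{\cu_m}\nabla(W-U)\cdot\a\nabla U\,\d\mu]=\Er[(W-U)F]=\Er[\int_{\cu_m}\nabla(W-U)\cdot\ab\nabla\Ub\,\d\mu]$, so with $E^2:=\Er[\int_{\cu_m}|\nabla(W-U)|^2\,\d\mu]$ and uniform ellipticity,
\[
E^2\le\Er\Big[\int_{\cu_m}\nabla(W-U)\cdot\big(\a\nabla W-\ab\nabla\Ub\big)\,\d\mu\Big].
\]

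\emph{Energy estimate.} On $z+\cu_n$, using $\nabla(\ell_{e_i,z+\cu_n}+\phi^*_{e_i,z+\cu_n})=e_i+\nabla\phi^*_{e_i,z+\cu_n}$ and $\sum_i(\partial_i\ub)_{z+\cu_n}e_i=(\nabla\ub)_{z+\cu_n}$, one decomposes
\[
\a\nabla W-\ab\nabla\Ub=(\a-\ab)\big(\nabla\ub-(\nabla\ub)_{z+\cu_n}\big)+\big(\ab_\ast(\cu_n,\rho)-\ab\big)(\nabla\ub)_{z+\cu_n}+T^{\mathrm{flux}}_z+T^{\mathrm{corr}}_z,
\]
where $T^{\mathrm{flux}}_z=\sum_i(\partial_i\ub)_{z+\cu_n}\big(\a\nabla(\ell_{e_i,z+\cu_n}+\phi^*_{e_i,z+\cu_n})-\ab_\ast(\cu_n,\rho)e_i\big)$ and $T^{\mathrm{corr}}_z=\sum_i(\partial_i\ub)_{z+\cu_n}\a\big(\nabla\phi_{e_i,z+\cu_n}-\nabla\phi^*_{e_i,z+\cu_n}\big)$. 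Pair with $\nabla(W-U)$, sum over $z\in\Z_{m,n}$, and take $\Er$. The contribution of $T^{\mathrm{flux}}_z$ \emph{vanishes}: indeed $W-U\in\cH^1_0(\cu_m)\subset\cH^1(z+\cu_n)$, and the second line of \eqref{eq:VariationalFormula72} (the first-order condition for the dual corrector, after subtracting the affine part, and by stationarity transported from $\cu_n$ to $z+\cu_n$) gives $\Er[\int_{z+\cu_n}\nabla u'\cdot(\a\nabla(\ell_{e_i,z+\cu_n}+\phi^*_{e_i,z+\cu_n})-\ab_\ast(\cu_n,\rho)e_i)\,\d\mu]=0$ for every $u'\in\cH^1(z+\cu_n)$. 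For the other three terms, Cauchy--Schwarz reduces matters to their $L^2(\d\mu)$-norms, where $z(x)\in\Z_{m,n}$ denotes the centre of the cube containing $x$: (i) $\|(\a-\ab)(\nabla\ub-(\nabla\ub)_{z(\cdot)+\cu_n})\|_{L^2(\d\mu)}\le C\rho^{1/2}3^{n}\|D^2\ub\|_{L^2}\le C3^{n}\|F\|_{\cL^2}$, by the cube-wise Poincar\'e inequality, the (scale-invariant) $H^2$ bound $\|D^2\ub\|_{L^2}\le C\|f\|_{L^2}$ on the convex domain $\cu_m$, and $\|F\|_{\cL^2}^2=\rho\|f\|_{L^2}^2$; (ii) $\|(\ab_\ast(\cu_n,\rho)-\ab)(\nabla\ub)_{z(\cdot)+\cu_n}\|_{L^2(\d\mu)}\le C3^{-\alpha n}\rho^{1/2}\|\nabla\ub\|_{L^2}\le C3^{m-\alpha n}\|F\|_{\cL^2}$, by \eqref{eq:QuantConvergence} and the a priori bound $\|\nabla\ub\|_{L^2}\le C3^m\|f\|_{L^2}$ underlying \eqref{eq.ClassicalH1}; (iii) $\|T^{\mathrm{corr}}_{z(\cdot)}\|_{L^2(\d\mu)}\le C3^{-\alpha n}\rho^{1/2}\|\nabla\ub\|_{L^2}\le C3^{m-\alpha n}\|F\|_{\cL^2}$, by Lemma~\ref{lem.JBound}(2) and $|(\nabla\ub)_{z+\cu_n}|^2\le\fint_{z+\cu_n}|\nabla\ub|^2$. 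Altogether $E^2\le C(3^{n}+3^{m-\alpha n})\|F\|_{\cL^2}\,E$, hence $E\le C(3^{n}+3^{m-\alpha n})\|F\|_{\cL^2}$; the choice $n=\lfloor m/(1+\alpha)\rfloor$ balances the exponents and yields $E\le C3^{m/(1+\alpha)}\|F\|_{\cL^2}$.

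\emph{The $\cL^2$ estimates.} As $W-U\in\cH^1_0(\cu_m)$ has zero mean, Poincar\'e's inequality \eqref{e.Poincare} gives $\|W-U\|_{\cL^2}\le C3^mE\le C3^{m+m/(1+\alpha)}\|F\|_{\cL^2}$, which together with $E$ controls $\|W-U\|_{\acH^1(\cu_m)}\le C3^{m/(1+\alpha)}\|F\|_{\cL^2}$. For $W-\Ub=\sum_{i,z}(\partial_i\ub)_{z+\cu_n}\phi_{e_i,z+\cu_n}$, the summands over distinct $z\in\Z_{m,n}$ are independent and centred by Lemma~\ref{lem.JBound}(1), so by Poincar\'e on each $\cH^1_0(z+\cu_n)$ and the corrector energy bound $\Er[\int_{\cu_n}|\nabla\phi_{e_i,\cu_n}|^2\,\d\mu]\le C\rho|\cu_n|$ (a consequence of \eqref{eq:defNu}, \eqref{eq:MatrixEq} and \eqref{e.ass.unif.ell}),
\[
\|W-\Ub\|_{\cL^2}^2=\sum_z\Er\Big[\Big(\sum_i(\partial_i\ub)_{z+\cu_n}\phi_{e_i,z+\cu_n}\Big)^2\Big]\le C3^{2n}\rho\sum_z|\cu_n|\,|(\nabla\ub)_{z+\cu_n}|^2\le C3^{2n}\rho\|\nabla\ub\|_{L^2}^2\le C3^{2(n+m)}\|F\|_{\cL^2}^2,
\]
so $3^{-m}\|W-\Ub\|_{\cL^2}\le C3^{n}\|F\|_{\cL^2}=C3^{m/(1+\alpha)}\|F\|_{\cL^2}$. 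Finally $\|U-\Ub\|_{\cL^2}\le\|U-W\|_{\cL^2}+\|W-\Ub\|_{\cL^2}$, and collecting the three bounds gives \eqref{eq.TwoScaleElliptic}.

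\emph{Expected main obstacle.} The conceptual crux — and the place where the configuration-space setting genuinely diverges from, yet parallels, the Euclidean one — is the exact cancellation of $T^{\mathrm{flux}}_z$: the dual-corrector variational identity makes the flux $\a\nabla(\ell_{e_i,z+\cu_n}+\phi^*_{e_i,z+\cu_n})$ coincide with the constant $\ab_\ast(\cu_n,\rho)e_i$ when tested against \emph{every} element of $\cH^1(z+\cu_n)$, which plays the role of the skew-symmetric flux corrector $\sigma$ and dispenses with an integration by parts; after that, only the homogenization rate \eqref{eq:QuantConvergence} and the $\phi$–$\phi^*$ comparison of Lemma~\ref{lem.JBound}(2) are needed. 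The remaining difficulty is purely organisational: one must cleanly separate the deterministic Euclidean data ($\ub$, $(\partial_i\ub)_{z+\cu_n}$, $\ab$, $\ab_\ast(\cu_n,\rho)$) from genuine configuration-space functions, pick the correct additive normalisation of the mesoscopic correctors so that $W$ is centred, and verify the compatibility of the $\cH^1_0$-type mesoscopic boundary conditions with $\cH^1_0(\cu_m)$; the analytic inputs ($H^2$-regularity of $\ub$, cube-wise Poincar\'e, the corrector energy bound and independence) are standard.
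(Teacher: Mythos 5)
Your proposal is correct and follows essentially the same route as the paper: the same two-scale ansatz $W$, the same decomposition of the residual flux $\a\nabla W-\ab\nabla\Ub$ into a slope-fixing term (controlled by cube-wise Poincar\'e and the scale-invariant $H^2$ estimate for $\ub$), a $\phi$--$\phi^*$ comparison term (controlled by Lemma~\ref{lem.JBound}(2)), and a dual-corrector term killed by the variational identity \eqref{eq:VariationalFormula72} and then replaced by $(\ab_\ast-\ab)$ which is estimated via \eqref{eq:QuantConvergence}, followed by the same independence argument for the $\cL^2$ bound on $W-\Ub$. You are a bit more careful than the paper's exposition in stating explicitly the additive re-centering of the mesoscopic correctors (needed so that the Poincar\'e/$\acH^1$ step and the $\cL^2$-independence step are clean) and in supplying the triangle-inequality deduction of the $\|U-\Ub\|_{\cL^2}$ bound, but the mathematical content is identical.
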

		\begin{proof} 			
			\textit{Step~1: estimate of $\nabla (W - U)$.} The main step of the proposition is to study $\nabla (W - U)$, because the two-scale expansion provides $\cH^1$ approximation for $U$. Using \eqref{eq.defDirichlet}, we take any $V \in \cH^1_0(\cu_m)$ and study 
			\begin{equation}\label{eq.ellipticDecom}
				\begin{split}
					\Er\Ll[\int_{\cu_m} \nabla V \cdot \a \nabla (W-U) \, \d \mu\Rr] &=\Er\Ll[\int_{\cu_m} \nabla V \cdot (\a \nabla W - \ab \nabla \Ub) \, \d \mu\Rr] \\
					&= \mathbf{I} + \mathbf{II},
				\end{split}
			\end{equation}
			which we decompose as the sum of the following terms 
			\begin{align*}
				\mathbf{I} &:= \sum_{z \in \Z_{m,n}}  \Er\Ll[\int_{z+\cu_n} \nabla V \cdot (\a-\ab)(\nabla \ub - (\nabla \ub)_{z+\cu_n}) \, \d \mu\Rr], \\
				\mathbf{II} &:= \sum_{i = 1}^d \sum_{z \in \Z_{m,n}}  (\partial_i \ub)_{z+\cu_n} \Er\Ll[\int_{z+\cu_n} \nabla V \cdot \Ll(\a(e_i + \nabla \phi_{e_i, z+\cu_n}) - \ab e_i\Rr) \, \d \mu\Rr].
			\end{align*}
			This decomposition is even simpler than the classical two-scale expansion in $\Rd$, because here the factor $(\partial_i \ub)_{z+\cu_n}$ is totally deterministic and no derivatives act on it. On the other hand, the corrector $\phi_{e_i, z+\cu_n}$ already contains all the influence from the interaction of particles in $z+\cu_n$. Here the term $\mathbf{I}$ can be seen as the error to fix the slope in the mesoscopic scale $3^n$, while the term $\mathbf{II}$ is the error of the flux replacement. We estimate them separately.
			
			\smallskip
			\textit{Step~2.1: term $\mathbf{I}$ as the error to fix the slope.} For the term $\mathbf{I}$, we apply the Cauchy--Schwarz inequality to get that
			\begin{align*}
				\vert \mathbf{I} \vert &\leq \Lambda\norm{V}_{\acH^1(\cu_m)}  \Ll(\sum_{z \in \Z_{m,n}}\Er\Ll[\int_{z+\cu_n} (\nabla \ub - (\nabla \ub)_{z+\cu_n})^2 \, \d \mu\Rr]\Rr)^{\frac{1}{2}}.
			\end{align*}
			Using Poincar\'e's inequality, we have
			\begin{equation}\label{eq.TermI}
				\begin{split}
					\sum_{z \in \Z_{m,n}}\Er\Ll[\int_{z+\cu_n} (\nabla \ub - (\nabla \ub)_{z+\cu_n})^2 \, \d \mu\Rr] &\leq  C 3^{2n} \rho \sum_{z \in \Z_{m,n}} \int_{z+\cu_n} \vert\nabla \nabla \ub\vert^2 \, \d m  \\
					&= C 3^{2n} \rho \int_{\cu_m} \vert\nabla \nabla \ub\vert^2 \, \d m.
				\end{split}
			\end{equation}

			\smallskip
			\textit{Step~2.2: term $\mathbf{II}$ as the error of flux replacement.} For the term $\mathbf{II}$, we would like to apply the flux replacement as the slope is already fixed by $(\partial_i \ub)_{z+\cu_n}$. However, as $V$ is not a local function belonging to $\cH^1_0(z+\cu_n)$, some manipulation is required to test against it. We replace $\phi_{e_i, z+\cu_n}$ by the dual corrector $\phi^*_{e_i, z+\cu_n}$ associated to the dual quantity $\nu^*(z+\cu, \ab_*(z+\cu_n) e_i)$, since $\phi^*_{e_i, z+\cu_n}$ admits more test functions from \eqref{eq:VariationalFormula72}.
			\begin{align*}
				\mathbf{II} &= \mathbf{II.1} + \mathbf{II.2},\\
				\mathbf{II.1} &:= \sum_{i = 1}^d \sum_{z \in \Z_{m,n}}  (\partial_i \ub)_{z+\cu_n} \Er\Ll[\int_{z+\cu_n} \nabla V \cdot \a\Ll( \nabla \phi_{e_i, z+\cu_n} - \nabla \phi^*_{e_i, z+\cu_n}\Rr) \, \d \mu\Rr],\\
				\mathbf{II.2} &:= \sum_{i = 1}^d \sum_{z \in \Z_{m,n}}  (\partial_i \ub)_{z+\cu_n} \Er\Ll[\int_{z+\cu_n} \nabla V \cdot \Ll(\a(e_i + \nabla \phi^*_{e_i, z+\cu_n}) - \ab e_i\Rr) \, \d \mu\Rr].
			\end{align*}
			For the term $\mathbf{II.1}$, we use the Cauchy--Schwarz inequality, which gives us
			\begin{equation}\label{eq.TermII1}
				\begin{split}
					\vert \mathbf{II.1}\vert &\leq C \norm{V}_{\acH^1(\cu_m)}\Ll(\sum_{i = 1}^d \sum_{z \in \Z_{m,n}}  \vert (\partial_i \ub)_{z+\cu_n} \vert^2  \Er\Ll[\int_{z+\cu_n} \vert \nabla \phi_{e_i, z+\cu_n} - \nabla \phi^*_{e_i, z+\cu_n} \vert^2 \, \d \mu\Rr]\Rr)^{\frac{1}{2}}\\
					&\leq C 3^{-\alpha n} \norm{V}_{\acH^1(\cu_m)}\Ll(\sum_{i = 1}^d \sum_{z \in \Z_{m,n}}  \rho  \vert \cu_n \vert\vert (\partial_i \ub)_{z+\cu_n} \vert^2 \Rr)^{\frac{1}{2}}\\
					&\leq C 3^{-\alpha n} \norm{V}_{\acH^1(\cu_m)} \norm{\Ub}_{\acH^1(\cu_m)}
				\end{split}
			\end{equation}
			Here the error from $\nabla (\phi_{e_i, z+\cu_n} - \phi^*_{e_i, z+\cu_n})$ 
			could be bounded by employing Lemma~\ref{lem.JBound}.

			For the term $\mathbf{II.2}$, we use the variational formula~\eqref{eq:VariationalFormula72} for the maximizer of $\nu^*(z+\cu, \ab_*(z+\cu_n) e_i)$ to obtain
			\begin{align*}
				\Er\Ll[\int_{z+\cu_n} \nabla V \cdot \a (e_i + \nabla \phi^*_{e_i, z+\cu_n}) \, \d \mu\Rr] = \Er\Ll[\int_{z+\cu_n} \nabla V \cdot \ab_*(z+\cu_n) e_i  \, \d \mu\Rr].
			\end{align*}
			Then we obtain that 
			\begin{equation}\label{eq.TermII2}
				\begin{split}
					\vert \mathbf{II.2}\vert &= \Ll\vert\sum_{i = 1}^d \sum_{z \in \Z_{m,n}}  (\partial_i \ub)_{z+\cu_n} \Er\Ll[\int_{z+\cu_n} \nabla V \cdot (\ab_*(z+\cu_n) - \ab) e_i \, \d \mu\Rr]\Rr\vert\\
					&\leq C 3^{-\alpha n} \norm{V}_{\acH^1(\cu_m)} \norm{\Ub}_{\acH^1(\cu_m)},
				\end{split}
			\end{equation}
			because \cite[Theorem~2.1]{bulk} implies that $\vert \ab_*(z+\cu_n) - \ab \vert \leq C3^{-\alpha n}$, as recalled in \eqref{eq:QuantConvergence}.
			
			\smallskip
			\textit{Step~2.3: choice of parameters.}
			Combining the estimates \eqref{eq.TermI}, \eqref{eq.TermII1}, and \eqref{eq.TermII2}, we obtain 
			\begin{multline}\label{eq.TwoScaleMixed}
				\Ll\vert \Er\Ll[\int_{\cu_m} \nabla V \cdot \a \nabla (W-U) \, \d \mu\Rr]\Rr\vert  \\
				\leq C \norm{V}_{\acH^1(\cu_m)} \Ll( \rho  \int_{\cu_m} 3^{2n} \vert\nabla \nabla \ub\vert^2  + 3^{-2\alpha n} \vert \nabla \ub\vert^2 \, \d m \Rr)^\frac{1}{2}.
			\end{multline}
			In order to avoid confusion, here we write down all the integration with respect to $\ub$ explicitly. Then we apply $H^1$ and $H^2$ estimates for $\ub$ defined in \eqref{eq.defub}
			\begin{align*}
				\rho  \int_{\cu_m} 3^{2n} \vert\nabla \nabla \ub\vert^2  + 3^{-2\alpha n} \vert \nabla \ub\vert^2 \, \d m &\leq  C (3^{2n} + 3^{2m-2\alpha n}) \rho  \int_{\cu_m} f^2 \, \d m\\
				&= C (3^{2n} + 3^{2m-2\alpha n}) \norm{F}^2_{\cL^2}.
			\end{align*} 
			The validity of the $H^2$ estimate for $\ub$ is asserted for general convex domains in \cite[Theorem~3.1.2.1]{grisvard1985elliptic} (notice that the constant $C(\Omega)$ appearing there only depends on the diameter of $\Omega$); in the particular case of cubes we can also more directly adapt the proof of \cite[Lemma~B.19]{AKMbook} to the case of a Dirichlet boundary condition (the extension of the function $f$ appearing in this proof can be taken as $f(x_1,\ldots, x_d) := \mathrm{sgn}(x_1) \cdots \mathrm{sgn}(x_d) f(|x_1|,\ldots, |x_d|)$ in this case). 
			With the choice of $n = \lfloor \frac{m}{1+\alpha} \rfloor$, we obtain that 
			\begin{align*}
				\Ll\vert \Er\Ll[\int_{\cu_m} \nabla V \cdot \a \nabla (W-U) \, \d \mu\Rr] \Rr\vert \leq C 3^{\frac{m}{1+\alpha}} \norm{V}_{\acH^1}(\cu_m) \norm{F}_{\cL^2(\cu_m)}. 
			\end{align*}
			Viewing $W$ as the sum of $\Ub$ and a linear combination of some $\cH^1_0(\cu_m)$ functions,  it also belongs to $\cH^1_0(\cu_m)$. We choose $V = W-U \in \cH^1_0(\cu_m)$ and use the Poincar\'e inequality \eqref{e.Poincare} and the uniform ellipticity condition in \eqref{e.ass.unif.ell} to obtain that
			\begin{align}\label{eq.TwoScaleH-1}
				\|W-U\|_{\acH^1(\cu_m)} 
				\leq C 3^{\frac{m}{1+\alpha}} \norm{F}_{\cL^2(\cu_m)}. 
			\end{align}

			\smallskip
			\textit{Step 3: estimate of $\norm{W-\Ub}_{\cL^2}$.} The $L^2$  estimate of $\norm{W-\Ub}^2_{\cL^2}$  can be done directly
			\begin{align*}
				\norm{W-\Ub}^2_{\cL^2} &=  \Er\Ll[\Ll(\sum_{i=1}^d \sum_{z \in \Z_{m,n}} (\partial_i \ub)_{z+\cu_n} \phi_{e_i, z+\cu_n}\Rr)^2\Rr]\\
				&\leq  C \sum_{i=1}^d \sum_{z \in \Z_{m,n}} (\partial_i \ub)^2_{z+\cu_n} \Er\Ll[ \phi^2_{e_i, z+\cu_n}\Rr] \\
				&\leq  C 3^{2n} \sum_{i=1}^d \sum_{z \in \Z_{m,n}} (\partial_i \ub)^2_{z+\cu_n}  \Er\Ll[ \int_{z+\cu_n} \vert \nabla \phi_{e_i, z+\cu_n}\vert^2 \, \d \mu\Rr]\\
				&\leq C 3^{2n} \norm{\Ub}^2_{\acH^1(\cu_m)}.
			\end{align*}
			Here in the second line we use the fact that $\phi_{e_i, z+\cu_n}$ is local function,  thus for $\dist(z, z') > 3^n$, the associated correctors are independent; see (1) of Lemma~\ref{lem.JBound}. Then we use the basic $\cH^1$ estimate of $\Ub$ and Poincar\'e's inequality to conclude
			\begin{align}\label{eq.TwoScaleL2}
				3^{-m} \norm{W-\Ub}_{\cL^2} \leq C 3^{n} \norm{F}_{\cL^2}.
			\end{align}
			Recalling the choice of $n = \lfloor \frac{m}{1+\alpha} \rfloor$, we obtain the desired estimate.
		\end{proof}
		\begin{remark}
			One can also study the homogenization of elliptic equation \eqref{eq.defDirichlet} without the assumption \eqref{eq.defF}, and this will be an interesting question for future work. 
		\end{remark}
		
		\section{Quantitative estimate of two-point function and semigroup}
		\label{sec:4}
		In this section, we use a parabolic two-scale expansion to prove Theorem~\ref{t.two.point}. Recall that $P_t$ is the parabolic semigroup associated with $(\Ee^\a, \D(\Ee^\a))$ defined in \eqref{eq.defA}, and let $\Pb_t$ be the one associated to $(\Ee^\ab, \D(\Ee^\ab))$ defined similarly. Theorem~\ref{t.two.point} is also related to the homogenization of the parabolic semigroup.
		\begin{proposition}\label{prop.HomoSemigroup}
			There exists an exponent $\beta(d,\Lambda,\rho) > 0$ and  $C(d,\Lambda,\rho) < + \infty$ such that for every $f \in L^1(\Rd) \cap L^2(\Rd)$, its associated linear statistic $F(\mu) := \int_{\Rd} f \, \d \mu$ satisfies, for every $t > 0$, 
			\begin{align}\label{eq.HomoSemigroup}
				\norm{(P_t  - \Pb_t)F}_{\cL^2} \leq C t^{- \frac{\beta}{2}} \norm{f}_{L^2}.
			\end{align}
		\end{proposition}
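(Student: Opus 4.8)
The plan is to prove a parabolic counterpart of Proposition~\ref{prop.TwoScaleElliptic}, and the statement \eqref{eq.HomoSemigroup} is exactly the input needed for Theorem~\ref{t.two.point} via the identity $\E_\rho\EEbmu[Y_t(f)Y_s(g)]=\bracket{G-\rho\!\int g,\ P_{t-s}F}_{\cL^2}$.

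\emph{Step 1: reduction and the explicit homogenized flow.} Both semigroups fix constants, so we may subtract $\rho\int_\Rd f\,\d m$ from $F$ and assume $F$ centered; moreover the bound is trivial when $t$ is bounded, since $\|(P_t-\Pb_t)F\|_{\cL^2}\le 2\|F\|_{\cL^2}=2\sqrt\rho\,\|f\|_{L^2}$, so we take $t$ large. By the same computation as in the lemma preceding Proposition~\ref{prop.TwoScaleElliptic}, the centered $\Ub_t:=\Pb_t F$ is the linear statistic $\Ub_t=\int_\Rd\ub_t\,\d\mu-\rho\int_\Rd\ub_t\,\d m$, with $\ub_t:=\Psi_t*f$ the Euclidean heat flow for $\partial_t-\tfrac12\nabla\cdot\ab\nabla$. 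Since $\ab$ is a constant uniformly elliptic matrix this semigroup is Fourier‑diagonal, so for all $j,k\ge 0$ one has the decaying bounds $\|\nabla^j\partial_t^k\ub_t\|_{L^2(\Rd)}\le C\,t^{-j/2-k}\|f\|_{L^2(\Rd)}$, uniformly in $f\in L^2(\Rd)$; these derivative estimates — valid even though $\|\ub_t\|_{L^2}$ need not decay — are what drive the convergence.

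\emph{Step 2: parabolic two-scale expansion.} For a mesoscale $n$ to be tuned, set, in analogy with~\eqref{eq.defW},
\[
W_t:=\Ub_t+\sum_{i=1}^{d}\sum_{z\in\Z_n}(\partial_i\ub_t)_{z+\cu_n}\,\phi_{e_i,z+\cu_n},\qquad (\partial_i\ub_t)_{z+\cu_n}:=\fint_{z+\cu_n}\partial_i\ub_t,
\]
with $\phi_{e_i,z+\cu_n}\in\cH^1_0(z+\cu_n)$ as in~\eqref{eq.corrector}; the sum converges in $\cH^1(\Rd)$ since $\sum_z|\cu_n|\,|(\partial_i\ub_t)_{z+\cu_n}|^2\le\|\nabla\ub_t\|_{L^2}^2$ and, by~\eqref{e.Poincare} and the corrector energy bound, $\Er[\phi_{e_i,z+\cu_n}^2]\le C3^{2n}\Er[\int_{z+\cu_n}|\nabla\phi_{e_i,z+\cu_n}|^2\d\mu]$, so in particular $\|W_t-\Ub_t\|_{\cL^2}\le C\sqrt\rho\,3^n\|\nabla\ub_t\|_{L^2}\le C\sqrt\rho\,3^n t^{-1/2}\|f\|_{L^2}$. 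The error $E_t:=W_t-P_tF$ solves weakly $\partial_t E_t=AE_t+R_t$ where, using $\langle\partial_t\Ub_t,V\rangle+\Ee^\ab(\Ub_t,V)=0$,
\[
\langle R_t,V\rangle=\Bigl(\Ee^\a(W_t,V)-\Ee^\ab(\Ub_t,V)\Bigr)+\Bigl\langle\,\textstyle\sum_{i,z}(\partial_i\partial_t\ub_t)_{z+\cu_n}\,\phi_{e_i,z+\cu_n}\,,\,V\Bigr\rangle_{\cL^2}.
\]
The first bracket is the \emph{spatial} error: it decomposes exactly as the terms $\mathbf I$, $\mathbf{II.1}$, $\mathbf{II.2}$ of the proof of Proposition~\ref{prop.TwoScaleElliptic} — using the $H^1$ and $H^2$ bounds on $\ub_t$ from Step~1, Lemma~\ref{lem.JBound}(2), and~\eqref{eq:QuantConvergence} — and is a divergence-form functional of $\cL^2$-size $\lesssim\sqrt\rho\,(3^n\|\nabla^2\ub_t\|_{L^2}+3^{-\alpha n}\|\nabla\ub_t\|_{L^2})\lesssim\sqrt\rho(3^n t^{-1}+3^{-\alpha n}t^{-1/2})\|f\|_{L^2}$. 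The second bracket is the new, genuinely parabolic term, of $\cL^2$-size $\lesssim\sqrt\rho\,3^n\|\nabla\partial_t\ub_t\|_{L^2}\lesssim\sqrt\rho\,3^n t^{-3/2}\|f\|_{L^2}$. Since there is no Poincar\'e inequality on $\Rd$, rather than an energy estimate we use the Duhamel representation $E_t=e^{(t-\tau_0)A}E_{\tau_0}+\int_{\tau_0}^t e^{(t-s)A}R_s\,\d s$ together with the contraction $\|e^{\sigma A}h\|_{\cL^2}\le\|h\|_{\cL^2}$ and the smoothing bound $\|\nabla e^{\sigma A}h\|_{\cL^2}\le C\sigma^{-1/2}\|h\|_{\cL^2}$ of Proposition~\ref{prop.ElementSemigroup}(2) (the latter, by duality, lets the divergence-form part of $R_s$ be convolved against $e^{(t-s)A}$ with a gain $(t-s)^{-1/2}$). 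After the time convolution the spatial errors and the defect $W_t-\Ub_t$ all contribute decaying terms; optimizing the mesoscale against $\sqrt t$ as in Proposition~\ref{prop.TwoScaleElliptic} then produces the exponent $\beta$, which is polynomial in $\alpha$.

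\emph{Step 3: the main obstacle.} Two points make Step~2 incomplete. First, the parabolic term inserted naively into Duhamel gives only $\int_{\tau_0}^t 3^n s^{-3/2}\,\d s\lesssim 3^n$, which does not decay; it has to be treated either by an integration by parts in time, $\int e^{(t-s)A}\partial_s Z_s\,\d s=Z_t-e^{(t-\tau_0)A}Z_{\tau_0}+\int Ae^{(t-s)A}Z_s\,\d s$ with $Z_s:=W_s-\Ub_s$ (using the $\a$-harmonicity of the correctors on their cubes to control $AZ_s$), or by letting the mesoscale grow with time, $3^{n(s)}\sim s^{\theta}$ with $\theta<\tfrac12$, which in turn requires keeping track of the jumps of $s\mapsto n(s)$. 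Second, and more seriously, the errors generated near $s=O(1)$ — where $P_sF$ is still microscopic and $\|(P_s-\Pb_s)F\|_{\cL^2}$ is of order $\|f\|_{L^2}$, not small — enter $E_{\tau_0}$ and are damped only by the trivial contraction $\|e^{\sigma A}\|\le1$, so the two-scale estimate by itself yields boundedness rather than decay. Turning it into genuine decay requires feeding in a quantitative relaxation estimate for the semigroup — a Nash‑type / $\cL^1$–$\cL^2$ bound $\|e^{\sigma A}h\|_{\cL^2}\le C\sigma^{-\kappa}\|h\|_{\cL^1}$ on local centered $h$, so that $\|e^{(t-\tau_0)A}Z_{\tau_0}\|_{\cL^2}$ decays — applied recursively over dyadic time windows $[2^k,2^{k+1}]$; this is the link with the relaxation‑to‑equilibrium estimates of~\cite{gu2020decay, vardecay} mentioned in the introduction. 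Implementing this decay mechanism, together with the initial‑layer bookkeeping it entails, is the crux of the proof; the rest is a parabolic transcription of Proposition~\ref{prop.TwoScaleElliptic}.
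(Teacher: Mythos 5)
Your Step~3 identifies a genuine gap: the two-scale expansion applied directly to $\|(P_t-\Pb_t)F\|_{\cL^2}$ with $f\in L^2$ only yields boundedness, because the initial layer $E_{\tau_0}$ is $O(\|f\|_{L^2})$ and the contraction $\|e^{(t-\tau_0)A}\|_{\cL^2\to\cL^2}\le1$ cannot shrink it. However, the Nash-type $\cL^1\to\cL^2$ estimate you propose as a fix is far harder than anything the paper uses and is in fact unnecessary. The paper reverses your logical order: it proves Theorem~\ref{t.two.point} first, for the inner product $\bracket{G,(P_t-\Pb_t)F}_{\cL^2}$, and then obtains Proposition~\ref{prop.HomoSemigroup} as a two-line corollary by expanding the square
\[
\norm{(P_t-\Pb_t)\hat F}^2_{\cL^2}=\bracket{\hat F,P_{2t}\hat F}_{\cL^2}+\bracket{\hat F,\Pb_{2t}\hat F}_{\cL^2}-2\bracket{\Pb_t\hat F,P_t\hat F}_{\cL^2},
\]
with $\hat F:=F-\rho\int_\Rd f\,\d m$: Theorem~\ref{t.two.point} applied to the first and third terms (the third with $\Psi_t\star f$ in place of $g$) gives the leading term $\rho\iint f(x)\,\Psi_{2t}(x-y)\,f(y)\,\d x\,\d y$ for each, up to errors of order $t^{-\beta}\|f\|_{L^2}^2$; the second term equals this leading term exactly, so the three cancel.

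What makes the inner-product version tractable without a Nash estimate — and this is the ingredient your draft lacks — is Lemma~\ref{lem.regularization}, $\|\Pb_\tau P_t-P_t\|_{\cL^2\to\cL^2}\le\sqrt{\Lambda\tau/t}$, proved solely from the monotone decay of Dirichlet energy along the semigroup. In the proof of Theorem~\ref{t.two.point} this is combined with self-adjointness to write $\bracket{G,(P_t-\Pb_t)F}_{\cL^2}\approx\bracket{(P_t-\Pb_t)\Pb_\tau G,F}_{\cL^2}$, so that the mollified observable $\Pb_\tau G$, whose kernel $g_\tau=\Psi_\tau\star g\in H^2$, can be fed into Proposition~\ref{prop.TwoScalePara}. (Incidentally, the dual form $\|P_t\Pb_\tau-P_t\|=\|(\Pb_\tau P_t-P_t)^*\|\le\sqrt{\Lambda\tau/t}$, together with $\Pb_t\Pb_\tau=\Pb_{t+\tau}$, would let you mollify $F$ inside the norm $\|(P_t-\Pb_t)F\|_{\cL^2}$ and salvage a direct proof, if you prefer not to route through Theorem~\ref{t.two.point}.) Your Steps~1--2 are correct and essentially reconstruct Proposition~\ref{prop.TwoScalePara}, except that the paper uses an energy estimate with a Young weight $t^{5/4}$ to absorb the parabolic $\partial_t$-of-corrector term rather than Duhamel; it is this regularization-plus-self-adjointness device, not a quantitative relaxation estimate, that closes the gap you flagged as the crux.
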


		Our proof essentially follows the outline of \cite{zhikov2006estimates}. 
		In the proof, there are two main ingredients: the two-scale expansion and the regularization by the parabolic semigroup. 
		Since $(\Ee^\ab, \D(\Ee^\ab))$ defines the diffusion of independent Brownian motions, a very important observation is the explicit formula of $\Pb_t F(\mu)$ for the linear statistic $F(\mu) = \int_{\Rd} f \, \d \mu$, which reads as
		\begin{align}\label{eq.PbHomo}
			\Pb_t F (\mu) = \int_{\Rd}  \Psi_t \star f \, \d \mu,
		\end{align} 
		where we recall that $(\Psi_t(x))_{t \ge 0, x \in \Rd}$ is the heat kernel defined in \eqref{e.def.heat.kernel}, and $\star$ denotes the spatial convolution in $\Rd$. 
		We use the following classical heat kernel estimates on $\Rd$ (see e.g.\ \cite[(1.17)-(1.18)]{zhikov2006estimates}): for every $k \in \N$, there exists a constant $C_k(d, \Lambda, \rho) < +\infty$ such that for every $g \in L^2$ and with $g_t := \Psi_t \star g$,
		\begin{align}\label{eq.heatkernelClassic}
			\forall 0 < \tau < t < +\infty,  \qquad  \int_\tau^t \norm{\nabla^k g_s}^2_{L^2} \, \d s &\leq C_k \norm{\nabla^{k-1} g_\tau}^2_{L^2},\\
			\nonumber \norm{\nabla^k g_t}_{L^2} &\leq \frac{ C_k}{(t-\tau)^{k/2}}\norm{g_\tau}_{L^2}.
		\end{align}

		\subsection{Two-scale expansion of parabolic equation}
		We first give a proof of homogenization in the special case of a rather smooth initial condition, using a two-scale expansion. For any fixed $g \in L^1(\Rd) \cap L^2(\Rd)$, we denote its linear statistic by $G := \int_\Rd g \, \d \mu - \rho \int_{\Rd} g \, \d m$, and $G_t := P_t G$, $\bar{G}_t := \Pb_t G$. We propose its associated two-scale expansion
		\begin{align}\label{eq.TwoScalePara}
			\tilde G_t := \bar{G}_t + \sum_{i = 1}^d \sum_{z \in \Z_n} (\partial_i g_t)_{z+\cu_n} \phi_{e_i, z+\cu_n},
		\end{align} 
		where $\Z_n$ is defined in~\eqref{e.Z.def} and $g_t := \Psi_t \star g$. Our first estimate also assumes that $g$ belongs to $H^2(\Rd)$. 
		\begin{proposition}\label{prop.TwoScalePara}
			Let $\alpha(d, \Lambda, \rho) > 0$ be as given by Lemma~\ref{lem.JBound}. There exists a constant $C(d, \Lambda, \rho) < +\infty$ such that for every $g \in H^2(\Rd) \cap L^1(\Rd)$, $n \in \N_+$, and $t >0$, its linear statistic and associated two-scale expansion defined above satisfy 
			\begin{multline}\label{eq.TwoScaleParaBound}
				\norm{\tilde G_t - \bar G_t}_{\cL^2} + \norm{\tilde G_t - G_t}_{\cL^2} + \Ll(\int_0^t \Er\Ll[\int_{\Rd} \vert \nabla (\tilde G_s - G_s)\vert^2 \, \d \mu \Rr] \, \d s\Rr)^{\frac{1}{2}}\\
				\leq C \Ll(\Ll(3^{-\alpha n} + t^{-\frac{1}{8}}\Rr)\norm{g}_{L^2} + 3^n \norm{\nabla g}_{L^2} + 3^n t^{\frac{5}{8}}\norm{\nabla^2 g}_{L^2}\Rr).
			\end{multline}
		\end{proposition}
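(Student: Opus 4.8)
The plan is to mimic the elliptic two-scale argument of Proposition~\ref{prop.TwoScaleElliptic}, but now in the parabolic setting, testing the equation for $\tilde G_s - G_s$ against a time-dependent test function and exploiting the energy identities of Proposition~\ref{prop.ElementSemigroup}. First I would record that $\tilde G_t \in \cH^1(\Rd)$ and compute $\partial_t \tilde G_t$: since $\bar G_t = \Pb_t G$ solves the homogenized parabolic problem and the correctors $\phi_{e_i,z+\cu_n}$ are time-independent, we get $\partial_t \tilde G_t = \partial_t \bar G_t + \sum_{i,z} (\partial_i \partial_t g_t)_{z+\cu_n}\phi_{e_i,z+\cu_n}$, and using $\partial_t g_t = \tfrac12 \nabla\cdot\ab\nabla g_t$ this can be rewritten with spatial second derivatives of $g_t$. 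The key point is that for a linear statistic $\bar G_t$ has the explicit form \eqref{eq.PbHomo}, so all the averaged slopes $(\partial_i g_t)_{z+\cu_n}$ are deterministic constants on each cube, exactly as in the elliptic case; the only genuinely new feature is the time derivative, which produces an extra source term proportional to $\|\nabla^2 g_t\|_{L^2}$ (hence the factor $3^n t^{5/8}\|\nabla^2 g\|_{L^2}$ after integrating the heat-kernel bounds \eqref{eq.heatkernelClassic}).

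Second, I would set up the weak formulation of the error equation: for every $V \in L^2([0,t],\cH^1_0)$ (or $\cH^1(\Rd)$ with appropriate decay) one has
\begin{equation*}
\Er[(\tilde G_t - G_t)V] + \int_0^t \Ee^\a(\tilde G_s - G_s, V)\,\d s = \int_0^t \big( \Er[\mathcal R_s V] + \text{flux terms}\big)\,\d s,
\end{equation*}
where the right-hand side is decomposed exactly as $\mathbf{I}+\mathbf{II.1}+\mathbf{II.2}$ in Proposition~\ref{prop.TwoScaleElliptic}: $\mathbf{I}$ is the slope-correction error controlled via Poincar\'e's inequality by $3^n\|\nabla^2 g_s\|_{L^2}$; $\mathbf{II.1}$ is the corrector-vs-dual-corrector difference, controlled by $3^{-\alpha n}$ using Lemma~\ref{lem.JBound}(2); $\mathbf{II.2}$ is the flux replacement, controlled by $3^{-\alpha n}$ using \eqref{eq:QuantConvergence}; and $\mathcal R_s$ collects the $\partial_t$-generated term bounded by $3^n\|\nabla^2 g_s\|_{L^2}$ and a boundary/truncation term. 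Testing with $V = \tilde G_s - G_s$ and using the energy identity \eqref{eq.L2Energy}-\eqref{eq.semigroupA} gives
\begin{equation*}
\tfrac12\|\tilde G_t - G_t\|_{\cL^2}^2 + \int_0^t \Ee^\a(\tilde G_s - G_s,\tilde G_s - G_s)\,\d s \le \int_0^t \|\text{errors}\|_{\cH^{-1}}\,\|\nabla(\tilde G_s - G_s)\|_{L^2(\d\mu)}\,\d s,
\end{equation*}
and after Young's inequality (absorbing half the energy into the left-hand side) we obtain the $\cH^1$ part of \eqref{eq.TwoScaleParaBound} plus $\|\tilde G_t - G_t\|_{\cL^2}$ in terms of time-integrals of $\|\nabla g_s\|_{L^2}$, $\|\nabla^2 g_s\|_{L^2}$, and $3^{-\alpha n}\|\nabla g_s\|_{L^2}$. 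Finally, the $\cL^2$ bound $\|\tilde G_t - \bar G_t\|_{\cL^2}\le C 3^n \|\bar G_t\|_{\acH^1}$ is the direct computation from Proposition~\ref{prop.TwoScaleElliptic} Step~3 using independence of correctors (Lemma~\ref{lem.JBound}(1)) and Poincar\'e, and $\|\tilde G_t - G_t\|_{\cL^2}\le \|\tilde G_t - \bar G_t\|_{\cL^2} + \|\text{(from energy estimate)}\|$. The heat-kernel estimates \eqref{eq.heatkernelClassic} then convert every $\|\nabla^k g_s\|_{L^2}$ appearing under a $\d s$-integral into the stated powers of $t$ times $\|g\|_{L^2}$ or $\|\nabla g\|_{L^2}$ or $\|\nabla^2 g\|_{L^2}$; the somewhat delicate bookkeeping of exponents (splitting $\int_0^t = \int_0^{t/2} + \int_{t/2}^t$ and tracking where to pay $\|g\|_{L^2}$ versus $\|\nabla g\|_{L^2}$) is what produces the precise powers $t^{-1/8}$ and $t^{5/8}$.

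The main obstacle I anticipate is twofold. First, unlike the elliptic problem on the bounded cube $\cu_m$, here we work on all of $\Rd$, so one must be careful that the sums $\sum_{z\in\Z_n}$ over infinitely many translated correctors converge and that $\tilde G_t$ genuinely lies in $\cH^1(\Rd)$ with the right integrability — this uses $g\in L^1\cap L^2$ to control the tails of $g_t$ and its derivatives. Second, and more seriously, the time-derivative source term $\mathcal R_s$ requires regularity of $g_t$ beyond what the semigroup immediately gives near $s=0$: the term $3^n t^{5/8}\|\nabla^2 g\|_{L^2}$ signals that the naive heat-kernel smoothing estimate on $\|\nabla^2 g_s\|_{L^2}$ integrated against $\d s$ would diverge, so one instead keeps $\|\nabla^2 g\|_{L^2}$ as a hypothesis (this is exactly why the proposition assumes $g\in H^2$) and only the borderline exponent $5/8$ survives after balancing. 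Getting these exponents to match simultaneously across the three error contributions, while keeping the $\cH^1$-energy estimate closed, is the technical heart of the argument; everything else is a parabolic rerun of Proposition~\ref{prop.TwoScaleElliptic}.
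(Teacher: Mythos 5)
Your proposal is correct and follows essentially the same path as the paper's proof. In particular, you correctly identify (i) the direct $\cL^2$ bound on $\tilde G_t - \bar G_t$ using the independence of correctors (Lemma~\ref{lem.JBound}(1)) and Poincar\'e; (ii) testing the error equation for $\tilde G_s - G_s$ with itself and using the energy identity of Proposition~\ref{prop.ElementSemigroup}; (iii) the decomposition of the spatial error term into the slope correction and the flux replacement, exactly paralleling $\mathbf{I}$, $\mathbf{II.1}$, $\mathbf{II.2}$ of the elliptic proof; and (iv) the additional time-derivative source term $\sum_{i,z}(\partial_i \partial_s g_s)_{z+\cu_n}\phi_{e_i,z+\cu_n}$, which is the genuinely new feature of the parabolic case. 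The only implementation detail where the paper differs slightly is in how the time-derivative source is balanced: rather than splitting the time integral at $t/2$, the paper applies Young's inequality with a fixed weight $t^{5/4}$ in $s$ (with $t$ the fixed terminal time), which directly produces the exponents $t^{-1/4}$ and $t^{5/4}$ after integrating the heat-kernel bounds \eqref{eq.heatkernelClassic}; this is a cosmetic difference and either route yields comparable exponents.

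Two small caveats on precision in your write-up that the paper handles implicitly. When you write the energy estimate with an $\cH^{-1}$--$\cH^1$ Cauchy--Schwarz step, the dual pairing should be against $\|\tilde G_s - G_s\|_{\cH^1}$ rather than only $\|\nabla(\tilde G_s - G_s)\|_{L^2(\d\mu)}$; the paper avoids this by applying Cauchy--Schwarz directly on the $\Er\int \ldots\d\mu$ integrals, followed by Young to absorb half of the energy term into the left-hand side. Also note that since the correctors $\phi_{e_i,z+\cu_n}$ live in $\cH^1_0(z+\cu_n)$ and are thus local and independent across non-adjacent cubes, the infinite sums over $z\in\Z_n$ converge in $\cL^2$ whenever $g_t, \nabla g_t \in L^2(\Rd)$, which is exactly what $g\in H^2\cap L^1$ guarantees through the heat-kernel smoothing; your worry about this is legitimate but is resolved by precisely the estimates you cite.
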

		\begin{proof}		
			The part $\norm{\tilde G_t - \bar G_t}_{\cL^2}$ can be bounded directly
			\begin{equation}\label{eq.TwoScaleParaBound1}
				\begin{split}
					\norm{\tilde G_t - \bar G_t}^2_{\cL^2} &=  \Er\Ll[\Ll(\sum_{i=1}^d \sum_{z \in \Z_n} (\partial_i g_t)_{z+\cu_n} \phi_{e_i, z+\cu_n}\Rr)^2\Rr]\\
					&\leq  C \sum_{i=1}^d \sum_{z \in \Z_n} (\partial_i g_t)^2_{z+\cu_n} \Er\Ll[ \phi^2_{e_i, z+\cu_n}\Rr] \\
					&\leq  C 3^{2n} \sum_{i=1}^d \sum_{z \in \Z_n} (\partial_i g_t)^2_{z+\cu_n}  \Er\Ll[ \int_{z+\cu_n} \vert \nabla \phi_{e_i, z+\cu_n}\vert^2 \, \d \mu\Rr]\\
					&\leq C 3^{2n} \norm{\nabla g_t}^2_{L^2}\\
					&\leq C 3^{2n} \norm{\nabla g}^2_{L^2}.
				\end{split}
			\end{equation}		
			In the first three lines, we use the independence of the approximate correctors in (1) of Lemma~\ref{lem.JBound}. In the last line, we use the contraction from the convolution $\Psi_t$.  
			
			The main part of the proof is the estimation of $(\tilde G_t - G_t)$. Using the parabolic equations associated to $G_s$ and $\bar G_s$, we have
			\begin{align*}
				\Ll(\partial_s - \frac{1}{2} \nabla \cdot \a \nabla\Rr) G_s = \Ll(\partial_s - \frac{1}{2} \nabla \cdot \ab \nabla\Rr) \bar G_s = 0,
			\end{align*}
			and thus
			\begin{align*}
				\Ll(\partial_s - \frac{1}{2} \nabla \cdot \a \nabla\Rr)(\tilde G_s - G_s) = \Ll(\partial_s - \frac{1}{2} \nabla \cdot \a \nabla\Rr)\tilde G_s - \Ll(\partial_s - \frac{1}{2} \nabla \cdot \ab \nabla\Rr) \bar G_s .
			\end{align*}
			We test this with $(\tilde G_s - G_s)$ and integrate over $[0,t]$ 
			\begin{equation}\label{eq.paraDecom}
				\begin{split}
					&\norm{\tilde G_t - G_t}^2_{\cL^2} + \int_0^t \Er\Ll[\int_{\Rd} \vert \nabla(\tilde G_s - G_s) \vert^2 \, \d \mu \Rr] \, \d s \\
					&\leq \norm{\tilde G_0 - G_0}^2_{\cL^2} \\
					& \qquad + \int_{0}^t \Er\Ll[\int_{\cu_m} \nabla (\tilde G_s - G_s) \cdot (\a \nabla \tilde G_s - \ab \nabla \bar G_s) \, \d \mu \Rr] \, \d s \\
					& \qquad + \int_{0}^t   \Er\Ll[ (\tilde G_s - G_s) \sum_{i=1}^d \sum_{z \in \Z_n} (\partial_i \partial_s g_s)_{z+\cu_n}\phi_{e_i, z+\cu_n}\Rr]\, \d s. 
				\end{split}
			\end{equation}
			It suffices to estimate the three terms on the {\rhs} to obtain \eqref{eq.TwoScaleParaBound}. The first term can be estimated similarly to $\norm{\tilde G_t - \bar G_t}^2_{\cL^2}$ in \eqref{eq.TwoScaleParaBound1} as the evolution has not started and $G_0  = \bar G_0$, so
			\begin{align}\label{eq.paraDecom1}
				\norm{\tilde G_0 - G_0}^2_{\cL^2} \leq C 3^{2n} \norm{\nabla g}^2_{L^2}.
			\end{align}
			The third term in \eqref{eq.paraDecom} only depends on the elementary $\cL^2$ bound, as $\partial_i \partial_s g_s$ gains enough decay from \eqref{eq.heatkernelClassic}
			\begin{equation}\label{eq.paraDecom3}
				\begin{split}
					&\Ll\vert \int_{0}^t   \Er\Ll[ (\tilde G_s - G_s) \sum_{i=1}^d \sum_{z \in \Z_n} (\partial_i \partial_s g_s)_{z+\cu_n}\phi_{e_i, z+\cu_n}\Rr]\, \d s \Rr\vert \\
					&\leq  \int_{0}^t \frac{t^{-\frac{5}{4}}}{2} \norm{\tilde G_s - G_s}^2_{\cL^2} +  \frac{t^{\frac{5}{4}}}{2} \norm{\sum_{i=1}^d \sum_{z \in \Z_n}(\partial_i \partial_s g_s)_{z+\cu_n}\phi_{e_i, z+\cu_n}}^2_{\cL^2} \, \d s \\
					&\le C  \int_{0}^t t^{-\frac{5}{4}} \Ll(\norm{\bar G_s}^2_{\cL^2} + \norm{G_s}^2_{\cL^2} + \sum_{z \in \Z_n} \norm{\sum_{i = 1}^d (\partial_i g_t)_{z+\cu_n} \phi_{e_i, z+\cu_n}}^2_{\cL^2}\Rr)  \\
					&  \qquad \qquad +  t^{\frac{5}{4}} \sum_{z \in \Z_n}\norm{\sum_{i=1}^d (\partial_i \partial_s g_s)_{z+\cu_n}\phi_{e_i, z+\cu_n}}^2_{\cL^2} \, \d s \\
					&\leq C\int_{0}^t t^{-\frac{5}{4}}  (\norm{g}^2_{L^2} + 3^{2n}\|\nabla g\|_{L^2}^2)  + 3^{2n} t^{\frac{5}{4}}  \norm{\nabla \partial_s g_s}^2_{L^2}\, \d s\\
					&\leq C \Ll(t^{-\frac{1}{4}} \norm{g}^2_{L^2} + t^{-\frac{1}{4}} 3^{2n}\|\nabla g\|_{L^2}^2 + 3^{2n} t^{\frac{5}{4}}  \norm{\nabla^2 g}^2_{L^2}\Rr).
				\end{split}
			\end{equation}
			Here in the second line, we apply Young's inequality  and choose the weight $t^{\frac{5}{4}}$ to better balance each term (it suffices to take $t^{\theta}$ with an exponent $\theta \in (1,2)$). In the third line, we apply the triangle inequality to $\norm{\tilde G_s - G_s}_{\cL^2}$ with the expression \eqref{eq.TwoScalePara} at first, then we apply the independence of the approximate correctors in Lemma~\ref{lem.JBound} to the term of type $\norm{\sum_{i=1}^d \sum_{z \in \Z_n}(\partial_i \partial_s g_s)_{z+\cu_n}\phi_{e_i, z+\cu_n}}^2_{\cL^2}$. From the fourth line to the fifth line, we use \eqref{eq.heatkernelClassic} and the identity $\partial_s g_s = \frac{1}{2} \nabla \cdot (\ab \nabla g_s)$.  
			
			Using the explicit expression of $\bar G_s$ in \eqref{eq.PbHomo}, the second term in \eqref{eq.paraDecom} can be treated like the elliptic case \eqref{eq.ellipticDecom} by the following decomposition
			\begin{align}\label{eq.paraDecom2Decom}
				\int_{0}^t \Er\Ll[\int_{\Rd} \nabla (\tilde G_s - G_s) \cdot (\a \nabla \tilde G_s - \ab \nabla \bar G_s) \, \d \mu \Rr] \, \d s = \mathbf{I'} + \mathbf{II'},
			\end{align}
			where the two terms are respectively
			\begin{align*}
				\mathbf{I'} & := \int_{0}^t \sum_{z \in \Z_{n}}  \Er\Ll[\int_{z+\cu_n} \nabla (\tilde G_s - G_s) \cdot (\a-\ab)(\nabla g_s - (\nabla g_s)_{z+\cu_n}) \, \d \mu\Rr] \, \d s, \\
				\mathbf{II'} & := \int_{0}^t  \sum_{i = 1}^d \sum_{z \in \Z_{n}}  (\partial_i g_s)_{z+\cu_n} \Er\Ll[\int_{z+\cu_n} \nabla (\tilde G_s - G_s) \cdot \Ll(\a(e_i + \nabla \phi_{e_i, z+\cu_n}) - \ab e_i\Rr) \, \d \mu\Rr] \, \d s.
			\end{align*} 
			For the term $\mathbf{I'}$, the main issue is to use the Poincar\'e inequality to fix the slope, and the proof is similar to \eqref{eq.TermI}
			\begin{equation}\label{eq.paraDecom2Term1}
				\begin{split}
					\vert \mathbf{I'} \vert &\leq C  3^n\Ll(\int_0^t \Er\Ll[\int_{\Rd} \vert \nabla (\tilde G_s - G_s)\vert^2 \, \d \mu \Rr] \, \d s\Rr)^{\frac{1}{2}} \Ll(\int_{0}^t \sum_{z \in \Z_{n}} \int_{z+\cu_n} \vert\nabla^2 g_s\vert^2 \, \d m  \, \d s\Rr)^{\frac{1}{2}}\\
					&=C 3^n\Ll(\int_0^t \Er\Ll[\int_{\Rd} \vert \nabla (\tilde G_s - G_s)\vert^2 \, \d \mu \Rr] \, \d s\Rr)^{\frac{1}{2}} \Ll(\int_{0}^t \norm{\nabla^2 g_s}^2_{L^2}  \, \d s\Rr)^{\frac{1}{2}}\\
					&\leq C 3^n\Ll(\int_0^t \Er\Ll[\int_{\Rd} \vert \nabla (\tilde G_s - G_s)\vert^2 \, \d \mu \Rr] \, \d s\Rr)^{\frac{1}{2}}  \norm{\nabla g}_{L^2}  \\
					&\leq \frac{1}{4}\int_0^t \Er\Ll[\int_{\Rd} \vert \nabla (\tilde G_s - G_s)\vert^2 \, \d \mu \Rr] \, \d s + C 3^{2n}\norm{\nabla g}^2_{L^2}. 
				\end{split}
			\end{equation}
			In the third line, we apply the classical heat kernel decay \eqref{eq.heatkernelClassic}. In the fourth line, the Young's inequality is used, and the first term there can be compensated by part of the {\lhs} in \eqref{eq.paraDecom}. 
			
			For the term $\mathbf{II'}$, we apply the flux replacement argument like \eqref{eq.TermII1} and \eqref{eq.TermII2}
			\begin{equation}\label{eq.paraDecom2Term2}
				\begin{split}
					\vert \mathbf{II'}\vert &\leq C \int_0^t 3^{-\alpha n} \norm{\nabla g_s}_{L^2} \Er^{\frac{1}{2}}\Ll[\int_{\Rd} \vert \nabla (\tilde G_s - G_s)\vert^2 \, \d \mu \Rr]  \, \d s \\
					&\leq \int_0^t C 3^{- 2\alpha n} \norm{\nabla g_s}^2_{L^2}  \, \d s + \frac{1}{4}\int_0^t \Er\Ll[\int_{\Rd} \vert \nabla (\tilde G_s - G_s)\vert^2 \, \d \mu \Rr] \, \d s\\
					&\leq C 3^{-2\alpha n} \norm{g}^2_{L^2}  + \frac{1}{4}\int_0^t \Er\Ll[\int_{\Rd} \vert \nabla (\tilde G_s - G_s)\vert^2 \, \d \mu \Rr] \, \d s. 
				\end{split}
			\end{equation}
			We apply Young's inequality in the second line, and the classical heat kernel decay \eqref{eq.heatkernelClassic} in the third line. We put \eqref{eq.paraDecom2Term1}, \eqref{eq.paraDecom2Term2} back to \eqref{eq.paraDecom2Decom} and get 
			\begin{multline}\label{eq.paraDecom2}
				\Ll\vert \int_{0}^t \Er\Ll[\int_{\Rd} \nabla (\tilde G_s - G_s) \cdot (\a \nabla \tilde G_s - \ab \nabla \bar G_s) \, \d \mu \Rr]\Rr\vert \\
				\leq \frac{1}{2}\int_0^t \Er\Ll[\int_{\Rd} \vert \nabla (\tilde G_s - G_s)\vert^2 \, \d \mu \Rr] \, \d s + C 3^{-2\alpha n} \norm{g}^2_{L^2}  +  C 3^{2n}\norm{\nabla g}^2_{L^2}.
			\end{multline}
			Finally, we put \eqref{eq.paraDecom1}, \eqref{eq.paraDecom3}, \eqref{eq.paraDecom2} back to \eqref{eq.paraDecom} and obtain the desired result.
		\end{proof}

		\begin{remark}
			The result above is already enough for us to prove \eqref{e.two.pointScale} with the scaling and a good test function $g \in H^2 \cap L^1$. Let us define 
			\begin{align*}
				g^N(x) := N^{-\frac{d}{2}} g\Ll(\frac{x}{N}\Rr), \qquad G^N := \int_\Rd g \, \d \mu - \rho \int_{\Rd} g \d m.
			\end{align*}
			This scaling gives us 
			\begin{align*}
				\norm{G^N}^2_{\cL^2} = \rho \norm{g^N}^2_{L^2} = \rho \norm{g}^2_{L^2}.
			\end{align*}
			Then we calculate the density field correlation 
			\begin{align*}
				&\Ll\vert\Er \EEbmu\Ll[Y^N_t(g) Y^N_0(g)\Rr] - \rho \int_{\Rd \times \Rd} g(x)\Psi_{t}(x-y)g(y)\, \d x  \d y \Rr\vert\\
				&=  \Ll\vert \Er\Ll[G^N (P_{N^2t}-\Pb_{N^2t})G^N\Rr] \Rr\vert\\
				&\leq \norm{G^N}_{\cL^2} \norm{(P_{N^2t}-\Pb_{N^2t})G^N}_{\cL^2}\\
				&\leq C\Ll(3^{-\alpha n} + (N^2t)^{-\frac{1}{8}}\Rr)\norm{g}_{L^2} + 3^n N^{-1} \norm{\nabla g}_{L^2}+ 3^n (N^2t)^{\frac{5}{8}}N^{-2} \norm{\nabla^2 g}_{L^2}.
			\end{align*}
			With the scaling, we gain an extra factor $N^{-1}$ for the gradient. It suffices to choose $1 \ll 3^n \ll N^{\frac{3}{4}}$, for example, $3^n \simeq N^{\frac{1}{2}}$, then the error terms vanish when $N \to \infty$ with an explicit rate. 
		\end{remark}
		
		\subsection{Regularization effect}
		In this part, we will show that the diffusive regularization effect of the parabolic semigroup can help weaken the assumption $g \in H^2$ in Proposition~\ref{prop.TwoScalePara}.  
		
		\begin{lemma}\label{lem.regularization}
			For any $\tau, t > 0$, we have the following estimates
			\begin{align}\label{eq.regularization}
				\norm{\Pb_\tau P_t - P_t}_{\cL^2 \to \cL^2}  \leq \sqrt{\frac{\Lambda\tau}{t}} \quad \text{ and } \quad \norm{\Pb_\tau \Pb_t - \Pb_t}_{\cL^2 \to \cL^2} \leq \sqrt{\frac{\Lambda\tau}{t}} .
			\end{align}
		\end{lemma}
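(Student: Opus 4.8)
The plan is to deduce both estimates from the elementary energy-decay bound \eqref{eq.L2Decay} of Proposition~\ref{prop.ElementSemigroup}, combined with the ellipticity comparison \eqref{eq.Compare}, via the scalar inequality $(1-e^{-r})^2 \le r$, valid for all $r \ge 0$ (indeed $g(r) := r - (1-e^{-r})^2$ satisfies $g(0) = 0$ and $g'(r) = 1 - 2e^{-r} + 2e^{-2r} = 2(e^{-r} - \tfrac12)^2 + \tfrac12 > 0$).

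First I would establish the general one-step bound: for every $v \in \cH^1(\Rd) = \D(\Ee^{\ab})$ and every $\tau > 0$,
\[
\norm{\Pb_\tau v - v}_{\cL^2}^2 \le \tau\, \Ee^{\ab}(v,v).
\]
This follows from the spectral decomposition of the (negative) generator $\bar A$ of $(\Pb_t)_{t\ge0}$: writing $(E_\lambda)$ for its spectral resolution, one has $\norm{\Pb_\tau v - v}_{\cL^2}^2 = \int_0^\infty (1-e^{-\lambda \tau})^2 \, \d\bracket{E_\lambda v, v}_{\cL^2} \le \tau \int_0^\infty \lambda \, \d\bracket{E_\lambda v, v}_{\cL^2} = \tau\, \Ee^{\ab}(v,v)$, exactly as in \eqref{eq.Spectral}. (Alternatively, avoiding spectral calculus: with $\psi_s := \Pb_s v - v$ one computes $\tfrac{\d}{\d s}\norm{\psi_s}_{\cL^2}^2 = -2\Ee^{\ab}(\Pb_s v, \Pb_s v) + 2\Ee^{\ab}(\Pb_s v, v) \le \Ee^{\ab}(v,v)$ by Cauchy--Schwarz for the bilinear form $\Ee^{\ab}$, and integrates over $[0,\tau]$.) I would then apply this with $v = P_t u$, which belongs to $\cH^1(\Rd)$ for $t > 0$ by Proposition~\ref{prop.ElementSemigroup}; using $\ab \le \Lambda\, \a$ (a consequence of $\ab \le \Lambda\,\id$ and $\a \ge \id$, cf.\ \eqref{eq.Compare}) and then the decay estimate $\Ee^{\a}(P_t u, P_t u) \le \tfrac{1}{2t}\norm{u}_{\cL^2}^2$ from \eqref{eq.L2Decay}, I obtain
\[
\norm{\Pb_\tau P_t u - P_t u}_{\cL^2}^2 \le \tau\, \Ee^{\ab}(P_t u, P_t u) \le \Lambda\tau\, \Ee^{\a}(P_t u, P_t u) \le \frac{\Lambda\tau}{2t}\norm{u}_{\cL^2}^2,
\]
and taking the supremum over $\norm{u}_{\cL^2} \le 1$ gives the first bound of \eqref{eq.regularization}, in fact with an extra factor $\tfrac{1}{\sqrt 2}$ to spare. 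For the second bound the argument is identical, except that one applies the one-step bound and the decay estimate \eqref{eq.L2Decay} directly to the semigroup $(\Pb_t)_{t\ge0}$ — which is generated by the uniformly elliptic Dirichlet form $\Ee^{\ab}$, so Proposition~\ref{prop.ElementSemigroup} applies to it verbatim — giving $\norm{\Pb_\tau \Pb_t u - \Pb_t u}_{\cL^2}^2 \le \tau\, \Ee^{\ab}(\Pb_t u, \Pb_t u) \le \tfrac{\tau}{2t}\norm{u}_{\cL^2}^2 \le \tfrac{\Lambda\tau}{t}\norm{u}_{\cL^2}^2$ since $\Lambda \ge 1$.

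I do not expect a genuine obstacle here: the lemma is essentially a packaging of the spectral one-step inequality with the energy decay from Proposition~\ref{prop.ElementSemigroup}. The only points requiring a little care are checking that $P_t u$ genuinely lies in the form domain $\D(\Ee^{\ab}) = \cH^1(\Rd)$, so that the one-step bound applies, and correctly invoking the two-sided ellipticity comparison to pass from $\Ee^{\ab}$ to $\Ee^{\a}$ before using the energy decay.
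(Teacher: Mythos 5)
Your argument is correct. You prove the one-step bound $\norm{\Pb_\tau v - v}_{\cL^2}^2 \le \tau\,\Ee^{\ab}(v,v)$ directly from the spectral inequality $(1-e^{-r})^2 \le r$ (or, equivalently, from a Gronwall-type differential estimate), then compose with the energy decay of \eqref{eq.L2Decay} and the ellipticity comparison $\ab \le \Lambda\id \le \Lambda\a$. The paper arrives at the same estimate via a slightly different path: it first proves the one-sided inequality $\norm{\Pb_\tau\tilde F}^2_{\cL^2} \ge \norm{\tilde F}^2_{\cL^2} - \Lambda\tau\,\Er[\int|\nabla\tilde F|^2\,\d\mu]$ (again from the energy identity and the monotonicity of the Dirichlet energy), and then converts this into a bound on the distance $\norm{\Pb_\tau P_t F - P_t F}_{\cL^2}$ by means of a polarization identity that exploits self-adjointness, namely $\norm{(\Pb_\tau P_t - P_t)F}^2_{\cL^2} = \norm{\Pb_\tau P_t F}^2_{\cL^2} + \norm{P_t F}^2_{\cL^2} - 2\norm{\Pb_{\tau/2}P_t F}^2_{\cL^2}$, with the bound on $\norm{\Pb_{\tau/2}P_t F}^2_{\cL^2}$ coming from the same one-sided inequality. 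Both proofs rest on exactly the same ingredients (Proposition~\ref{prop.ElementSemigroup} and the ellipticity comparison), and the difference is simply how the distance $\norm{\Pb_\tau v - v}$ is extracted: you bound it in one shot via $(1-e^{-r})^2\le r$, while the paper bounds the norm deficit and recovers the distance through the $\Pb_{\tau/2}$ trick. Your route is marginally shorter and, as you note, yields the cleaner constant $\sqrt{\Lambda\tau/(2t)}$; the paper's route has the mild advantage of never explicitly invoking the spectral theorem within this lemma (though it is used elsewhere in Proposition~\ref{prop.ElementSemigroup}, so nothing is really saved).
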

		\begin{proof}
			Using the parabolic equation \eqref{eq.semigroupA} associated to $\Pb_t$, for any $\tilde F \in \cL^2$
			\begin{align*}
				\norm{\Pb_\tau \tilde F}^2_{\cL^2} - \norm{\tilde F}^2_{\cL^2} + \int_0^\tau \Er\Ll[\int_{\Rd}  \nabla \Pb_s \tilde F \cdot \ab \nabla \Pb_s \tilde F \, \d \mu \Rr] \, \d s = 0,
			\end{align*}
			which implies that 
			\begin{align*}
				\norm{\Pb_\tau \tilde F}^2_{\cL^2} &\geq \norm{\tilde F}^2_{\cL^2} - \tau \Er\Ll[\int_{\Rd}   \nabla  \tilde F \cdot \ab  \nabla  \tilde F \, \d \mu \Rr] \\
				&\geq  \norm{\tilde F}^2_{\cL^2} - \Lambda \tau \Er\Ll[\int_{\Rd}  \vert \nabla  \tilde F \vert^2 \, \d \mu \Rr].
			\end{align*}
			Here we use the decay of the Dirichlet energy of the parabolic semigroup in Proposition~\ref{prop.ElementSemigroup}. Then we insert $\tilde{F} = P_t F$, and obtain 
			\begin{align}\label{eq.regularization1}
				\norm{\Pb_\tau P_t F}^2_{\cL^2} &\geq \norm{P_t F}^2_{\cL^2} - \Lambda \tau \Er\Ll[\int_{\Rd}  \vert \nabla  P_t F  \vert^2 \, \d \mu \Rr]\\
				\nonumber &\geq \norm{P_t F}^2_{\cL^2} - \frac{\Lambda \tau}{t} \norm{F}^2_{\cL^2}.
			\end{align}
			In the last line, we also use the semigroup property associated to $P_t$ in \eqref{eq.L2Decay}. This estimate results in that of $\norm{\Pb_\tau P_t - P_t}_{\cL^2 \to \cL^2}$
			\begin{align*}
				\norm{(\Pb_\tau P_t - P_t) F}^2_{\cL^2} &= \norm{\Pb_\tau P_t F}^2_{\cL^2} + \norm{P_t F}^2_{\cL^2} - 2 \norm{\Pb_{\frac{\tau}{2}} P_t F}^2_{\cL^2}\\
				&\leq 2 \norm{P_t F}^2_{\cL^2} - 2\Ll(\norm{P_t F}^2_{\cL^2} - \frac{\Lambda \tau}{2t}\norm{F}^2_{\cL^2} \Rr)\\
				&\leq \frac{\Lambda \tau}{t}\norm{F}^2_{\cL^2}.
			\end{align*}
			Here in the second line, we apply $\norm{\Pb_\tau P_t F}^2_{\cL^2} \leq \norm{P_t F}^2_{\cL^2}$ and the estimate \eqref{eq.regularization1} to $\norm{\Pb_{\frac{\tau}{2}} P_t F}^2_{\cL^2}$. A similar argument also works for $\norm{\Pb_\tau \Pb_t - \Pb_t}_{\cL^2 \to \cL^2}$.
		\end{proof}
		
		\begin{proof}[Proof of Theorem~\ref{t.two.point}]
			For two functions $f, g$ as in the statement, we define 
			\begin{align*}
				F(\mu) := Y_0(f) = \int_{\Rd} f \, \d \mu - \rho \int_{\Rd} f \, \d m,\\
				G(\mu) := Y_0(g) = \int_{\Rd} g \, \d \mu - \rho \int_{\Rd} g \, \d m.
			\end{align*}
			As $\Pr$ is a stationary measure for the process $(\bmu_t)_{t \geq 0}$, it suffices to treat the case $s = 0$, and \eqref{e.two.point} is equivalent to estimating $\Er[G (P_t -\Pb_t) F]$. We apply the regularization at first. Recall that we denote by $\bracket{\cdot, \cdot}_{\cL^2}$ the inner product of $\cL^2$ space
			\begin{align*}
				&\Ll\vert \bracket{G, (P_t -\Pb_t) F}_{\cL^2} \Rr\vert \\
				&\leq \Ll\vert \bracket{G, \Pb_\tau (P_t -\Pb_t) F}_{\cL^2} \Rr\vert + \norm{G}_{\cL^2} \norm{\Pb_\tau (P_t -\Pb_t) F - (P_t -\Pb_t) F}_{\cL^2}\\
				&\leq \Ll\vert \bracket{G, \Pb_\tau (P_t -\Pb_t) F}_{\cL^2} \Rr\vert + 2\sqrt{\frac{\Lambda\tau}{t}} \norm{G}_{\cL^2}  \norm{F}_{\cL^2}.
			\end{align*}
			In the second line, we apply Lemma~\ref{lem.regularization} to the second term. When $\tau \ll t$, the error paid is very small and it suffices to consider the regularized version. Then we use the self-adjoint property of the operator 
			\begin{align*}
				\bracket{G, \Pb_\tau (P_t -\Pb_t) F}_{\cL^2} = \bracket{ (P_t -\Pb_t) (\Pb_\tau G),   F}_{\cL^2}.
			\end{align*}
			The term $\Pb_\tau G$ gains some more regularity, and it still has an explicit expression 
			\begin{align*}
				\Pb_\tau G (\mu) = \int_{\Rd} g_\tau \, \d \mu - \rho \int_{\Rd} g_\tau \, \d m,
			\end{align*}
			with $g_\tau = \Psi_\tau \star g \in H^2(\Rd) \cap L^1(\Rd)$. Thus, we apply Proposition~\ref{prop.TwoScalePara} to it
			\begin{align*}
				\Ll\vert \bracket{G, \Pb_\tau (P_t -\Pb_t) F}_{\cL^2} \Rr\vert &\leq \norm{ (P_t -\Pb_t) (\Pb_\tau G)}_{\cL^2} \norm{F}_{\cL^2} \\
				&\leq C\Ll(\Ll(3^{-\alpha n} + t^{-\frac{1}{8}}\Rr)\norm{g_\tau}_{L^2} + 3^n \norm{\nabla g_\tau}_{L^2} + 3^n t^{\frac{5}{8}}\norm{\nabla^2 g_\tau}_{L^2}\Rr) \norm{f}_{L^2}\\
				&\leq C\Ll(3^{-\alpha n} + t^{-\frac{1}{8}} +  3^n \tau^{-\frac{1}{2}} + 3^n t^{\frac{5}{8}} \tau^{-1}\Rr)\norm{g}_{L^2} \norm{f}_{L^2}.
			\end{align*}
			From the second line to the third line, we apply the decay of classical heat kernel \eqref{eq.heatkernelClassic}. Combining all the estimates above, we obtain that 
			\begin{multline}\label{eq.ErrorSource}
				\Ll\vert \bracket{G, (P_t -\Pb_t) F}_{\cL^2} \Rr\vert \\
				\leq  C\Ll(3^{-\alpha n} + t^{-\frac{1}{8}} +  3^n \tau^{-\frac{1}{2}} + 3^n t^{\frac{5}{8}} \tau^{-1} + (\tau/t)^{\frac{1}{2}}\Rr)\norm{g}_{L^2} \norm{f}_{L^2}.
			\end{multline}
			With a choice of mesoscopic scales $1 \ll 3^n \ll t^{\frac{5}{8}} \ll \tau \ll t$, for example $\tau = t^{\frac{3}{4}}, 3^n \simeq t^{\frac{1}{16}}$ we obtain the desired result \eqref{e.two.point} with a parameter $\beta = \frac{\min\{\alpha, 1\}}{16}$.
		\end{proof}
		\begin{remark}
			The errors in \eqref{eq.ErrorSource} have clear interpretation as the regularization, the homogenization and the price to fix the local slope to homogenization.
		\end{remark}
		\begin{proof}[Proof of Proposition~\ref{prop.HomoSemigroup}] Since both $P_t$ and $\Pb_t$ are conservative semigroups, we denote by $\hat{F}:= F - \rho \int_{\Rd} f \, \d m$ and have $(P_t - \Pb_t )F = (P_t - \Pb_t )\hat{F}$. Then using the self-adjoint property of the semigroup, we have
			\begin{align*}
				\norm{(P_t - \Pb_t )F}^2_{\cL^2} = \bracket{\hat{F}, P_{2t} \hat{F}}_{\cL^2} + \bracket{\hat{F}, \Pb_{2t} \hat{F}}_{\cL^2} - 2\bracket{\Pb_t \hat{F}, P_t \hat{F}}_{\cL^2}.
			\end{align*}
			Theorem~\ref{t.two.point} applies to the first and third terms, then we obtain the estimate of the semigroup. 
		\end{proof}
		
		\section{Convergence rate of Green--Kubo formulas}
		\label{sec:5}
		In this section, we start by giving more details on how to make sense of the integral in the formula \eqref{eq.quanti.gk}, and then prove Theorem~\ref{t.quanti.gk}.
		We follow the discussion in Section~\ref{subsec.Dirichletform} to construct the semigroup $P_t^{(m)}$. We recall that $\cL^2(\cu_m)=L^2(\mmd(\Rd), \mcl F_{\cu_m}, \Pr)$ is the space of square-integrable $\mcl F_{\cu_m}$-measurable functions, and define the Dirichlet form $\Ee^{\a}_{\cu_m}$ by
		\begin{align}\label{eq.DirichletCube}
			\Ll\{\begin{aligned}
				& \D(\Ee^{\a}_{\cu_m}) := \cH^1_0(\cu_m),\\
				& \displaystyle{\forall u,v \in \D(\Ee^{\a}_{\cu_m}), \quad \Ee^{\a}_{\cu_m}(u,v) := \Er\Ll[\int_{\cu_m} \frac{1}{2}\nabla u \cdot \a \nabla v \, \d \mu \Rr].}
			\end{aligned}\Rr.
		\end{align}
		The Dirichlet form $(\Ee^{\a}_{\cu_m}, \D(\Ee^{\a}_{\cu_m}))$ is also associated to a unique self-adjoint positive semi-definite operator $-A^{(m)}$
		\begin{align}\label{eq.defAm}
			\Ll\{\begin{array}{ll}
				\D(\Ee^{\a}_{\cu_m}) & = \D(\sqrt{-A^{(m)}}), \\
				\Ee^{\a}_{\cu_m}(u,v) & = \bracket{\sqrt{-A^{(m)}}u, \sqrt{-A^{(m)}}v}_{\cL^2}.
			\end{array}\Rr.
		\end{align}
		We define the semigroup $P_t^{(m)} := e^{t A^{(m)}}$. It satisfies similar properties as in Proposition~\ref{prop.ElementSemigroup}, and in particular, for every $u \in \cL^2(\cu_m)$ and $t > 0$, we have $P_t^{(m)} u \in \cH^1_0(\cu_m)$.
		
		By the definition \eqref{eq.defFpm}, the functional $F_{p,m}$ is an element in $\cH^{-1}(\cu_m)$. We write $\bracket{\cdot, \cdot}_{\cH^{-1}, \cH^1_0}$ as shorthand for $\bracket{\cdot, \cdot}_{\cH^{-1}(\cu_m), \cH^1_0(\cu_m)}$, since $m$ is kept fixed throughout this section. For each $t > 0$, we define $P_t^{(m)} (F_{p,m}) \in \cL^2(\cu_m)$ by duality as the unique function in $\cL^2(\cu_m)$ such that 
		\begin{align}\label{eq.defPFpm}
			\forall f \in \cL^2(\cu_m), \qquad \bracket{P_t^{(m)} (F_{p,m}), f}_{\cL^2} = \bracket{F_{p,m}, P_t^{(m)} f}_{\cH^{-1}, \cH^1_0}.
		\end{align}
		This definition of $P_t^{(m)}(F_{p,m})$ relies on the Riesz representation theorem and on the observation that for each $t > 0$, the mapping $f \mapsto  \bracket{F_{p,m}, P_t^{(m)} f}_{\cH^{-1}, \cH^1_0}$ is a continuous linear form on $\cL^2(\cu_m)$, since by \eqref{eq.L2Decay},
		\begin{equation}\label{eq.L2FpmBound}
			\begin{split}
				\Ll\vert \bracket{F_{p,m}, P_t^{(m)} f}_{\cH^{-1}, \cH^1_0} \Rr\vert &\leq \norm{F_{p,m}}_{\cH^{-1}} \norm{P_t^{(m)} f}_{\cH^1} \\
				& \leq  t^{-\frac{1}{2}}\norm{F_{p,m}}_{\cH^{-1}} \norm{f}_{\cL^2}.
			\end{split}
		\end{equation}

		The next proposition collects additional properties satisfied by $F_{p,m}$ and $P_t^{(m)} (F_{p,m})$.
		\begin{proposition}\label{prop.Fpm}
			The following properties are valid.
			\begin{enumerate}
				\item With $\phi_{p,\cu_m}$ defined in \eqref{eq.corrector}, we have for every $ v$ in $\cH^1_0(\cu_m)$ that
				\begin{align}\label{eq.FpmCorrector}
					\bracket{F_{p,m}, v}_{\cH^{-1}, \cH^1_0} = \Ee^{\a}_{\cu_m}(\phi_{p,\cu_m}, v),
				\end{align}
				and 
				\begin{align}\label{eq.FpmContinuity}
					\lim_{t\to 0}\bracket{P_t^{(m)} (F_{p,m}), v}_{\cL^2} = \bracket{F_{p,m}, v}_{\cH^{-1}, \cH^1_0}. 
				\end{align}
				\item For every $t,s >0$, we have $P_{t+s}^{(m)} (F_{p,m}) = P_{t}^{(m)}( P_{s}^{(m)} (F_{p,m}))$.
				\item There exists $C(d) < +\infty$ such that for every $t >0$, we have that $P_t^{(m)} (F_{p,m})$ belongs to $\cH^1_0(\cu_m)$, is centered, and satisfies
				\begin{align}
					\norm{P_t^{(m)} (F_{p,m})}_{\cL^2} &\leq 4 \exp(-3^{-2m}t/C)  t^{-\frac{1}{2}}\norm{F}_{\cH^{-1}}, \label{eq.FpmL2Decay}\\
					\norm{P_t^{(m)} (F_{p,m})}_{\acH^1} &\leq 4 \exp(-3^{-2m}t/C)  t^{-1} \norm{F}_{\cH^{-1}}. \label{eq.FpmH1Decay}
				\end{align}
			\end{enumerate}
		\end{proposition}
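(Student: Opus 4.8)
The plan is to verify the three items in turn, each reducing to inserting the right test function into a definition. For the identity \eqref{eq.FpmCorrector}, I would use that $\phi_{p,\cu_m}=v(\cdot,\cu_m,p)-\ell_{p,\cu_m}$ and that, straight from the definition \eqref{e.def.deriv} of the gradient, $\nabla\ell_{p,\cu_m}(\mu,x)=p$ for every $\mu\in\mmd(\Rd)$ and $x\in\cu_m$; the first optimality condition in \eqref{eq:VariationalFormula72} then reads, for $v\in\cH^1_0(\cu_m)$ and after using the symmetry of $\a$, $\Er\bigl[\int_{\cu_m}\nabla v\cdot\a\nabla\phi_{p,\cu_m}\,\d\mu\bigr]=-\Er\bigl[\int_{\cu_m}p\cdot\a\nabla v\,\d\mu\bigr]$, and dividing by $2$ and comparing with \eqref{eq.defFpm} gives \eqref{eq.FpmCorrector}. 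For the continuity \eqref{eq.FpmContinuity}, I would use \eqref{eq.defPFpm} (legitimate since $v\in\cH^1_0(\cu_m)\subseteq\cL^2(\cu_m)$) to write $\bracket{P_t^{(m)}(F_{p,m}),v}_{\cL^2}=\bracket{F_{p,m},P_t^{(m)}v}_{\cH^{-1},\cH^1_0}$, then \eqref{eq.FpmCorrector} to turn this into $\Ee^{\a}_{\cu_m}(\phi_{p,\cu_m},P_t^{(m)}v)=\bracket{\sqrt{-A^{(m)}}\phi_{p,\cu_m},\,P_t^{(m)}\sqrt{-A^{(m)}}v}_{\cL^2}$, using that $P_t^{(m)}$ commutes with $\sqrt{-A^{(m)}}$ on its domain; since $v\in\D(\sqrt{-A^{(m)}})$ and $(P_t^{(m)})_{t\ge0}$ is strongly continuous on $\cL^2(\cu_m)$, sending $t\downarrow0$ recovers $\Ee^{\a}_{\cu_m}(\phi_{p,\cu_m},v)=\bracket{F_{p,m},v}_{\cH^{-1},\cH^1_0}$.

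For the semigroup identity (2), I would test both sides against an arbitrary $f\in\cL^2(\cu_m)$: the left side equals $\bracket{F_{p,m},P_{t+s}^{(m)}f}_{\cH^{-1},\cH^1_0}$ by \eqref{eq.defPFpm} and the ordinary semigroup law on $\cL^2(\cu_m)$, while the right side equals $\bracket{P_s^{(m)}(F_{p,m}),P_t^{(m)}f}_{\cL^2}=\bracket{F_{p,m},P_s^{(m)}(P_t^{(m)}f)}_{\cH^{-1},\cH^1_0}$, using self-adjointness of $P_t^{(m)}$ on $\cL^2(\cu_m)$, that $P_s^{(m)}(F_{p,m})\in\cL^2(\cu_m)$, and that $P_t^{(m)}f\in\cH^1_0(\cu_m)$ as recalled after \eqref{eq.defAm}; the two sides coincide since $P_s^{(m)}P_t^{(m)}f=P_{t+s}^{(m)}f$. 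For (3), the inclusion $P_t^{(m)}(F_{p,m})\in\cH^1_0(\cu_m)$ follows from (2) written as $P_t^{(m)}(F_{p,m})=P_{t/2}^{(m)}(P_{t/2}^{(m)}(F_{p,m}))$, since $P_{t/2}^{(m)}(F_{p,m})\in\cL^2(\cu_m)$ and $P_{t/2}^{(m)}$ maps $\cL^2(\cu_m)$ into $\cH^1_0(\cu_m)$. For centeredness I would test against $\1\in\cH^1_0(\cu_m)$: as $\nabla\1=0$ we have $\Ee^{\a}_{\cu_m}(\1,\1)=0$, hence $\1\in\ker A^{(m)}$ and $P_t^{(m)}\1=\1$, so \eqref{eq.defPFpm} gives $\Er[P_t^{(m)}(F_{p,m})]=\bracket{F_{p,m},\1}_{\cH^{-1},\cH^1_0}=0$.

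The quantitative bounds are where the actual work sits. Poincaré's inequality \eqref{e.Poincare} on $\cu_m$ together with uniform ellipticity gives a spectral gap: $-A^{(m)}$ restricted to the orthogonal complement of $\1$ in $\cL^2(\cu_m)$ is bounded below by $c\,3^{-2m}$, hence $\norm{P_t^{(m)}h}_{\cL^2}\le e^{-c3^{-2m}t}\norm{h}_{\cL^2}$ for centered $h$. Applying this to the outer factor in $P_t^{(m)}(F_{p,m})=P_{t/2}^{(m)}(P_{t/2}^{(m)}(F_{p,m}))$ — legitimate since $P_{t/2}^{(m)}(F_{p,m})$ is centered — and bounding the inner factor by \eqref{eq.L2FpmBound}, namely $\norm{P_{t/2}^{(m)}(F_{p,m})}_{\cL^2}\le(t/2)^{-1/2}\norm{F_{p,m}}_{\cH^{-1}}$, produces \eqref{eq.FpmL2Decay} after adjusting $C$. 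For \eqref{eq.FpmH1Decay}, I would observe that on centered elements of $\cH^1_0(\cu_m)$ the norm $\norm{\cdot}_{\acH^1(\cu_m)}$ is comparable to $\Ee^{\a}_{\cu_m}(\cdot,\cdot)^{1/2}$ (again by \eqref{e.Poincare} and ellipticity, using $|\cu_m|^{-2/d}=3^{-2m}$); then, writing $P_t^{(m)}(F_{p,m})=P_{t/2}^{(m)}(g)$ with $g=P_{t/2}^{(m)}(F_{p,m})$, the energy-decay bound \eqref{eq.L2Decay} gives $\Ee^{\a}_{\cu_m}(P_{t/2}^{(m)}g,P_{t/2}^{(m)}g)\le t^{-1}\norm{g}_{\cL^2}^2$, and feeding \eqref{eq.FpmL2Decay} at time $t/2$ into $\norm{g}_{\cL^2}$ carries the exponential factor through and yields \eqref{eq.FpmH1Decay}.

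The hard part will not be any single deep step but rather the bookkeeping: keeping every duality pairing well posed (checking at each move that the test function lies in $\cL^2(\cu_m)$ or in $\cH^1_0(\cu_m)$ as needed, and that the ``action'' of $P_t^{(m)}$ on $F_{p,m}$ is always read through the definition \eqref{eq.defPFpm} of an $\cL^2(\cu_m)$-element), and, in part (3), tracking constants so that the decay rate $\exp(-3^{-2m}t/C)$ and the prefactors $t^{-1/2}$ and $t^{-1}$ come out in exactly the stated form.
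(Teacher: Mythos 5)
Your proposal is correct and follows essentially the same route as the paper's proof: part (1) via the first-order optimality condition \eqref{eq:VariationalFormula72} plus spectral/semigroup continuity (the paper invokes Proposition~\ref{prop.ElementSemigroup}(2) where you argue via commutation of $P_t^{(m)}$ with $\sqrt{-A^{(m)}}$, but the underlying spectral mechanism is identical); part (2) by testing against $f\in\cL^2(\cu_m)$ and using self-adjointness and the semigroup law; part (3) by factoring $P_t^{(m)}=P_{t/2}^{(m)}\circ P_{t/2}^{(m)}$, establishing centeredness through $\1\in\cH^1_0(\cu_m)$, and combining the spectral gap from Poincar\'e's inequality \eqref{e.Poincare} with the smoothing bound \eqref{eq.L2FpmBound} for the $\cL^2$ decay and with \eqref{eq.L2Decay} for the $\acH^1$ decay.
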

		\begin{proof} 
			(1). Equation \eqref{eq.FpmCorrector} is a consequence of \eqref{eq.corrector} and \eqref{eq:VariationalFormula72}, which we combine with the definition of \eqref{eq.defFpm} and \eqref{eq.L2FpmBound} to obtain 
			\begin{align*}
				\bracket{P_t^{(m)} (F_{p,m}), v}_{\cL^2} = \bracket{F_{p,m}, P_t^{(m)} v}_{\cH^{-1}, \cH^1_0} = \Ee^{\a}_{\cu_m}(\phi_{p,\cu_m}, P_t^{(m)} v).
			\end{align*}
			The continuity of the Dirichlet energy in (2) of Proposition~\ref{prop.ElementSemigroup} implies that
			\begin{multline*}
				\lim_{t \to 0} \bracket{P_t^{(m)} (F_{p,m}), v}_{\cL^2} = \lim_{t \to 0} \Ee^{\a}_{\cu_m}(\phi_{p,\cu_m}, P_t^{(m)} v) \\
				=  \Ee^{\a}_{\cu_m}(\phi_{p,\cu_m}, v) = \bracket{F_{p,m}, v}_{\cH^{-1}, \cH^1_0}.
			\end{multline*} 
			
			(2). We test $P_{t}^{(m)}( P_{s}^{(m)} (F_{p,m}))$ with a function $f \in \cL^2(\cu_m)$
			\begin{align*}
				\bracket{P_{t}^{(m)}( P_{s}^{(m)} (F_{p,m})), f}_{\cL^2} &= \bracket{ P_{s}^{(m)} (F_{p,m}), P_{t}^{(m)} f}_{\cL^2}\\
				&= \bracket{F_{p,m}, P_{s}^{(m)} (P_{t}^{(m)} f)}_{\cL^2}\\
				&= \bracket{F_{p,m}, P_{t+s}^{(m)} f}_{\cL^2}.
			\end{align*}
			In the first line, we use the reversibility of $P_{t}^{(m)}$ with respect to $\Pr$, and then we apply the definition \eqref{eq.defFpm} from the first line to the second line. The result in the third line follows the semigroup property of $P_{t+s}^{(m)}$. Since it satisfies the characterization of $P_{t+s}^{(m)} (F_{p,m})$, the two quantities coincide. 
			
			(3). Using the property in (2), we have $P_t^{(m)} (F_{p,m}) = P_{t/2}^{(m)} \Ll(P_{t/2}^{(m)} (F_{p,m})\Rr)$, so the first semigroup $P_{t/2}^{(m)}$ maps the function from $\cL^2(\cu_m)$ to $\cH^1(\cu_m)$. $P_t^{(m)} (F_{p,m})$ is centered because we test \eqref{eq.defFpm} with constant function $f \equiv 1$ and it will give $0$ on the {\rhs}. Once we know the function is centered, we can apply the spectral gap inequality \eqref{e.Poincare} that
			\begin{align*}
				\frac{\d}{\d t} \norm{P_t^{(m)} (F_{p,m})}_{\cL^2}^2 &= - 2 \Ee^{\a}_{\cu_m}(P_t^{(m)} (F_{p,m}),P_t^{(m)} (F_{p,m}))\\ &\leq -\frac{3^{-2m}}{C} \norm{P_t^{(m)} (F_{p,m})}_{\cL^2}^2.
			\end{align*}
			This results in that $\norm{P_t^{(m)} (F_{p,m})}_{\cL^2}^2 \leq \exp(-3^{-2m}t/(2C)) \norm{P_{t/2}^{(m)} (F_{p,m})}_{\cL^2}^2$. Then we apply the estimate \eqref{eq.L2FpmBound} for $\norm{P_{t/2}^{(m)} (F_{p,m})}_{\cL^2}^2$ and obtain \eqref{eq.FpmL2Decay}. One more factor $t^{-1/2}$ can be gained when applying \eqref{eq.L2Decay} to the $\acH^1(\cu_m)$ estimate, which yields \eqref{eq.FpmH1Decay}.
		\end{proof}
		
		Using the properties above, we can justify that the integral in  \eqref{eq.quanti.gk} is well-defined. We write $\ell_{p,m}:= \int_{\cu_m} p \cdot x \, \d \mu(x)$ as the affine function in $\cu_m$.
		\begin{lemma}\label{lem.CurrentCorrelation}
			For every $m \in \N$ and $\lambda \geq 0$, the integral of the dynamic current-current correlation  
			\begin{multline}\label{eq.defCurrentCorrelation}
				\int_0^{+\infty}  e^{-\lambda t}  \la F_{p,m}, P_t^{(m)}(F_{p,m}) \rah \, \d t \\
				:= \lim_{\epsilon \to 0} \lim_{T \to \infty} \int_\epsilon^{T}  e^{-\lambda t}  \la F_{p,m}, P_t^{(m)}(F_{p,m}) \rah \, \d t,
			\end{multline}
			is well-defined. It is equal to 
			\begin{align}\label{eq.ValueCurrentCorrelation}
				\Er\Ll[\int_{\cu_m} \Ll(\frac{1}{2} p \cdot \a p -  \frac{1}{2} p \cdot \a \nabla U_\lambda\Rr) \, \d \mu \Rr],
			\end{align}
			where $U_\lambda$ is the unique centered solution in $\ell_{p,m} + \cH^1_0(\cu_m)$ such that   
			\begin{align}\label{eq.defUlambda}
				\forall v \in \cH^1_0(\cu_m), \qquad \lambda \bracket{U_{\lambda}, v}_{\cL^2} + \Ee^{\a}_{\cu_m}(U_{\lambda}, v) = \lambda \bracket{\ell_{p,m}, v}_{\cL^2}.
			\end{align} 
		\end{lemma}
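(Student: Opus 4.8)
The plan is to rewrite the dynamic current--current correlation in terms of the corrector $\phi:=\phi_{p,\cu_m}\in\cH^1_0(\cu_m)$ and the semigroup $P^{(m)}_t$, to integrate by parts in $t$ in order to control the non-integrable singularity of $P^{(m)}_t(F_{p,m})$ near $t=0$, and finally to recognise the resulting quantity through the resolvent equation \eqref{eq.defUlambda}. Recall that $\phi$ is a centered element of $\cH^1_0(\cu_m)$ by \eqref{eq.corrector}, and write $\psi_t:=P^{(m)}_t(F_{p,m})$; by Proposition~\ref{prop.Fpm}, for $t>0$ the function $\psi_t$ is a centered element of $\cH^1_0(\cu_m)$, satisfies $\psi_{t+s}=P^{(m)}_s\psi_t$, obeys the bounds \eqref{eq.FpmL2Decay}--\eqref{eq.FpmH1Decay}, and $t\mapsto\psi_t$ is $C^1$ from $(0,+\infty)$ into $\cL^2(\cu_m)$ with values in $\D(A^{(m)})$ and $\partial_t\psi_t=A^{(m)}\psi_t$. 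In particular $t\mapsto\la F_{p,m},\psi_t\rah$ is continuous on $(0,+\infty)$, and \eqref{eq.FpmH1Decay} shows that for each $\epsilon>0$ the inner integral $\int_\epsilon^T e^{-\lambda t}\la F_{p,m},\psi_t\rah\,\d t$ converges absolutely as $T\to+\infty$.

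\textbf{Integration by parts and well-posedness.} Combining $\partial_t\psi_t=A^{(m)}\psi_t$ with \eqref{eq.defAm} and \eqref{eq.FpmCorrector} gives, for $t>0$,
\[
\frac{\d}{\d t}\bracket{\phi,\psi_t}_{\cL^2}=\bracket{\phi,A^{(m)}\psi_t}_{\cL^2}=-\Ee^\a_{\cu_m}(\phi,\psi_t)=-\la F_{p,m},\psi_t\rah .
\]
Integrating by parts on $[\epsilon,T]$ and using \eqref{eq.FpmL2Decay} (which bounds $|\bracket{\phi,\psi_t}_{\cL^2}|\le\norm{\phi}_{\cL^2}\norm{\psi_t}_{\cL^2}$ by a function that is integrable on $(0,+\infty)$ and exponentially small at infinity), we may first let $T\to+\infty$, which kills the boundary term at $T$, and then $\epsilon\to0$, invoking the continuity \eqref{eq.FpmContinuity} to get $\bracket{\phi,\psi_\epsilon}_{\cL^2}\to\la F_{p,m},\phi\rah$. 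This proves that the iterated limit exists and that
\[
\int_0^{+\infty}e^{-\lambda t}\la F_{p,m},P^{(m)}_t(F_{p,m})\rah\,\d t=\la F_{p,m},\phi\rah-\lambda\int_0^{+\infty}e^{-\lambda t}\bracket{\phi,\psi_t}_{\cL^2}\,\d t .
\]

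\textbf{Identification via the resolvent.} By \eqref{eq.defPFpm} with $f=\phi$ and by \eqref{eq.FpmCorrector}, $\bracket{\phi,\psi_t}_{\cL^2}=\la F_{p,m},P^{(m)}_t\phi\rah$. Since $\phi$ is centered, the decay estimates for $P^{(m)}_t$ (Proposition~\ref{prop.ElementSemigroup} together with the spectral gap \eqref{e.Poincare}) make $t\mapsto e^{-\lambda t}P^{(m)}_t\phi$ Bochner-integrable from $(0,+\infty)$ into $\cH^1_0(\cu_m)$; thus $R_\lambda\phi:=\int_0^{+\infty}e^{-\lambda t}P^{(m)}_t\phi\,\d t\in\cH^1_0(\cu_m)$ (this is $(\lambda-A^{(m)})^{-1}\phi$ if $\lambda>0$ and $(-A^{(m)})^{-1}\phi$ on the centered subspace if $\lambda=0$), and the continuous functional $\la F_{p,m},\cdot\rah$ commutes with this integral, so that $\int_0^{+\infty}e^{-\lambda t}\bracket{\phi,\psi_t}_{\cL^2}\,\d t=\la F_{p,m},R_\lambda\phi\rah=\Ee^\a_{\cu_m}(\phi,R_\lambda\phi)$. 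Setting $\chi_\lambda:=\phi-\lambda R_\lambda\phi\in\cH^1_0(\cu_m)$ (so $\chi_0=\phi$), the two previous displays combine to
\[
\int_0^{+\infty}e^{-\lambda t}\la F_{p,m},P^{(m)}_t(F_{p,m})\rah\,\d t=\Ee^\a_{\cu_m}(\phi,\phi)-\lambda\,\Ee^\a_{\cu_m}(\phi,R_\lambda\phi)=\Ee^\a_{\cu_m}(\phi,\chi_\lambda).
\]
A direct computation using the resolvent identity $(-A^{(m)})R_\lambda=\mathrm{Id}-\lambda R_\lambda$ (on centered functions), the relation \eqref{eq.defAm}, and \eqref{eq.FpmCorrector} shows that $\chi_\lambda$ is a centered element of $\cH^1_0(\cu_m)$ solving $\lambda\bracket{\chi_\lambda,v}_{\cL^2}+\Ee^\a_{\cu_m}(\chi_\lambda,v)=\la F_{p,m},v\rah$ for every $v\in\cH^1_0(\cu_m)$; by \eqref{eq.defFpm} and $\nabla\ell_{p,m}=p$, this is precisely the equation \eqref{eq.defUlambda} satisfied by $U_\lambda-\ell_{p,m}$, whose centered solution in $\cH^1_0(\cu_m)$ is unique by Lax--Milgram (coercivity in the borderline case $\lambda=0$ coming from \eqref{e.Poincare}). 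Hence $\chi_\lambda=U_\lambda-\ell_{p,m}$.

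\textbf{Conclusion and main difficulty.} By \eqref{eq.FpmCorrector} applied with $\chi_\lambda\in\cH^1_0(\cu_m)$, then by the definition \eqref{eq.defFpm}, and using $\nabla\chi_\lambda=\nabla U_\lambda-p$,
\[
\Ee^\a_{\cu_m}(\phi,\chi_\lambda)=\la F_{p,m},\chi_\lambda\rah=\Er\Ll[\int_{\cu_m}\Ll(-\tfrac12 p\cdot\a\nabla\chi_\lambda\Rr)\d\mu\Rr]=\Er\Ll[\int_{\cu_m}\Ll(\tfrac12 p\cdot\a p-\tfrac12 p\cdot\a\nabla U_\lambda\Rr)\d\mu\Rr],
\]
which is \eqref{eq.ValueCurrentCorrelation}. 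The step I expect to be the main obstacle is exactly the one handled in the second paragraph: $P^{(m)}_t(F_{p,m})$ is only a $\cL^2(\cu_m)$ (resp.\ $\cH^1_0(\cu_m)$) function for $t>0$, with norm of order $t^{-1/2}$ (resp.\ $t^{-1}$) as $t\to0$, so $\la F_{p,m},P^{(m)}_t(F_{p,m})\rah$ is not absolutely integrable near $t=0$ and a naive Fubini-type interchange (or applying $\la F_{p,m},\cdot\rah$ directly to a Bochner integral of $\psi_t$ in $\cH^1_0$) is not licit; it is the integration by parts, powered by the continuity property \eqref{eq.FpmContinuity}, that gives the singular integral a meaning by trading it for the convergent boundary term $\bracket{\phi,\psi_\epsilon}_{\cL^2}$.
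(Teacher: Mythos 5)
Your proof is correct, and it takes a genuinely different route from the paper's. The paper defines the Bochner integral $V_{\lambda,\epsilon} := \int_\epsilon^\infty e^{-\lambda t}P_t^{(m)}(F_{p,m})\,\d t$ (convergent in $\cH^1_0(\cu_m)$ by \eqref{eq.FpmL2Decay}--\eqref{eq.FpmH1Decay}), derives the resolvent-type equation \eqref{eq.defVlambda} it satisfies, obtains the uniform a priori bound \eqref{eq.VUniformBound} by energy methods, extracts a weak $\cH^1_0$-limit $V_\lambda$ as $\epsilon\to0$ identified through \eqref{eq.FpmContinuity}, and then rewrites the current-current integral as $\bracket{F_{p,m},V_{\lambda,\epsilon}}_{\cH^{-1},\cH^1_0}=\Ee^{\a}_{\cu_m}(\phi_{p,\cu_m},V_{\lambda,\epsilon})$ and passes to the weak limit. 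You instead integrate by parts in time, using $\la F_{p,m},\psi_t\rah = -\tfrac{\d}{\d t}\bracket{\phi,\psi_t}_{\cL^2}$ to trade the non-integrable singularity near $t=0$ for the boundary term $\bracket{\phi,\psi_\epsilon}_{\cL^2}$, which converges by the same continuity property \eqref{eq.FpmContinuity}, and then handle the remaining absolutely convergent integral by the explicit spectral resolvent $(\lambda-A^{(m)})^{-1}$, via the identity $\chi_\lambda := \phi - \lambda R_\lambda\phi = -A^{(m)}R_\lambda\phi$. Both approaches arrive at the same variational characterization of $U_\lambda-\ell_{p,m}$; your route is more explicitly spectral-theoretic and dispenses with the weak-compactness extraction, while the paper's is more variational and produces the a priori bound \eqref{eq.VUniformBound} as a by-product. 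Both handle $\lambda=0$ correctly: you because $\chi_0=\phi$ so $R_0$ is never actually invoked (and is in any case well-defined on centered functions thanks to the spectral gap \eqref{e.Poincare}), the paper because \eqref{e.Poincare} supplies the coercivity needed for Young's inequality in the a priori bound.
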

		\begin{proof}
			Thanks to the exponential decay \eqref{eq.FpmL2Decay} and \eqref{eq.FpmH1Decay}, the integral 
			\begin{align*}
				V_{\lambda, \epsilon} := \lim_{T \to \infty} \int_\epsilon^{T} e^{-\lambda t} P_t^{(m)}(F_{p,m}) \, \d t,
			\end{align*}
			converges in $\cH^1_0(\cu_m)$, and the limit satisfies
			\begin{align}\label{eq.defVlambda}
				\forall v \in \cH^1_0(\cu_m), \qquad \lambda \bracket{V_{\lambda, \epsilon}, v}_{\cL^2} + \Ee^{\a}_{\cu_m}(V_{\lambda, \epsilon}, v) = \bracket{e^{-\lambda \epsilon}P_\epsilon^{(m)}(F_{p,m}), v}_{\cL^2}.
			\end{align}
			To see this, we test $V_{\lambda, \epsilon}$ with the {\lhs} of this equation, and apply \eqref{eq.defAm} 
			\begin{align*}
				\lambda \bracket{V_{\lambda, \epsilon}, v}_{\cL^2} + \Ee^{\a}_{\cu_m}(V_{\lambda, \epsilon}, v) &= \bracket{\int_\epsilon^\infty  (\lambda - A^{(m)}) e^{-t (\lambda - A^{(m)})}(F_{p,m}) \, \d t, v}_{\cL^2} \\
				&= \bracket{e^{-\lambda \epsilon}P_\epsilon^{(m)}(F_{p,m}), v}_{\cL^2}.
			\end{align*}		
			We test the equation \eqref{eq.defVlambda} with $V_{\lambda, \epsilon}$ and apply Young's inequality (for the case $\lambda=0$, we also need \eqref{e.Poincare}), then obtain that 
			\begin{align}\label{eq.VUniformBound}
				\sup_{\epsilon > 0}\norm{V_{\lambda, \epsilon}}_{\acH^1(\cu_m)} \leq C \norm{F_{p,m}}_{\cH^{-1}(\cu_m)}.
			\end{align}
			Therefore, the sequence admits at least one weak limit in the space $\cH^1_0(\cu_m)$. Moreover, when we pass $\epsilon \to 0$ in \eqref{eq.defVlambda} and apply \eqref{eq.FpmContinuity}, the weak limit $V_{\lambda}$ is unique which is characterized by 
			\begin{align}\label{eq.defV0}
				\forall v \in \cH^1_0(\cu_m), \qquad \lambda \bracket{V_{\lambda}, v}_{\cL^2} + \Ee^{\a}_{\cu_m}(V_{\lambda}, v) = \bracket{F_{p,m}, v}_{\cH^{-1}, \cH^1_0}.
			\end{align}
			Now we test $V_{\lambda, \epsilon}$ with $F_{p,m}$
			\begin{equation}\label{eq.DynamicLimit}
				\begin{split}
					\lim_{\epsilon \to 0} \lim_{T \to \infty} \int_\epsilon^{T}  e^{-\lambda t}  \la F_{p,m}, P_t^{(m)}(F_{p,m}) \rah \, \d t 
					&=\lim_{\epsilon \to 0} \bracket{F_{p,m}, V_{\lambda, \epsilon}}_{\cH^{-1}, \cH^1_0}\\
					&=\lim_{\epsilon \to 0} \Ee^{\a}_{\cu_m}(\phi_{p, \cu_m}, V_{\lambda, \epsilon})\\
					&= \Ee^{\a}_{\cu_m}(\phi_{p, \cu_m}, V_{\lambda})\\
					&= \bracket{F_{p,m},  V_{\lambda}}_{\cH^{-1}, \cH^1_0}.
				\end{split}
			\end{equation}
			Here we apply \eqref{eq.FpmCorrector} in the second line, and then pass to the limit using $V_{\lambda, \epsilon} \stackrel{\epsilon \to 0}{\rightharpoonup} V_{\lambda}$ in $\cH^1_0(\cu_m)$ from the second line to the third line. This justifies that \eqref{eq.defCurrentCorrelation} is well-defined. Finally, we define $U_\lambda := \ell_{p,m} + V_\lambda$, then \eqref{eq.defV0} becomes \eqref{eq.defUlambda} and we obtain the limit of the dynamic current correlation \eqref{eq.ValueCurrentCorrelation} using $U_\lambda$.
		\end{proof}

		We are now ready to prove the quantitative Green--Kubo formula in Theorem~\ref{t.quanti.gk}.
		
		\begin{proof}[Proof of Theorem~\ref{t.quanti.gk}]
			As discussed in Lemma~\ref{lem.CurrentCorrelation}, we use the stationarity  \eqref{e.def.amu} and insert \eqref{eq.ValueCurrentCorrelation} in the {\lhs} of \eqref{eq.quanti.gk} and it suffices to estimate 
			\begin{align}\label{eq.GreenKuboReduced} 
				\Ll\vert \frac{1}{2} p \cdot \ab p - \frac{1}{\rho |\cu_m|} \E_\rho \Ll[\int_{\cu_m} \frac{1}{2} p \cdot \a \nabla U_\lambda \, \d \mu \Rr]\Rr\vert,
			\end{align}
			with $U_\lambda$ defined in \eqref{eq.defUlambda}. We notice that the homogenized solution $\Ub \in \ell_{p,m} + \cH^1_0(\cu_m)$ solving  
			\begin{align}\label{eq.GKElliptic2}
				(\lambda - \nabla \cdot \ab \nabla )\Ub =  \lambda \ell_{p,m},
			\end{align}
			is exactly $\ell_{p,m}$ using a similar argument in \eqref{eq.IPP}. Then the main idea is to replace $U_\lambda$ in \eqref{eq.GreenKuboReduced} by its two-scale expansion to get the estimate.
			
			\smallskip
			\textit{Step~1: two-scale expansion.} We analyze the structure of $U_\lambda$ using the two-scale expansion technique \eqref{eq.defW} from Section~\ref{sec:3}, setting
			\begin{align}\label{eq.GKdefW}
				W := \ell_{p,m} + \sum_{i=1}^d \sum_{z \in \Z_{m,n}} p_i \phi_{e_i, z+\cu_n} = \ell_{p,m} + \sum_{z \in \Z_{m,n}} \phi_{p, z+\cu_n}. 
			\end{align}
			Here $p_i$ is the $i$-th coordinate of the vector $p$, and we use the linearity of the corrector as a function of $p$. Without loss of generality, we suppose $\vert p \vert = 1$. We can establish the following equation using \eqref{eq.defUlambda} and \eqref{eq.GKElliptic2} 
			\begin{align*}
				(\lambda - \nabla \cdot \a \nabla )(W - U_\lambda) &= 	(\lambda - \nabla \cdot \a \nabla )W - 	(\lambda - \nabla \cdot \ab \nabla )\Ub\\
				&= \lambda (W-\Ub) - \nabla \cdot (\a \nabla W - \ab \nabla \Ub).
			\end{align*}
			We test the equation above with $(W-U_\lambda)$ as their affine parts cancel and ${(W-U_\lambda)} \in \cH^1_0(\cu_m)$. This gives us 
			\begin{multline}\label{eq.GKEllipticDecom}
				\lambda \norm{W-U_\lambda}^2_{\cL^2} + \Er \Ll[ \int_{\cu_m} \nabla (W - U_\lambda) \cdot \a \nabla (W - U_\lambda) \, \d \mu\Rr]   \\
				\leq \lambda \vert \Er[(W-U_\lambda)(W-\Ub)]\vert + \Ll\vert \Er\Ll[\int_{\cu_m} \nabla (W-U_\lambda) \cdot  (\a \nabla W - \ab \nabla \Ub) \, \d \mu\Rr]\Rr\vert. 
			\end{multline}
			Its first term can be estimated by following Step~3 in the proof of Proposition~\ref{prop.TwoScaleElliptic}, 
			\begin{align}\label{eq.GKL2}
				\nonumber \lambda \vert \Er[(W-U_\lambda)(W-\Ub)]\vert  &\leq \frac{\lambda}{2} \norm{W-U_\lambda}^2_{\cL^2} + \frac{\lambda}{2} \norm{W-\Ub}^2_{\cL^2} \\
				&\leq \frac{\lambda}{2} \norm{W-U_\lambda}^2_{\cL^2} + \frac{C \lambda 3^{2n}}{2} \norm{\ell_{p,m}}^2_{\acH^1(\cu_m)}. 
			\end{align}
			For the second term on the {\rhs} of \eqref{eq.GKEllipticDecom}, we follow Step~2 in the proof of Proposition~\ref{prop.TwoScaleElliptic}, especially the estimate \eqref{eq.TwoScaleMixed}, to get
			\begin{equation}\label{eq.GKH1}
				\begin{split}
					&\Ll\vert \Er\Ll[\int_{\cu_m} \nabla (W-U_\lambda) \cdot  (\a \nabla W - \ab \nabla \Ub) \, \d \mu\Rr]\Rr\vert \\
					&\leq C 3^{-\alpha n}\norm{W-U_\lambda}_{\acH^1(\cu_m)} \norm{\ell_{p,m}}_{\acH^1(\cu_m)} \\
					& \leq \frac{C}{2}\norm{W-U_\lambda}^2_{\acH^1(\cu_m)}  + \frac{C 3^{-2\alpha n}}{2} \norm{\ell_{p,m}}^2_{\acH^1(\cu_m)}.
				\end{split}
			\end{equation}
			Notice that we do not have the error corresponding to the term $ 3^{2n} \vert\nabla \nabla \ub\vert^2$ in \eqref{eq.TwoScaleMixed}, because the second derivative of the affine function $\ell_{p,m}$ vanishes.
			
			We put the estimate \eqref{eq.GKH1} and \eqref{eq.GKL2} back to \eqref{eq.GKEllipticDecom}, which results in
			\begin{multline}\label{eq.GKTwoScale}
				\lambda \norm{W-U_\lambda}^2_{\cL^2} + \Er \Ll[ \int_{\cu_m} \nabla (W - U_\lambda) \cdot \a \nabla (W - U_\lambda) \, \d \mu\Rr] \\
				\leq C(\lambda 3^{2n} + 3^{-2\alpha n})\norm{\ell_{p,m}}^2_{\acH^1(\cu_m)}.
			\end{multline}

			\smallskip
			\textit{Step~2: replacement in the Green--Kubo formula.} Now, we replace $U_\lambda$ in \eqref{eq.GreenKuboReduced} by $W$ defined in \eqref{eq.GKdefW},
			\begin{align*}
				&\Ll\vert p \cdot \ab p - \frac{1}{\rho |\cu_m|} \E_\rho \Ll[\int_{\cu_m} p \cdot \a \nabla U_{\lambda}\, \d \mu \Rr]\Rr\vert \\
				& \leq \Ll\vert p \cdot \ab p - \frac{1}{\rho |\cu_m|} \E_\rho \Ll[\int_{\cu_m} p \cdot \a \nabla W \, \d \mu \Rr]\Rr\vert + \Ll\vert \frac{1}{\rho |\cu_m|} \E_\rho \Ll[\int_{\cu_m} p \cdot \a \nabla (W - U_\lambda) \, \d \mu \Rr]\Rr\vert \\
				& = \Ll\vert p \cdot \ab p - \frac{1}{\rho |\cu_n|} \E_\rho \Ll[\int_{\cu_n} p \cdot \a (p + \nabla \phi_{p, \cu_n})  \, \d \mu \Rr]\Rr\vert + \Ll\vert \frac{1}{\rho |\cu_m|} \E_\rho \Ll[\int_{\cu_m} p \cdot \a \nabla (W - U_\lambda) \, \d \mu \Rr]\Rr\vert. 
			\end{align*}
			Compared to $U_\lambda$, its two-scale expansion has a better structure and the first term above becomes the homogenization of the diffusion matrix. We apply \eqref{eq:QuantConvergence} to the first term, and \eqref{eq.GKTwoScale} to the second term, then obtain
			\begin{align}\label{eq.GreenKuboBoundMixed}
				\Ll\vert p \cdot \ab p - \frac{1}{\rho |\cu_m|} \E_\rho \Ll[\int_{\cu_m} p \cdot \a \nabla U_{\lambda}\, \d \mu \Rr]\Rr\vert \leq C(\lambda^{\frac{1}{2}} 3^{n} + 3^{-\alpha n}).
			\end{align}
			
			\smallskip
			\textit{Step~3: choice of the mesoscopic scale.} 
			Finally, we need a reasonable choice of the mesoscopic scale $3^n$ to quantify the error term in \eqref{eq.GreenKuboBoundMixed}.
			\begin{itemize}
				\item Regime $\lambda \in [1, \infty)$. This case is trivial, as the regularization from the resolvent is too strong to see any homogenization, thus our bound is of constant order. To analyze it, instead of applying \eqref{eq.GreenKuboBoundMixed}, we turn back directly to the last line \eqref{eq.DynamicLimit}, and use the $\cH^1$ estimate of $V_\lambda$ from \eqref{eq.defV0}.
				\item Regime $\lambda \in (3^{-2(1+\alpha)m}, 1)$. In the regime $(3^{-2(1+\alpha)m}, 1)$, the effect of regularization is still strong, but it allows us to see the homogenization in a mesoscopic scale roughly around $\lambda^{-\frac{1}{2}}$. In \eqref{eq.GreenKuboBoundMixed}, we choose $3^{n} \simeq \lambda^{- \frac{1}{2(1+\alpha)}}$, and obtain the convergence rate $\lambda^{\frac{\alpha}{2(1+\alpha)}}$. 
				\item Regime $\lambda \in [0, 3^{-2(1+\alpha)m}]$. In this regime, as the regularization is much smaller than the spectral gap of the domain, the error term from the regularization $\lambda^{\frac{1}{2}} 3^{n}$ in \eqref{eq.GreenKuboBoundMixed} is negligible compared to that from the homogenization $3^{-\alpha m}$, and the latter dominates the convergence rate of the Green--Kubo formula.
			\end{itemize}
		\end{proof}

		\bigskip	
		\noindent \textbf{Acknowledgements}
		The research of CG is supported in part by the National Key R\&D Program of China (No. 2023YFA1010400) and NSFC (No. 12301166), and part of this project was developed during his visit at TSIMF and RIMS. We would like to thank Zhen-Qing Chen, Xiaodan Li and Xiangchan Zhu for helpful discussions.

		\bibliographystyle{plain}
		\bibliography{KawasakiRef}
		
	\end{document}